\def\algbackskip{\hskip-\ALG@thistlm}
\newtheorem{theorem}{Theorem}[section]
\newtheorem{definition}[theorem]{Definition}
\newtheorem{proposition}[theorem]{Proposition}
\newtheorem{lemma}[theorem]{Lemma}
\newtheorem{assumption}[theorem]{Assumption}
\newtheorem*{theorem*}{Problem}
\theoremstyle{remark}
\newtheorem{remark}[theorem]{Remark}
\newcommand{\R}{{\mathbb R}}
\newcommand{\E}{{\mathbb E}}
\newcommand{\A}{{\mathcal A}}
\newcommand{\h}{{\mathcal H}}
\newcommand{\g}{{\mathcal L}}
\newcommand{\inprod}[2]{\left\langle #1, #2 \right\rangle}
\newcommand{\nor}[1]{\left\lVert#1\right\rVert}
\newcommand{\modu}[1]{\left\lvert #1 \right\rvert}
\newcommand{\paren}[1]{\left( #1 \right)}
\newcommand{\brac}[1]{\left[ #1 \right]}
\newcommand{\curlbrac}[1]{\left\{ #1 \right\}}
\title{Neural Actor-Critic Methods for Hamilton--Jacobi--Bellman PDEs: Asymptotic Analysis and Numerical Studies}
\date{\today}
\author{Samuel N. Cohen\footnote{Mathematical Institute, University of Oxford. Email: cohens@maths.ox.ac.uk.}, Jackson Hebner\footnote{Corresponding author; Mathematical Institute, University of Oxford. Email: hebner@maths.ox.ac.uk.}, Deqing Jiang\footnote{Mathematical Institute, University of Oxford. Email: jiang\_deqing\_math@pku.edu.cn.}, Justin Sirignano\footnote{Mathematical Institute, University of Oxford. Email: justin.sirignano@maths.ox.ac.uk.}}
\begin{document}

\maketitle

\begin{abstract}
    We mathematically analyze and numerically study an actor-critic machine learning algorithm for solving high-dimensional Hamilton-Jacobi-Bellman (HJB) partial differential equations from stochastic control theory. The architecture of the critic (the estimator for the value function) is structured so that the boundary condition is always perfectly satisfied (rather than being included in the training loss) and utilizes a biased gradient which reduces computational cost. The actor (the estimator for the optimal control) is trained by minimizing the integral of the Hamiltonian over the domain, where the Hamiltonian is estimated using the critic. We show that the training dynamics of the actor and critic neural networks converge in a Sobolev-type space to a certain infinite-dimensional ordinary differential equation (ODE) as the number of hidden units in the actor and critic $\rightarrow \infty$. Further, under a convexity-like assumption on the Hamiltonian, we prove that any fixed point of this limit ODE is a solution of the original stochastic control problem. This provides an important guarantee for the algorithm's performance in light of the fact that finite-width neural networks may only converge to a local minimizers (and not optimal solutions) due to the non-convexity of their loss functions. In our numerical studies, we demonstrate that the algorithm can solve stochastic control problems accurately in up to 200 dimensions. In particular, we construct a series of increasingly complex stochastic control problems with known analytic solutions and study the algorithm's numerical performance on them. These problems range from a linear-quadratic regulator equation to highly challenging equations with non-convex Hamiltonians, allowing us to identify and analyze the strengths and limitations of this neural actor-critic method for solving HJB equations.
\end{abstract}

\section{Introduction}

\subsection{Stochastic optimal control} \label{problem_statement}
Stochastic control theory has at its core the class of Hamilton--Jacobi--Bellman (HJB) partial differential equations (PDEs). These PDEs are often fully nonlinear and difficult to solve, but if a sufficiently regular solution is known, then by the verification theorem, the PDE solution is the true value function of the problem and the minimizer of the Hamiltonian is the true optimal control \cite{pham2009continuous}. These partial differential equations are thus of immense practical interest despite their often pathological nonlinearity.

For high-dimensional stochastic control problems, the relevant HJB equation becomes intractable for traditional numerical PDE techniques such as finite difference schemes. However, deep learning architectures excel at learning high-dimensional functions, and so there has recently been an explosion of interest in using neural networks to solve stochastic control problems. For example, \cite{han2016deep, hure2021deep, jiao2024} solve a time-discretized version of the control problem with machine learning, \cite{beck2019, hure2020deep} develop machine learning methods based on backwards stochastic differential equation formulations, and \cite{al2022extensions, cheridito2025} use machine learning to minimize the squared PDE operator of the HJB equation. Machine learning has also been used for multi-agent control scenarios. For example, \cite{carmona2021deep} summarizes progress in using machine learning for (competitive) mean field games and \cite{hofgard2024, pham2022mean} investigate machine learning for (collaborative) mean field controls. A good summary of the state of the field for these situations is provided in \cite{hu2024}. These techniques have seen great success and are vastly more computationally tractable then naively scaling traditional numeric techniques.

In this paper, we consider a continuous-time, continuous state space, and continuous action space stochastic control problem where both the drift and diffusion coefficients are controlled. The mathematical tools we use in this paper can easily be extended to multi-agent settings and to settings containing jumps; we will focus on a classical control problem for simplicity of presentation. The underlying filtered probability space $\{\Omega_\mathbb{P}, \mathcal{G}, (\mathcal{F}_t)_{t \geq 0}, \mathbb{P}\}$ supports a $d'$-dimensional Brownian motion $W$ and satisfies the usual conditions. The state space $\Omega\subset \R^d$ is open and precompact and the action space is $A \subset \R^{k}$. For a fixed control $u \in \mathcal{U}^\mathcal{F}$, where $\mathcal{U}^\mathcal{F}$ is the space of admissible controls adapted to the filtration $(\mathcal{F}_t)_{t\geq0}$, the corresponding controlled stochastic differential equation is given by\footnote{We may later on omit the superscript $u$ of a controlled process if the identity of the control is clear from context.}
\begin{align}
    dX_t^u = b(X_t^u, u_t)dt + \Phi(X_t^u, u_t)dW_t, \quad X_0^u=x \in \Omega \subset \mathbb{R}^d,
\end{align}
where $b: \Omega \times A \to \mathbb{R}^d$ and $\Phi: \Omega \times A \to \mathbb{R}^{d \times d'}$ are continuous. We define the exit time $$\tau_u = \inf \{t : X_t^u \not\in \Omega \},$$ which then allows the introduction of a cost functional
\begin{align}
    V^u(x) = \E \brac{\int_0^{\tau_u} c(X_s^u, u_s) e^{-\gamma s} ds + g(X_{\tau_u}^u)e^{-\gamma\tau_u} \Big\rvert X_0 = x},
\end{align}
where $c : \Omega \times A \to \R$ and $g: \partial{\Omega} \to \R$ are continuous functions representing the running and terminal costs, respectively, and $\gamma \geq 0$ is a discounting factor.\footnote{The above problem is sometimes called a \textit{static} or \textit{time-homogenous} stochastic control setup on account of the coefficients $b$ and $\Phi$ not varying in time or there being a deterministic time cutoff $T > 0$. However, we can easily incorporate these factors by ``lifting" the diffusion into a $d+1$ dimensional state space $\Omega' = \Omega \times [0,T]$ and changing $b$ and $\Phi$ to account for a deterministic time dimension. For simplicity, we consider only the static problem statement from now on.} The value function is then 
\begin{equation}\label{value}
    V(x) = \inf_{u \in \mathcal{U}^\mathcal{F}} V^u(x). 
\end{equation}
Under mild regularity assumptions, the verification theorem of classical stochastic control theory tells us that any function $V \in C^2(\Omega) \cap C^0(\bar\Omega)$ solving
\begin{align}\label{hjb_full}
    \begin{split}
    \inf_{a \in A} \Big \{ \sum_{i}b_i(x,a)\partial_i V(x)+\frac{1}{2}\sum_{i,j}{\Phi\Phi^\intercal}_{ij}(x,a)\partial^2_{ij} V(x) + c(x,a) \Big\}- \gamma V(x)=0,\,\,\,\, &x  \in \Omega,\\
    V(x)=g(x),\,\,\,\, &x \in \partial\Omega.
    \end{split}
\end{align}
is the value function, as defined in \eqref{value}.

The above PDE \eqref{hjb_full} is the HJB equation of interest. Its properties and formal arguments for its derivation are provided in \cite{krylov1980, pham2009continuous, yong_stochastic_1999}. One may show that if $(\mathcal{F}_t)_{t\geq0}$ is the augmented filtration generated by $W$, then the value function \eqref{value} is the same as when the infimum is restricted to only admissible feedback controls $\mathcal{U} \subset \mathcal{U}^\mathcal{F}$, i.e.~those which are measurable functions of the state variable $X$. For ease of notation, we define for each feedback $u \in \mathcal{U}$ the PDE operator 
\begin{align*}
    \mathcal{L}^u V(x) :&= \sum_{i}b_i(x,u(x))\partial_i V(x)+\frac{1}{2}\sum_{i,j}{\Phi\Phi^\intercal}_{ij}(x,u(x))\partial^2_{ij} V(x) + c(x,u(x)) - \gamma V(x) \\
    &= b(x,u(x))\cdot \nabla V(x) + \frac{1}{2}\textrm{Tr}\paren{\Phi\Phi^\intercal(x,u(x)) \textrm{Hess}V(x)} + c(x,u(x)) - \gamma V(x).
\end{align*}
It is then easy to see that the PDE can be represented as
\begin{equation}\label{HJB_with_operator}
    \inf_{u \in \mathcal{U}}\mathcal{L}^u V(x) = 0 \text{ for $x\in \Omega$ \quad and \quad } V(x) = g(x) \text{ for $x \in \partial\Omega$}.
\end{equation}

\subsection{Challenges for standard computational methods}\label{difficulties}
As already mentioned, traditional numerical PDE techniques such as finite difference schemes are computationally intractable in even moderately high dimensions. One might thus try to solve \eqref{hjb_full} using standard deep learning-based PDE solving techniques such as the Deep Galerkin Method \cite{sirignano2018dgm} or Physics Informed Neural Networks \cite{raissi2019physics}. The corresponding optimization problem then becomes that of finding the parameters $\phi$ of a neural network $f_\phi$ that minimize the loss
\begin{align*}
    J(\phi) = \nor{\inf_{a \in A} \curlbrac{\sum_{i}b_i(x,a)\partial_i f_\phi(x)+\frac{1}{2}\sum_{i,j}{\Phi\Phi^\intercal}_{ij}(x,a)\partial^2_{ij} f_\phi(x) + c(x,a)} - \gamma f_\phi(x)}_{\mu(\Omega)}^2 + \nor{f_\phi-g}_{\nu(\partial \Omega)}^2,
\end{align*}
where $\mu$ and $\nu$ are chosen measures (usually Lebesgue) on the interior and boundary of the domain $\Omega$ and the norms are the corresponding $L^2$ ones. However, this formulation encounters several obstacles. Chief among these is that directly performing this optimization task by (deterministic) gradient descent requires computing the gradient term
\begin{equation*}
    \nabla_\phi \inf_{a \in A} \curlbrac{\sum_{i}b_i(x,a)\partial_i f_\phi(x)+\frac{1}{2}\sum_{i,j}{\Phi\Phi^\intercal}_{ij}(x,a)\partial^2_{ij} f_\phi(x) + c(x,a)}.
\end{equation*}
This infimum may not have derivatives in $\phi$, and even if it does, standard autodifferentiation/back-propagation cannot find them.\footnote{Formally, as taking the infimum over an uncountable set is not an operation that can be easily computed, let alone be stored in an autodifferentiator's computational graph,  we cannot take gradients through this step of evaluating the PDE operator.} Further, even if we do somehow learn the value function up to some satisfactorily small loss, we have not yet extracted from it an optimal or near-optimal control.

Note, however, that for a fixed feedback control $u\in \mathcal{U}$, the value function $V^u$ of the corresponding controlled process $X_t^u$ satisfies, by the Feynmac--Kac theorem, the following linear PDE
\begin{align}
\begin{split} \label{FC_linear_hjb}
    \sum_{i}b_i(x,u(x))\partial_i V^u(x)+\frac{1}{2}\sum_{i,j}{\Phi\Phi^\intercal}_{ij}(x,u(x))\partial^2_{ij} V^u(x) + c(x,u(x)) - \gamma V^u(x)=0,\,\,\,\, &x  \in \Omega,\\
    V^u(x)=g(x),\,\,\,\, &x \in \partial\Omega.
\end{split}
\end{align}
This potentially high-dimensional linear PDE is solvable with a cornucopia of deep learning methods such as DGM \cite{sirignano2018dgm, jiang2023global}, PINNs \cite{raissi2019physics}, Q-PDE \cite{JMLR:v24:22-1075}, and deep BSDE methods \cite{han2018solving}, among others. These techniques have the advantages of computational tractability and often theoretical guarantees of convergence.

On the other hand, by the verification theorem, we know an optimal control $u^*$ of a stochastic control problem achieves the infimum of the Hamiltonian at each $x \in \Omega$ for the true value function $V$, i.e.
\begin{equation}
    H(u^*, V)(x) = \inf_{u \in \mathcal{U}} \curlbrac{\sum_{i}b_i(x,u(x))\partial_i V(x)+\frac{1}{2}\sum_{i,j}{\Phi\Phi^\intercal}_{ij}(x,u(x))\partial^2_{ij} V(x) + c(x,u(x))}.
\end{equation}
Thus, if we have a neural network $Q_\phi$ that accurately estimates the value function $V$, we can extract a near-optimal control from it by training a neural network $U_\theta$ to minimize the (estimated) Hamiltonian
\begin{align}\label{hamiltonian}
    H(U_\theta, Q_\phi)(x) = \sum_{i}b_i(x,U_\theta(x))\partial_i Q_\phi(x)+\frac{1}{2}\sum_{i,j}{\Phi\Phi^\intercal}_{ij}(x,U_\theta(x))\partial^2_{ij} Q_\phi + c(x,U_\theta(x)).
\end{align}
Note that throughout this paper we will consistently use $Q_\phi$ and $U_\theta$ to denote neural networks tasked with learning HJB solutions and controls, respectively. Here, the subscripts $\phi$ and $\theta$ represent the neural network parameters. 

Of course, we can iterate the above two steps:
\begin{itemize}
    \item Given a candidate near-optimal feedback control $U_\theta \in\mathcal{U}$, train the neural network $Q_\phi$ to approximate $V^{U_\theta}$ by solving \eqref{FC_linear_hjb}.
    \item Given a candidate value function $Q_\phi$, train the neural network $U_\theta$ to approximate the corresponding optimal feedback control $u\in\mathcal{U}$ by minimizing \eqref{hamiltonian}.
\end{itemize}
One then hopes that the candidate value function and candidate control converge to their true counterparts as the number of iterations $\rightarrow \infty$. This inspires the idea of actor-critic algorithms; the algorithm we consider in this paper is of this class. Here, the ``actor" is the neural network $U_\theta$ trained to learn the optimal control $u^*$ and the ``critic" is the neural network $Q_\phi$ trained to learn the value function $V$.

There are several options for training an actor $U_\theta$ to minimize \eqref{hamiltonian} in some sense. For example, we could use gradient descent to approximate the parameters $\theta$ that achieve
\begin{align*}
    \min_\theta \int_\Omega H(U_\theta, Q_\phi)(x)d\mu(x).
\end{align*}
This is the most common approach and the one we will analyze in this paper. However, there are several other techniques. For a summary of the literature on actor-critic algorithms in stochastic control theory, see Section \ref{sec:review}.

We briefly remark that it is not strictly necessary in an actor-critic algorithm to represent both the actor and critic with neural networks -- even for continuous action spaces $A$. For example, the recent paper \cite{ito2021neural} proposes and proves the superlinear convergence of an actor-critic algorithm for solving the Hamilton--Jacobi--Bellman--Isaacs problem in which the actor minimizing the appropriate analogue of \eqref{hamiltonian} is defined theoretically as the pointwise minimizer and is calculated individually each time an $x \in \Omega$ is sampled. However, this approach is computationally expensive and not tractable in high dimensions.

\subsection{Features of this paper}

We present, mathematically analyze, and numerically study a specific actor-critic deep learning algorithm for solving the HJB equation \eqref{hjb_full} when the state space and/or action space are high-dimensional. In particular, we make use of the Q-PDE algorithm from \cite{JMLR:v24:22-1075} to train the critic (i.e.~solve \eqref{FC_linear_hjb}) and apply a form of gradient descent on the integral of the Hamiltonian over $\Omega$ to train the actor (i.e.~minimize \eqref{hamiltonian}). The actor-critic algorithm we define has several advantages:
\begin{itemize}
    \item Following \cite{mcfall2009artificial}, the critic neural network is structured so that the boundary condition $V\rvert_{\partial\Omega} = g$ is automatically satisfied. This means that there is no need to add a separate loss term for the boundary, simplifying training and analysis.
    \item The algorithm works without ever needing to sample controlled SDE paths or calculate any stochastic integrals. Instead, the user has full freedom to determine where learning is prioritized by selecting a fixed measure of their choice to define the actor and critic loss functionals. This avoids the concern that regions of the state space may not be explored frequently enough and hence not be learned.
\end{itemize}
We leverage these properties to rigorously prove that for actor and critic neural networks initialized in the neural tangent kernel (NTK) regime, the training dynamics of the actor and critic outputs converge to the solution of an infinite-dimensional ordinary differential equation \eqref{limit_ODE} as their hidden layer widths $N, N^* \to \infty$ (cf.~Theorem \ref{thm_limit_ode}). If the Hamiltonian of the HJB equation satisfies a convexity-like assumption in the sense that its global minimum in the control is determined by a first-order condition, we can further prove that any fixed point of the limit ODE is the stochastic control problem's true value function paired with an optimal feedback control (cf.~Theorem \ref{fixed_point_verification}). The convergence analysis of finite-width neural networks is challenging because they have complicated dynamics and highly non-convex loss functions that introduce local minima not inherent to the problem. The infinite-dimensional limit ODE \eqref{limit_ODE}, on the other hand, has a simple form that allows for tractable mathematical analysis.

On the numeric side, our contributions are:
\begin{itemize}
    \item We demonstrate that our algorithm accurately can solve fully nonlinear HJB equations in extremely high dimensions. For example, we solve in 200 dimensions a form of the linear-quadratic regulator problem where the diffusion term depends on both the state and control.
    \item We develop a general framework for writing arbitrarily difficult HJB equations with known closed-form solutions and use it to construct problems to benchmark our actor-critic method. These problems range in difficulty from being only slightly harder than the linear-quadratic regulator to being much more difficult than those usually considered in the literature.
    \item We show that the algorithm performs very well for a wide range of stochastic control problems. However, problems that violate the convexity-like assumption on the Hamiltonian may have non-optimal fixed points towards which the actor-critic training flow might converge. We demonstrate this phenomenon happening in one example (cf.~Problem 2A) and failing to happen in another, slightly modified version where the Hamiltonian was made to satisfy the assumption (cf.~Problem 2B). Thus, Theorem \ref{fixed_point_verification} -- and the assumptions it requires -- have practical significance.
    \item We provide an optional modification to the algorithm to increase numerical stability in cases where the controlled SDE is extremely sensitive to the choice of actor (cf.~Problem 3).
    \item We demonstrate our algorithm learning controls in variants of the Merton problem where restraints on the control space or modifications to the wealth dynamics prevent the existence of a general closed-form solution, with Monte Carlo simulations as our benchmark.
\end{itemize}

\subsection{Neural Q-Learning for PDEs}

Throughout this paper, we frequently make references to the Q-PDE algorithm from \cite{JMLR:v24:22-1075}. The intuition behind the algorithm is to simplify parameter update gradients by reducing dependence on the PDE operator in a way that is analogous to the deep Q-learning algorithm from reinforcement learning.\footnote{See \cite[p.~12]{JMLR:v24:22-1075} for a detailed discussion on why this analogy makes sense.} More specifically, like with the (constrained) Deep Galerkin Method, we are interested in finding the parameters $\phi$ of a neural network $f_\phi$ that minimize the $L^2(\mu)$ error of the PDE operator for some chosen measure $\mu$, i.e. we want to solve $$\min_\phi \int_\Omega[\g f_\phi(x)]^2 d\mu(x).$$ Instead of using the gradient
\begin{equation}
    \frac{d\phi}{dt} = -2\int_\Omega \g f_\phi(x) \nabla_\phi \g f_\phi(x) d\mu(x),
\end{equation}
we may use the biased gradient
\begin{equation}\label{qpde}
    \frac{d\phi}{dt} = -2\int_\Omega \g f_\phi(x) \nabla_\phi (- f_\phi(x)) d\mu(x).
\end{equation}
This especially makes sense if $\g : \h^2 \to L^2$ is strongly negatively monotone, i.e. there exists a $k > 0$ such that $$\inprod{f_1-f_2}{\g f_1 - \g f_2}_{L^2} \leq -k \nor{f_1 - f_2}_{L^2}^2$$ for all $f_1, f_2 \in \h^2,$ although everything is of course still well defined even if this is not the case. Under some regularity assumptions, using gradient clipping and hard constraints, and with the assumption that $\g : \h^2 \to L^2$ is strongly negatively monotone, \cite{JMLR:v24:22-1075} proves that the output of $f_\phi$ trained with the Q-PDE algorithm in the infinite-width limit is given by a deterministic infinite-dimensional ODE flow, and this flow converges ergodically to the solution of the PDE. We will build upon the Q-PDE algorithm for the neural actor-critic algorithm studied in this paper.

\section{Review of Actor-Critic Methods for the HJB Equation}\label{sec:review}

Actor-critic methods are a class of algorithms that emerged from reinforcement learning \cite{konda1999actor, sutton2018reinforcement} and are based on the classical policy iteration method in stochastic control \cite{howard1960}. These algorithms perform two tasks iteratively: i) Given a control $u$, approximate its value function $V^u$; ii) Given an approximation of $V^u$, update the control $u \to u'$ so that $u'$ outperforms $u$ or is in some way closer to an optimal control $u^*$. The term ``actor" refers to the control $u$ (and/or the neural network representing it) and the term ``critic" refers value function estimate $Q \approx V^u$ of the current control (and/or the neural network representing it). For the rest of this paper, we may use ``actor" and ``control" interchangeably, and similarly for ``critic" and ``value function".
     
The discussion in Section \ref{difficulties} means that problems in continuous-time stochastic control naturally lend themselves to neural actor-critic methods, and so several papers proposing these types of approaches have already been suggested (and this is not to count the very many papers in discrete-time stochastic control). In particular, we briefly outline the algorithms proposed in \cite{zhou2021actor, zhou2022, al2022extensions, cheridito2025} which (to the authors' knowledge) are the closest analogues to our approach.

\subsection{Variance Reduced Least-Squares Temporal Difference}

In \cite{zhou2021actor}, the authors propose an extension of the Least Squares Temporal Difference (LSTD) method from discrete-time reinforcement learning to a continuous-time stochastic control setting. Namely, given a fixed ``look-forward" time $T > 0$ and a control $u \in \mathcal{U}$, they define the temporal difference functional
\begin{align*}
    \mathrm{TD}^u(Q_\phi) &= \int_0^{T\wedge \tau} e^{-\gamma s}c(X_s,u(X_s))ds - \int_0^{T\wedge\tau} e^{-\gamma s}\nabla Q_\phi(X_s)^\intercal\Phi(X_s, u(X_s))dW_s \\
    &\quad+ e^{-\gamma (T\wedge\tau)}Q_\phi(X_{T\wedge\tau}) - Q_\phi(X_0).
\end{align*}
One may show that if $Q_\phi = V^u$, i.e. $Q_\phi$ is the true critic for a fixed actor $u$, then $\mathrm{TD}^u(Q_\phi) = 0$ almost surely. The authors thus initialize a critic $Q_\phi$ and an actor $U_\theta$, and train $Q_\phi$ to approximate $V^{U_\theta}$ by minimizing the sum of temporal difference $L^2$ loss and the boundary value $L^2$ loss, i.e.
\begin{equation*}
    \mathrm{minimize} \quad \E_{X_0 \sim \mu} \nor{\mathrm{TD}^{U_\theta}(Q_\phi)}^2 + \eta\E_{X\sim\mathrm{Unif}(\partial\Omega)}(Q_\phi(X) - g(X))^2,
\end{equation*}
where $\mu$ is the initial distribution and $\eta > 0$ is a constant that encourages $Q_\phi$ to learn the boundary value.\footnote{Technically, the authors introduce two neural networks: one to learn $V^{U_{\theta}}$ and the other to learn $\nabla V^{U_\theta}$. This is principally a computational trick to ensure stability and plays no significant role in the theoretical underpinning of the algorithm.} The temporal difference loss is calculated from a batch of Monte Carlo samples of the SDE controlled by the current actor $U_\theta$ and the boundary value loss is calculated by Monte Carlo integration.

For the actor training, the authors aim to directly minimize the cost functional $$J^{U_\theta}(Q_\phi) = \E_{X_0 \sim \mu, u}\brac{\int_0^{T\wedge\tau}c(X_s,U_\theta(X_s))e^{-\gamma s}ds + Q_{\phi}(X_{T\wedge\tau})e^{-\gamma (T\wedge\tau)}}.$$ By using It\^{o}'s lemma and excluding difficult to calculate terms such as the functional derivatives $\delta V^{U_\theta}(Q_\phi)/\delta U_\theta$ and $\delta \tau/\delta U_\theta$, the authors derive an expression for a biased functional gradient approximating $\delta J^{U_\theta}(Q_\phi)/\delta\theta.$ Then, similar to the critic training, they simulate a batch of controlled SDE paths via Monte Carlo sampling and take a training step to minimize the cost functional $J^{U_\theta}(Q_\phi)$. The actor and critic training then iterate.

The authors implement the algorithm for four PDEs in up to 20 dimensions. The performance is acceptable, but suffers from the very high computational cost of numerically computing stochastic integrals in high dimensions. There are some other difficulties introduced by the simulated path approach, too, such as the need to estimate the exit time $\tau$ and exit point on $\partial\Omega$ of a path known at only finitely many points.

\subsection{Learning a regularized control}

In \cite{zhou2022}, the authors present a unified theory of continuous-time reinforcement learning. In particular, their form of reinforcement learning relies on relaxed controls, where instead of searching for a single optimal control function $u: [0,T] \times \Omega \to A$, they seek a function $\pi: [0,T] \times \Omega \to \mathcal{P}(A)$ that outputs a probability distribution over the set of possible actions. They then expand the probability space and filtration the stochastic control problem is supported on so that $(\pi(t,X_t^\pi))_{0\leq t \leq T}$ is conditionally independent of $\mathcal{F}^{X_t^\pi}$. This approach allows for adding a regularizer term $p : \mathcal{P}(A) \to \R$ such as differential entropy to the cost functional of a given policy, giving $$J^\pi(t,x) = \E\brac{\int_t^T e^{-\gamma(s-t)} \brac{c(t,X^\pi_s,a^\pi_s) + p(\pi(\cdot \mid s,X^\pi_s))}ds + e^{-\gamma(T-t)} g(X^\pi_T) \bigg| X_t^\pi = x},$$
where $\E$ represents the expectation with respect to a probability space supporting both the Brownian motion $W$ driving the diffusion and also the random actions at each time $t$ sampled from $\pi(t,X_t^\pi) \in \mathcal{P}(A).$ Numerous regularity assumptions are imposed on the full setup, including that the relaxed policies $\pi$ optimized over must given by a finite set of parameters $\theta$ and must be absolutely continuous with respect to the Lebesgue measure on $A$. The regularization has several practical benefits, such as increased stability of the optimal relaxed policy with respect to the control problem parameters and forcing the control to ``explore" the action space in some sense. For a rigorous derivation and explanation of the former benefit in the slightly different case of a finite action space $A$, see \cite{zhang2021}.

The particular contribution of \cite{zhou2022} is to derive an analog of the classical policy gradient lemma, i.e.~a closed-form expression of the gradient $\nabla_\theta J(t,x ; \pi_\theta)$ of the value functional for each space-time point, and to use this in conjunction with martingale optimality and orthogonality conditions for the value function to write two actor-critic algorithms. In particular, their offline actor-critic algorithm (Algorithm 1 in the paper) could be used to solve stochastic control problems similar to those we consider in this paper. It works by simulating controlled SDE paths to calculate estimates of the relaxed policy gradient and the inner product of the value process against a test function, taking steps in the direction of the former and separate gradient descent steps to minimize the square of the latter.

However, this approach has several drawbacks. Most notably, the parametrization of $\pi(\cdot, \cdot; \theta)$ must be determined a priori, and there is not always a clear or canonical choice. Further, the theory behind the critic training relies on enforcing a certain martingale orthogonality condition against \textit{all} test functions in a very large class of stochastic processes, but practically we must select only one or a few, and there is no obvious best choice.

\subsection{Deep Galerkin Method extended to HJB equation}

In \cite{al2022extensions}, the authors propose an actor-critic extension of the Deep Galerkin Method for finite time horizon HJB equations which they call DGM-PIA (Deep Galerkin Method -- Policy Improvement Algorithm). The algorithm is based on the linearization trick described in Section \ref{difficulties}, with DGM used to train the critic and minimizing the integral of the Hamiltonian to train the actor. That is to say, the authors initialize neural networks $Q_\phi(t,x)$ and $U_\theta(t,x)$ and alternate between solving the linear critic PDE for a fixed actor $U_\theta$
\begin{align*}
    \sum_{i}b_{x_i}(t,x,U_\theta(t,x))\partial_{x_i} Q_\phi(t,x)+\frac{1}{2}\sum_{i,j}{\Phi\Phi^\intercal}_{x_i x_j}(t,x,U_\theta(t,x))\partial^2_{x_ix_j} Q_\phi(t,x) + c(t,x,U_\theta(t,x)) + \partial_t Q_\phi(t,x)&=0,\\
    Q_\phi(T,x)&=g(x)
\end{align*}
by minimizing over $\phi$ the critic loss
\begin{equation*}
    L_\textrm{critic}(\phi; \theta) = \nor{\g^{U_\theta(\cdot,\cdot)}Q_\phi(\cdot,\cdot)}_{\nu_1([0,T]\times\Omega)}^2 + \nor{Q_\phi(T,\cdot) - g}_{\nu_2(\Omega)}^2
\end{equation*}
and then solving for an optimal feedback control given a fixed critic $Q_\phi$ by minimizing over $\theta$ the actor loss
\begin{equation*}
    L_{\mathrm{actor}}(\theta;\phi) = \int_{[0,T]\times\Omega}\brac{\g^{U_\theta(\cdot, \cdot)}Q_\phi(\cdot, \cdot) - \partial_tQ_\phi(\cdot,\cdot)} d\nu_1.
\end{equation*}

The authors employ this algorithm for a few stochastic control examples including the Merton problem, an optimal execution problem, and an LQR problem. They also extend the algorithm to systems of HJB equations and mean-field games. However, none of their examples are high-dimensional, and more importantly they do not provide any theoretical analysis of the training dynamics or convergence of the algorithm. Note also that the critic neural network $Q_\phi$ is not necessarily structured so as to satisfy the boundary condition $Q_\phi(T, \cdot) = g$ automatically, but instead is encouraged to learn it via an $L^2$ loss.

\subsection{Deep learning for stochastic control with jumps}

In \cite{cheridito2025}, the authors introduce two numerical schemes for solving stochastic control problems with controlled jumps in high dimensions. The first, called GPI--PINN 1, is a natural extension of the DGM--PIA algorithm introduced in \cite{al2022extensions}. The structure is the same, but they add a jump-related term in the PDE operator 
\begin{align*}
    \g^u V(x,t) = -\partial_t V(t,x) + c(t,x,u(t,x)) + b(t,x,u(t,x)) &\cdot \nabla_xV(t,x) + \frac{1}{2}\mathrm{Tr}\brac{\Phi\Phi^T(t,x,u(t,x)) \mathrm{Hess}_x V(t,x)} \\ &+ \lambda(t,x,a)\E^\mathcal{Z}\brac{V(t,x+\gamma(t,x,Z,u(t,x))) - V(t,x)},
\end{align*}
and similarly for the Hamiltonian.
Here, jumps follow a controlled Poisson structure, with $\lambda$ being the intensity and $\mathcal{Z}$ being the unchanging distribution of the random jumps $Z$. In fact, if the jump intensity is uniformly zero, DGM--PIA and GPI--PINN 1 coincide. The presence of the jump-related term in the PDE operator and Hamiltonian is motivated by the verification theorem for controlled jump processes.

However, the combination of needing third derivatives of the critic when evaluating $\nabla_\phi \g^{U_\theta} Q_\phi(t,x)$ and computing an expectation $\E^\mathcal{Z}$ at each sampled space-time point when training with DGM is expensive. To get around this, the authors provide GPI--PINN 2, which is similar to GPI--PINN 1 except that each $$\E^\mathcal{Z}\brac{Q_\phi(t_n,x_n+\gamma(t_n,x_n,Z,U_\theta(t_n,x_n))) - Q_\phi(t_n,x_n)}$$ is estimated by sampling a single $Z = z_n \sim \mathcal{Z}$ and that the critic update follows a non-constrained Q-PDE-like \cite{JMLR:v24:22-1075} biased gradient
$$\phi_{k+1} - \phi_{k} = \eta_1 \int_{\Omega\times [0,T]} \g^{U_\theta} Q_\phi(t,x) \nabla_\phi Q_\phi(t,x) d\mu(x)dt - \eta_2 \int_{\partial \Omega} \paren{Q_\phi(T,x) - g(x)}\nabla_\phi Q_\phi(T, x) d\nu(x).$$
This is generally more effective and computationally efficient than GPI--PINN 1. While developed independently, it is similar to the method we will introduce in this paper, with the two main differences being that GPI--PINN 2 allows for jumps and uses a soft rather than hard constraint to force the boundary values to match. The latter difference, we believe, is surprisingly fundamental -- with the use of a hard constraint allowing for more accurate results and greater analytic insight. Thus, the results in this paper serve as a proof of the wide-network convergence of the training dynamics of GPI--PINN 2 with this modification.

\subsection{Iterative diffusion optimization}

We remark here that \cite{zhou2021actor, zhou2022} rely on simulating controlled SDE paths to estimate a loss functional. This places them in the class of \textit{iterative diffusion optimization} algorithms, of which a systematic theoretical analysis is provided in \cite{nusken2022}. Our algorithm is not an iterative diffusion optimization algorithm, which gives it more flexibility and ensures that the actor and critic are encouraged to learn equally well throughout the whole domain $\Omega$.

\subsection{Results for theoretical gradient flows}

Particular actor-critic gradient flows are analyzed in \cite{feng2025,zhou2025}. We mention these works only briefly because while they do prove convergence theorems for the gradient flows they consider, they do not provide machine learning/neural network algorithms which could be used to computationally implement the flows. Our paper specifically studies actor-critic algorithms implemented with neural networks and proves the convergence of one such algorithm to a limiting flow upon which further analysis can then be developed. To the best of our knowledge, we are the first to derive the exact limiting dynamics of a neural actor-critic algorithm for solving stochastic control problems. In addition, the mathematical approach we take can be easily extended to algorithms which solve HJB equations more general than \eqref{hjb_full}, e.g.~those with jumps or multiple agents.

\section{Our Algorithm and its Analysis}

We now introduce our algorithm. Recall that we are interested in the following stochastic control problem: there exists a controlled process $X_t$ defined on $\{\Omega_\mathbb{P}, \mathcal{G}, (\mathcal{F}_t)_{t \geq 0}, \mathbb{P}\}$ with state space $\Omega\subset \R^d$, action space $A \subset \R^k$, and drift and volatility coefficients $b: \Omega \times A \to \mathbb{R}^d$ and $\Phi: \Omega \times A \to \mathbb{R}^{d \times d'}$ such that
\begin{align*}
    dX_t = b(X_t, u_t)dt + \Phi(X_t, u_t)dW_t, \quad X_0=x \in \Omega \subset \mathbb{R}^d,
\end{align*}
and the corresponding HJB equation is
\begin{align*}
    \inf_{a \in A} \curlbrac{ \sum_{i}b_i(x,a)\partial_i V(x)+\frac{1}{2}\sum_{i,j}{\Phi\Phi^\intercal}_{ij}(x,a)\partial^2_{ij} V(x) + c(x,a)}- \gamma V(x)=0,\,\,\,\, &x  \in \Omega,\\
    V(x)=g(x),\,\,\,\, &x \in \partial\Omega.
\end{align*}
We denote the linearized PDE operator for any admissible feedback control $u \in \mathcal{U}$ by $\g^u$, i.e. for $f \in \mathcal{D}(\g^u)$ we have
\begin{align*}
        \mathcal{L}^u f(x):=\sum_{i}b_i(x,u(x))\partial_i f(x)+\frac{1}{2}\sum_{i,j}{\Phi\Phi^\intercal}_{ij}(x,u(x))\partial^2_{ij} f(x) - \gamma f(x) + c(x,u(x)).
\end{align*}

In addition to the conditions given in Section \ref{problem_statement}, we add the following assumptions to our problem. These are helpful in the mathematical analysis of the actor and critic networks' training dynamics. The last two assumptions in particular allow for the critic network to be hard-constrained (i.e.~constructed in a way that always satisfies the boundary condition), thus simplifying training and allowing for a neural tangent kernel-style analysis.
\begin{assumption}\label{assumption_bounded_second_derivative}
    The functions $b_i, \Phi \Phi^\intercal_{ij}, c$ are in $C_b^{0,2}(\Omega \times A; \R)$ for all indices $i,j$.
\end{assumption}
\begin{assumption}[Auxiliary function $\eta$]\label{assumption_eta}
    There exists a known, explicit auxiliary function $\eta \in C^3(\R^n)$ such that $\eta(x) > 0$ for $x\in\Omega$ and $\eta(x) = 0$ for $x \in \partial\Omega$.
\end{assumption}
\begin{assumption}[Interpolation $\overline{g}$ of boundary condition]\label{assumption_g}
    There exists a known, explicit interpolation function $\overline{g} \in C^2_b(\overline\Omega)$ of the boundary condition $g \in C^0(\partial \Omega)$ such that $\overline{g}\rvert_{\partial\Omega} = g$.
\end{assumption}
\begin{remark}
    In \cite{JMLR:v24:22-1075}, additional assumptions are made that the boundary $\partial\Omega$ is $C^{3,\alpha}$ for some $\alpha \in (0,1)$ and $\nabla \eta(x) \neq 0$ for $x \in \partial \Omega$. These are used in \cite[Theorem 33]{JMLR:v24:22-1075} to prove that the image of the kernel $\mathcal{B}$, which is defined in \eqref{kernel_B_op_def} and plays a role in the infinite-dimensional limit ODE flow \eqref{limit_ODE}, is dense in $\h^2_{(0)}$\footnote{We define $\h^2_{(0)} \subset \h^2$ as the space of Sobolev functions that have a boundary trace of 0.}. Such assumptions are not required for the results in this paper because we are concerned with the convergence of the training dynamics of our algorithm to the limit flow \eqref{limit_ODE} containing $\mathcal{B}$ (cf.~Theorem \ref{thm_limit_ode}) rather than the convergence of \eqref{limit_ODE} itself to a solution $V \in \h_{(0)}^2 + \overline{g}$ of the PDE \eqref{hjb_full} (which would be the analogue of \cite[Theorem 47]{JMLR:v24:22-1075}).
\end{remark}
\subsection{The algorithm}
We define two neural networks. The actor, tasked with modeling the (feedback) control, is a fully-connected one-hidden-layer feedforward neural network $U_\theta^N: \overline\Omega \to \mathbb{R}$ given by\footnote{To ease notation, from now on we will without any serious loss of generality consider $A = \R$. All subsequent mathematics can be done in a higher-dimensional Euclidean space $\R^k$ by changing the notation in the obvious ways. If $A$ is not $\R^k$, we can apply a function that maps $\R^k$ to $A$ in all subsequent places where the control appears. This may occasionally cause problems such as losing the ability to take actions in the boundary of $A$, but is generally numerically harmless due to the high regularity of the problems we are considering.}
\begin{align} \label{actor_network}
    U^N_\theta(x):=\frac{1}{N^\beta}\sum_{i=1}^N h^i \sigma(v^i \cdot x+z^i),
\end{align}
where $\beta \in \paren{\frac{1}{2}, 1}$ is a scaling parameter. Here, we represent the parameters $\{h^i,v^i,z^i\}_{i=1}^N$ by a vector $\theta$. The critic network, on the other hand, is $Q_\phi^{N^*}: \overline{\Omega} \to \mathbb{R}$ given by
\begin{equation}\label{q_def}
    Q^{N^*}_\phi(x):= Z^{N^*}_\phi(x) \eta(x) + \overline{g}(x),
\end{equation}
where $Z^{N^*}_\phi$ is a separate neural network
\begin{align} \label{critic_network}
    Z_\phi^{N^*}(x):=\frac{1}{{N^*}^{\beta}} \sum_{i=1}^{N^*} c^i \sigma(w^i \cdot x + b^i).
\end{align}
Similarly, we represent critic parameters $\{c^i,w^i,b^i\}_{i=1}^{N^*}$ by a vector $\phi$. Recall that $\eta(x) \in (0,C]$ for $x\in\Omega$ and $\eta(x) = 0$ for $x \in \partial\Omega$. The structure of \eqref{critic_network} therefore forces the critic network $Q_\phi^{N^*}$ to perfectly match the known boundary condition while still giving it the freedom to learn any value in the interior of $\Omega$.

For simplicity, we assume that the actor and critic networks use the same activation function $\sigma$ and are initialized from a common parameter distribution satisfying the following conditions.
\begin{assumption} \label{assumption_neural_net}
The common activation function $\sigma\in C_b^4(\mathbb{R})$ is non-constant.
There exists a universal constant $C > 0$ such that the initialization of the parameters $\theta$, $\phi$ satisfies:
\begin{itemize}
    \item The parameter sets $\{(c_0^i$, $w_0^i$, $b_0^i)$, $(h_0^j$, $v_0^j$, $z_0^j)\}_{i\in\{1,\dots,N^*\}, j\in \{1,\dots,N\}}$ are all independent and identically distributed.
    \item The parameters $c_0^i,w_0^i,b_0^i$ are independent.
    \item The distribution of $c_0^i$ satisfies $\modu{c^i_0} \leq C$ and $\mathbb{E}\brac{c^i_0}=0$.
    \item We have the moment bounds $\mathbb{E}\brac{|b_0^i|} \leq C$ and $\mathbb{E}\brac{\modu{w_0^{i,j}}^3} \leq C$ for $j \in \{1,\dots,d\}$.
\end{itemize}
\end{assumption}

To help with the theoretical analysis of the algorithm as $N,N^* \to \infty$, we employ smooth gradient clipping. In particular, we use a family of truncation functions of the following type (adapted from \cite{JMLR:v24:22-1075}).
\begin{definition}\label{definition_smooth_truncation}
    We call a set of functions $\{\psi^N\}_{N \geq 1} : \R \to \R$ a family of smooth truncation functions with parameter $\delta \in \paren{0, \frac{1-\beta}{4}}$ if for each $N \geq 1$
    \begin{itemize}
        \item $\psi^N \in C^2_b(\R)$ is increasing
        \item $\modu{\psi^N}$ is bounded by $2N^\delta$
        \item $\psi^N(x) = x$ for $x \in [-N^\delta, N^\delta]$
        \item $\modu{(\psi^N)'}$ is bounded by 1
        \item $F^N := (\psi^N)\cdot(\psi^N)'$ is Lipschitz with uniform constant for all $N \geq 1$
    \end{itemize}
\end{definition}

\begin{remark}
    For concreteness, we mention the following family of smooth truncation functions. Let $\delta = \frac{1-\beta}{5}$ and $$\psi^N(x) = \int_0^x \xi^N(y)dy, \quad \text{where} \quad \xi^N(x) = \begin{cases}
        e^{-(\modu{x}-N^\delta)^2} \hspace{0.5em} &\text{if } \modu{x} \geq N^\delta, \\ 1 \hspace{0.5em} &\text{if } \modu{x} < N^\delta.
    \end{cases}$$
\end{remark}

\begin{remark}
    It is straightforward to see that a family of smooth truncation functions with parameter $\delta > 0$ obeys the following identities:
    \begin{align}
        \modu{\psi^N(x) - x} &\leq \modu{x} \mathbf{1}_{\curlbrac{x \geq N^\delta}}, \\
        \modu{F^N(x)-x} &\leq 2\modu{x}\mathbf{1}_{\curlbrac{x \geq N^\delta}}.
    \end{align}
\end{remark}

The use of smooth gradient clipping in the setup below allows us to derive the training dynamics of our algorithm in the infinite-width limit where $N,N^* \to \infty$. We also note that gradient clipping is known to typically improve training performance \cite{Zhang2020Why}. However, empirically, the algorithm also works well even if each $\psi^N$ is just the identity map.

Each training iteration is over a length of training time $\Delta t$, during which both the actor and critic update. First, given the current actor $U_{\theta_t}^N$, the critic $Q_{\phi_t}^{N^*}$ takes a step towards learning the corresponding value function $V^{U_{\theta_t}^N}$ by reducing the linearized PDE operator loss
\begin{align} \label{critic_loss}
    L_\mathrm{critic}(\phi_{t}; \theta_t) &= \int_\Omega \modu{\mathcal{L}^{U_{\theta_t}^N} Q_{\phi_t}^{N^*}(x)}^2 d\mu(x)
\end{align}
via updating $\phi_t \leftarrow \phi_{t+\Delta t}$ according to a clipped Q-PDE gradient, where $\mu$ is a chosen measure absolutely continuous with respect to the Lebesgue measure. After this, the actor $U_{\theta_{t}}^N$ takes a step towards learning the best control corresponding with the critic $Q_{\phi_{t+\Delta t}}^{N^*}$ by reducing the actor loss
\begin{align}\label{actor_loss} \begin{split}
    L_\mathrm{actor}(\theta_{t}; \phi_{t+\Delta t}) &= \int_\Omega H\paren{U_{\theta_{t}}^N, Q_{\phi_{t+\Delta t}}^{N^*}}(x)d\mu(x), 
    \end{split}
\end{align}
via updating $\theta_t \leftarrow \theta_{t+\Delta t}$ according to a clipped gradient, where $H\paren{U_{\theta_{t}}^N, Q_{\phi_{t+\Delta t}}^{N^*}}$ is the pointwise estimated Hamiltonian. The process then repeats until some terminal condition (such as a maximum time $\Lambda$) is achieved. This is an approximation of the continuous-time gradient flow where the actor and critic update simultaneously.

Note that calculating the gradient in $\theta$ of the actor loss in \eqref{actor_loss} gives
\begin{align}
    \begin{split}
        \nabla_\theta L_\mathrm{actor}(\theta_{t}; \phi_{t}) &=\nabla_{\theta}\Big[\int_\Omega H(U^N_{\theta_t},Q^{N^*}_{\phi_t})d\mu \Big]\\
        &=\int_\Omega \bigg (\sum_i \partial_u b_i(x,U^N_{\theta_t}(x))\partial_{i} Q^{N^*}_{\phi_t}(x)+\frac{1}{2}\sum_{i,j} \partial_{u}(\Phi\Phi)^\intercal_{ij}(x, U_{\theta_t}^N(x))\partial_{ij}Q_{\phi_t}^{N^*}(x)\\ &\quad + \partial_u c(x,U^N_{\theta_t}(x))\bigg) \nabla_{\theta}U^N_{\theta_t}(x) d\mu(x) \\
        :&= \int_\Omega \partial_u H(U_{\theta_t}^N, Q_{\phi_t}^{N^*})(x) \nabla_{\theta}U^N_{\theta_t}(x) d\mu(x),
    \end{split}
\end{align}
which we clip to give the quantity
\begin{align} \label{actor_gradient}
    \begin{split}
        G_u(\theta_t; \phi_t)&=\int_\Omega  \psi^{N}\paren{\partial_u H(U_{\theta_t}^N, Q_{\phi_t}^{N^*})(x)} \nabla_{\theta}U^N_{\theta_t}(x) d\mu(x).
    \end{split}
\end{align}
On the other hand, the critic parameters $\phi$ update according to a clipped Q-PDE gradient (notice that \eqref{critic_gradient} below is a clipped version of \eqref{qpde})
\begin{align}
    \begin{split}\label{critic_gradient}
    G_q(\theta_t;\phi_t)&=\int_{\Omega}\brac{\paren{\psi^{N^*} \circ {\mathcal{L}^{U_{\theta_t}^N}Q^{N^*}_{\phi_t}}} \cdot  \paren{(\psi^{N^*})' \circ \mathcal{L}^{U_{\theta_t}^N}Q^{N^*}_{\phi_t}}}\nabla_\phi\paren{-Q_{\phi_t}^{N^*}}d\mu \\
    &= \int_{\Omega}F^{N^*}\paren{\mathcal{L}^{U_{\theta_t}^N}Q^{N^*}_{\phi_t}(x)} \nabla_\phi \paren{-Q_{\phi_t}^{N^*}(x)}d\mu(x).
    \end{split}
\end{align}
With these, we set the limiting training dynamics
\begin{align}\label{theta_gradient}
    \frac{d\theta_t}{dt}&= -\alpha_t N^{2\beta-1} G_u(\theta_t; \phi_t)
\end{align}
and
\begin{align}\label{phi_gradient}
    \frac{d \phi_t}{dt}&= -\omega_t(N^*)^{2\beta-1} G_q(\theta_t; \phi_t),
\end{align}
where $\alpha_t, \omega_t \geq 0$ are bounded learning rate schedulers such as $\alpha_t = \frac{1}{1+t}$ or $\omega_t = 1$. The rates may even be dynamically defined -- e.g. $$\alpha_t = \frac{1}{1+t+\nor{\g^{U_{\theta_t}^N}Q_{\phi_t}^{N^*}}_{L^2}^2} \qquad \text{ or } \qquad \alpha_t =\mathbf{1}_{\curlbrac{\nor{\g^{U_{\theta_t}^N}Q_{\phi_t}^{N^*}}_{L^2} \leq 1}}.$$
Since the learning rates can be zero, this framework covers both \textit{online} learning (where the actor and critic in theory train simultaneously, which is implemented in practice by having the actor and critic both update in the same time step, as described above) and \textit{offline} learning (where the actor or critic must learn for a long time or achieve some satisfactorily low loss before switching). The online case occurs when $\alpha_t, \omega_t > 0$ and the offline case when $\alpha_t > 0$ if and only if $\omega_t = 0$.

We now finally define the algorithm as an approximation of the training dynamics \eqref{theta_gradient} and \eqref{phi_gradient} using the discretization scheme discussed above. Note that in practice, \eqref{actor_gradient} and \eqref{critic_gradient} are calculated by Monte Carlo sampling, which is capable of accurately estimating these integrals in very high dimensions while being vastly less computationally expensive than exact evaluation. However, for our theoretical analysis, we will use the exact and continuously-updating gradients.

\begin{algorithm}[H] \label{algo_ac}
\SetAlgoLined
 \textbf{Parameters:} Hyperparameters of the actor and critic networks; Smooth truncation function $\psi^N$; State space $\Omega$ and action space $A$; SDE coefficients $b,\Phi$; Cost function $c$ and interpolated boundary condition $\overline g$; Auxiliary function $\eta$; Reference measure $\mu$; Numbers of sample points $\mathfrak{m}_\text{critic}$ and $\mathfrak{m}_\text{actor}$ from reference measure; Learning rate schedulers $\alpha_t, \omega_t$; Terminal time $\Lambda$; time increment $\Delta t$\;
 \textbf{Initialize:} Neural networks $Q_{\phi_0}^{N^*}$ and $U_{\theta_0}^N$; time $t = 0$\;
 ----------------------------------------------------------- \\
 \textit{While $t < \Lambda$}:\\
  ----------------------------------------------------------- \\
\textbf{Critic step}: \\
\quad Fix actor network $U_{\theta_t}^N$. Take one step towards minimizing the associated PDE loss \eqref{critic_loss} by updating the parameters $\phi_t^{N^*} \leftarrow \phi_{t+\Delta t}^{N^*}$ in accordance with a Monte Carlo estimate of \eqref{phi_gradient} calculated from $\mathfrak{m}_\text{critic}$ randomly sampled points\;
\textbf{Actor step}: \\
\quad Fix critic network $Q_{\phi_{t+\Delta t}}^{N^*}$. Take one step towards minimizing the integral-Hamiltonian \eqref{actor_loss} by updating the parameters $\theta_{t}^N \leftarrow \theta_{t+\Delta t}^N$ in accordance with a Monte Carlo estimate of \eqref{theta_gradient} calculated from $\mathfrak{m}_\text{actor}$ randomly sampled points\;
Update time $t \leftarrow t+\Delta t$\;
\caption{Actor-Critic Algorithm for the HJB Equation}
\end{algorithm}

We remark that while the dynamics of the system correspond to using stochastic gradient descent as an optimizer, Adam is also a good choice for practical implementations.

\subsection{Convergence to limiting training dynamics}
We now turn to the limiting dynamics of $Q_{\phi_t}^{N^*}$ and $U_{\theta_t}^N$ under  the exact gradients \eqref{theta_gradient} and \eqref{phi_gradient} as $N,N^* \to \infty$. In particular, we will show in Theorem \ref{thm_limit_ode} that the pair $(Q_{\phi_t}^{N^*}, U_{\theta_t}^N)$ approaches the solution of an infinite-dimensional ordinary differential equation.

For finite $N, N^*$, we have by the chain rule that for any $y \in \overline\Omega$, 
\begin{align} \begin{split} \label{456}
    \frac{d U_{\theta_t}^N}{dt}(y)&= \frac{d\theta_t}{dt}\cdot \nabla_\theta U_{\theta_t}^N(y)\\
    &= -\alpha_t N^{2\beta-1} \int_\Omega \psi^N\paren{\partial_u H(U_{\theta_t}^N, Q_{\phi_t}^{N^*})(x)} \paren{\nabla_{\theta} U_{\theta_t}^N(x)\cdot \nabla_{\theta} {U_{\theta_t}^N(y)}} d\mu(x)
\end{split}\end{align}
and 
\begin{align}
    \begin{split} \label{457}
    \frac{d Q_{\phi_t}^{N^*}}{dt}(y)&= \dfrac{d\phi_t}{dt}\cdot \nabla_\phi Q_{\phi_t}^{N^*}(y)\\
    &= \omega_t(N^*)^{2\beta-1}\int_{\Omega} F^{N^*}\big(\mathcal{L}^{U_{\theta_t}^N}Q^{N^*}_{\phi_t}(x)\big)\paren{\nabla_\phi Q_{\phi_t}^{N^*}(x)\cdot \nabla_\phi {Q_{\phi_t}^{N^*}(y)}} d\mu(x).
    \end{split}
\end{align}
We show that as $N, N^* \to \infty$, the dynamics approach
\begin{small}
\begin{align} \label{limit_ODE}
\begin{split}
    \frac{d Q_t}{dt} &= \omega_t\mathcal{B}\mathcal{L}^{U_t} Q_t, \\
    \frac{d U_t}{dt} &= -\alpha_t\mathcal{A} \partial_u H(U_t, Q_t), \\
    (Q_0, U_0) &= (\overline{g}, 0),
\end{split}
\end{align}
\end{small}
where $\A$ is the integral operator defined by
\begin{align}
    \A f(y) := \int_\Omega f(x)A(x,y) d\mu(x)
\end{align}
for $f \in \mathcal{D}(\A)$, with
\begin{align} \label{kernel_A_def}
\begin{split}
    A(x,y) :&=\mathbb{E}_{c,w,b}\Big[\nabla_{c,w,b}(c\sigma(w\cdot x+b))\cdot \nabla_{c,w,b}(c\sigma(w\cdot y+b))\Big] \\
    &= \E_{c,w,b} \brac{\sigma(w\cdot x + b)\sigma(w\cdot y + b) + c^2\sigma'(w\cdot x + b)\sigma'(w\cdot y + b) (x\cdot y + 1)},
\end{split}
\end{align}
and similarly $\mathcal{B}$ is the integral operator defined by
\begin{align}\label{kernel_B_op_def}
    \mathcal{B}f(y):=\int_\Omega f(x)B(x,y)d\mu(x)
\end{align}
for $f\in \mathcal{D}(\mathcal{B})$, with 
\begin{align} \label{kernel_B_def}
    B(x,y):=\eta(x)\eta(y)A(x,y).
\end{align}

We are able to show that the limit ODE flow \eqref{limit_ODE} admits a well-defined solution and that the training dynamics converge to it in a suitably strong sense.

\begin{lemma}[Existence and uniqueness of flow]
    The candidate limit ODE \eqref{limit_ODE} admits a unique solution flow $(Q_t,U_t)_{t\geq 0} \in C([0,\infty); C^2(\overline\Omega) \times C^0(\overline\Omega))$.
\end{lemma}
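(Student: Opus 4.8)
The plan is to treat \eqref{limit_ODE} as an ODE in the Banach space $C^2(\overline\Omega) \times C^0(\overline\Omega)$ and invoke the Picard--Lindelöf theorem, after establishing that the right-hand side is locally Lipschitz and that solutions cannot blow up in finite time. First I would verify the necessary mapping properties of the integral operators $\A$ and $\mathcal B$. Using Assumption \ref{assumption_neural_net} (so $\sigma \in C_b^4$) together with the moment bounds on $c_0, w_0, b_0$, the kernel $A(x,y)$ from \eqref{kernel_A_def} is well-defined and lies in $C^2(\overline\Omega \times \overline\Omega)$; since $\eta \in C^3$ by Assumption \ref{assumption_eta}, the same holds for $B(x,y) = \eta(x)\eta(y)A(x,y)$. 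Consequently, for any $f \in C^0(\overline\Omega)$, the functions $\A f$ and $\mathcal B f$ lie in $C^2(\overline\Omega)$, with $\nor{\A f}_{C^2(\overline\Omega)} \le C_A \nor{f}_{C^0(\overline\Omega)}$ and likewise for $\mathcal B$ (the constants coming from $\nor{A}_{C^2(\overline\Omega\times\overline\Omega)}$, $\nor{\eta}_{C^2}$, and $\mu(\Omega) < \infty$, which holds since $\Omega$ is precompact). In particular $\A$ maps $C^0 \to C^2$ boundedly, which is exactly what we need so that $Q_t \in C^2$ feeds back consistently into the Hamiltonian term.

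Next I would check that the two nonlinear "input" maps are locally Lipschitz from the state space into $C^0(\overline\Omega)$. The map $(U, Q) \mapsto \g^U Q$ involves $b_i(x, U(x))$, $(\Phi\Phi^\intercal)_{ij}(x,U(x))$, $c(x,U(x))$ multiplied against $\partial_i Q$, $\partial^2_{ij} Q$, and $\gamma Q$; by Assumption \ref{assumption_bounded_second_derivative} these coefficient functions are $C_b^{0,2}$ in $(x,a)$, hence bounded with bounded derivatives in $a$, so the composition $x \mapsto b_i(x,U(x))$ is Lipschitz in $U$ uniformly in $x$ with values bounded independent of $U$. Since $Q \mapsto (\nabla Q, \mathrm{Hess}\,Q)$ is linear and bounded $C^2 \to C^0$, the product is locally Lipschitz in $(U,Q) \in C^0 \times C^2$ into $C^0$. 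An identical argument, now using the first-order $a$-derivatives $\partial_u b_i$, $\partial_u(\Phi\Phi^\intercal)_{ij}$, $\partial_u c$ (bounded and Lipschitz in $a$ by Assumption \ref{assumption_bounded_second_derivative}), shows $(U,Q) \mapsto \partial_u H(U,Q)$ is locally Lipschitz $C^0 \times C^2 \to C^0$. Composing with the bounded linear operators $\mathcal B$ and $\A$ (and with the bounded schedulers $\omega_t, \alpha_t$, which I would take to be, say, Lipschitz or at least bounded measurable — Carathéodory conditions suffice), the full right-hand side of \eqref{limit_ODE} is locally Lipschitz in $(Q,U)$, locally in $t$. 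Picard--Lindelöf then yields a unique maximal solution $(Q_t, U_t) \in C([0,T_{\max}); C^2(\overline\Omega) \times C^0(\overline\Omega))$ with initial data $(\overline g, 0)$, which is admissible since $\overline g \in C^2_b(\overline\Omega)$ by Assumption \ref{assumption_g}.

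Finally I would rule out finite-time blow-up to get $T_{\max} = \infty$. The key observation is that the coefficient functions entering $\g^U Q$ and $\partial_u H(U,Q)$ are \emph{globally} bounded (Assumption \ref{assumption_bounded_second_derivative}: $C_b^{0,2}$), so $\nor{\g^{U_t} Q_t}_{C^0} \le C_1(1 + \nor{Q_t}_{C^2})$ and $\nor{\partial_u H(U_t,Q_t)}_{C^0} \le C_2(1 + \nor{Q_t}_{C^2})$ — crucially both bounds are \emph{affine} in $\nor{Q_t}_{C^2}$ and independent of $\nor{U_t}_{C^0}$. Hence from \eqref{limit_ODE}, $\frac{d}{dt}\nor{Q_t}_{C^2} \le \omega_t \nor{\mathcal B} C_1 (1 + \nor{Q_t}_{C^2})$, and Grönwall's inequality gives an a priori bound on $\nor{Q_t}_{C^2}$ on any finite interval (using $\omega_t$ bounded); feeding this into the $U$-equation, $\nor{U_t}_{C^0} \le \nor{U_0}_{C^0} + \int_0^t \alpha_s \nor{\A} \nor{\partial_u H(U_s,Q_s)}_{C^0}\,ds$ is then also finite on any finite interval. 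Therefore the solution does not escape every bounded set in finite time, so it extends globally, giving $(Q_t, U_t)_{t \ge 0} \in C([0,\infty); C^2(\overline\Omega) \times C^0(\overline\Omega))$. The main obstacle — and the step deserving the most care — is the first one: confirming that $\A$ and $\mathcal B$ genuinely land in $C^2(\overline\Omega)$ (not merely $C^0$ or $L^2$), since $C^2$-regularity of the output $Q_t$ is exactly what makes the composition $\g^{U_t} Q_t$ meaningful and what closes the fixed-point loop; this rests on differentiating under the expectation in \eqref{kernel_A_def} twice in each variable, which is justified by $\sigma \in C_b^4$ and the third-moment bound on $w_0$ together with $\Omega$ being bounded.
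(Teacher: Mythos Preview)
Your proposal is correct and follows essentially the same strategy as the paper: Picard--Lindel\"of in the Banach space $C^2(\overline\Omega)\times C^0(\overline\Omega)$, using the regularity of the kernels $A,B$ (the paper's Lemma~\ref{lemma_kernel_regularity}) to get the required mapping properties of $\A,\mathcal B$, the $C_b^{0,2}$ bounds on the coefficients to control $\g^U Q$ and $\partial_u H(U,Q)$, and a Gr\"onwall argument for the a priori bound on $\nor{Q_t}_{C^2}$ (the paper's Lemma~\ref{QQ}). Your local-Lipschitz $\to$ local existence $\to$ no-blow-up structure is arguably cleaner than the paper's, which asserts global Lipschitz on $[0,T]$ by invoking the bound $\nor{Q_t}_{C^2}\le C$ from Lemma~\ref{QQ} (a statement about the solution) inside the existence argument itself; your separation of these steps avoids that apparent circularity. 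One small slip: you write ``$\A$ maps $C^0\to C^2$ boundedly, which is exactly what we need so that $Q_t\in C^2$''---it is $\mathcal B$ (not $\A$) that produces $Q_t$, though since you established the $C^2$ mapping property for both operators this does not affect the argument.
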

\begin{proof}
    Under their respective supremum norms, the spaces $C^2(\overline\Omega)$ and $C^0(\overline\Omega)$ are Banach. Thus, we may employ the Picard--Lindel\"{o}f theorem for Banach spaces (cf.~\cite[Theorem 2.2.2]{kolokoltsov2019differential}). We aim to show that the dynamics of the ODE \eqref{limit_ODE} are of the form $(\partial_t Q_t, \partial_t U_t) = f(t,U_t,Q_t)$ for some function $f : [0,\infty) \times C^2(\overline\Omega) \times C^0(\overline\Omega) \to C^2(\overline\Omega) \times C^0(\overline\Omega)$ which is (globally) Lipschitz in the last two variables whenever the time component of its domain is restricted to $[0,T]$ for any $T > 0.$
    
    We now fix an arbitrary $T > 0$. Note that $\mathcal{A} : C^0(\overline{\Omega}) \to C^0(\overline\Omega)$, $\mathcal{B}: C^0(\overline \Omega) \to C^2(\overline \Omega)$ are Lipschitz. This follows immediately from the regularity of $A \in C^0(\overline{\Omega})$ and $B \in C^2(\overline{\Omega})$ (cf.~Lemma \ref{lemma_kernel_regularity}). Further, Lemma \ref{QQ} gives that $\nor{Q_t}_{C^2(\overline\Omega)} 
    \leq C$. Combining this with Assumption \ref{assumption_bounded_second_derivative}, it is clear that the operator $\g^\cdot(\cdot):C^2(\overline \Omega) \times C^0(\overline{\Omega}) \to C^0(\overline{\Omega})$ and the derivative of the Hamiltonian $\partial_uH(\cdot,\cdot) : C^0(\overline{\Omega})\times C^2(\overline{\Omega}) \to C^0(\overline{\Omega})$ are both Lipschitz. Thus, $f(t,\cdot,\cdot)$ is Lipschitz for any $t \in [0,T]$ with a constant that depends on $T$ but is independent of $t$.

    This implies there exists a unique solution flow $(Q_t, U_t)_{0 \leq t \leq T} \in C([0,T]; C^2(\overline{\Omega}) \times C^0(\overline\Omega))$ for any $T > 0$. Since $T > 0$ is arbitary, the flow $(Q_t,U_t)_{t\geq 0} \in C([0,\infty);C^2(\overline{\Omega}) \times C^0(\overline\Omega))$ exists and is unique.
\end{proof}

\begin{theorem}[Wide limit of the actor-critic pair] \label{thm_limit_ode}
    For any $t \geq 0$, the critic $Q_{\phi_t}^{N^*}$ in \eqref{456} and actor $U_{\theta_t}^N$ in \eqref{457} converge to their limits $Q_t$ and $U_t$, given by \eqref{limit_ODE}, as $N, N^* \to \infty$, in the sense that
    \begin{align}\label{eq_limit_ode_convergence}
        \lim_{N,N^* \to \infty}\mathbb{E}\Big[\|Q_{\phi_t}^{N^*} - Q_t\|_{\mathcal{H}^2} + \|U_{\theta_t}^N-U_t\|_{L^2}\Big]=0.
    \end{align}
\end{theorem}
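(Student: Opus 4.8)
The plan is to prove Theorem \ref{thm_limit_ode} via a ``propagation of chaos''-style argument: introduce an intermediate idealized flow governed by the empirical kernels but with truncation removed, control its distance to both the true finite-width dynamics and to the limit ODE, and close everything with a Gr\"{o}nwall estimate. First I would study the empirical neural tangent kernels
\[
A^N_t(x,y) := \nabla_\theta U^N_{\theta_t}(x)\cdot \nabla_\theta U^N_{\theta_t}(y), \qquad B^{N^*}_t(x,y) := \nabla_\phi Q^{N^*}_{\phi_t}(x)\cdot \nabla_\phi Q^{N^*}_{\phi_t}(y),
\]
and show that at initialization they concentrate (in an appropriate $C^0$- or $C^2$-in-$(x,y)$ sense, using Assumption \ref{assumption_neural_net}) around the deterministic kernels $A,B$ of \eqref{kernel_A_def}--\eqref{kernel_B_def} by a law-of-large-numbers argument over the $N$ (resp.\ $N^*$) i.i.d.\ units. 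The scaling $N^{2\beta-1}$ in \eqref{theta_gradient}--\eqref{phi_gradient} is exactly what makes the kernel-driven dynamics $O(1)$; with $\beta \in (\tfrac12,1)$ this is the ``lazy''/NTK regime.

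Second, I would establish a priori bounds that the parameters move only a little: using the gradient clipping (Definition \ref{definition_smooth_truncation}, in particular $|\psi^N| \le 2N^\delta$ and $|(\psi^N)'|\le 1$) together with the scaling, one shows $\|\theta_t - \theta_0\|$ and $\|\phi_t-\phi_0\|$ remain small (of order $N^{\delta - (1-\beta)}$ or similar, which $\to 0$ since $\delta < \tfrac{1-\beta}{4}$), uniformly on $[0,T]$. This in turn implies that the empirical kernels $A^N_t, B^{N^*}_t$ stay close to their initial values for all $t\le T$, hence close to $A,B$; it also gives the uniform regularity bounds $\|Q^{N^*}_{\phi_t}\|_{\h^2}\le C$, $\|U^N_{\theta_t}\|_{L^2}\le C$ needed so that $\g^{U}Q$ and $\partial_u H(U,Q)$ are well-controlled (this is the finite-width analogue of Lemmas \ref{QQ} and \ref{lemma_kernel_regularity}). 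The truncation functions then become eventually inactive on the relevant range of values (since $\g^U Q$ and $\partial_u H$ are bounded on this ball), so $F^{N^*}$ and $\psi^N$ may be replaced by the identity up to a vanishing error.

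Third, with the kernels pinned near $A,B$ and the clipping neutralized, I would write the difference $(Q^{N^*}_{\phi_t} - Q_t,\ U^N_{\theta_t} - U_t)$ using \eqref{456}--\eqref{457} versus \eqref{limit_ODE}, bound each term by (i) a kernel-discrepancy term, (ii) a truncation-error term, and (iii) a term Lipschitz in the difference itself — using that $\A:C^0\to C^0$, $\mathcal B:C^0\to C^2$, and $\g^{\cdot}(\cdot)$, $\partial_u H(\cdot,\cdot)$ are Lipschitz on the relevant balls, exactly as in the existence-uniqueness lemma. Taking expectations and applying Gr\"{o}nwall's inequality in the product norm $\|\cdot\|_{\h^2}\times\|\cdot\|_{L^2}$ yields
\[
\E\big[\|Q^{N^*}_{\phi_t}-Q_t\|_{\h^2} + \|U^N_{\theta_t}-U_t\|_{L^2}\big] \le e^{CT}\big(\varepsilon_N + \varepsilon_{N^*}\big),
\]
where $\varepsilon_N,\varepsilon_{N^*}\to0$ collect the kernel-concentration and truncation errors; fixing $t$ and letting $N,N^*\to\infty$ gives \eqref{eq_limit_ode_convergence}.

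\textbf{The main obstacle} I anticipate is the coupling between the actor and critic: the critic drift depends on $U^N_{\theta_t}$ through the nonlinear operator $\g^{U}$ (which involves the \emph{second} derivatives of $Q$, so errors must be tracked in the $\h^2$-norm, not merely $L^2$), while the actor drift depends on $Q^{N^*}_{\phi_t}$ through $\partial_u H$, which again sees $\mathrm{Hess}\, Q$. Thus the Gr\"{o}nwall argument cannot be run separately for the two networks — it must be a genuinely coupled estimate, and one must be careful that the $\h^2$-regularity of the critic is propagated uniformly in $N^*$ (so that the composition $\g^{U^N_{\theta_t}}Q^{N^*}_{\phi_t}$ lands in $C^0$ with controlled norm) before the clipping can be removed. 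A secondary technical point is that the NTK concentration for the critic must be in a norm strong enough ($C^2$ in the spatial variables, via $B(x,y)=\eta(x)\eta(y)A(x,y)$ and the smoothness of $\eta,\bar g,\sigma$) to control $\|Q^{N^*}_{\phi_t}-Q_t\|_{\h^2}$, which requires differentiating the kernel estimates twice and hence leaning on $\sigma\in C^4_b$ and the third-moment bound on $w_0$ in Assumption \ref{assumption_neural_net}.
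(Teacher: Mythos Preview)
Your proposal correctly identifies all the main ingredients the paper uses: per-parameter movement bounds of order $N^{\delta+\beta-1}$ from the clipping (Lemma~\ref{lemma_param_growth}), concentration of the empirical kernels $A^N_t,B^N_t$ around $A,B$ including the $C^2$-in-$y$ regularity for $B$ (Lemmas~\ref{l8}--\ref{l9}), the Lipschitz properties of $\g^u$ and $\partial_u H$ (Proposition~\ref{proposition_lipschitz}), and a coupled Gr\"onwall argument in the mixed norm. You are also right that the argument cannot be decoupled and must track the critic in $\h^2$.

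However, there is a genuine gap in your step two. You claim the parameter bounds ``give the uniform regularity bounds $\|Q^{N^*}_{\phi_t}\|_{\h^2}\le C$'', and then use this to argue that the truncation becomes inactive on the finite-width quantities $\g^{U^N_t}Q^N_t$ and $\partial_u H(U^N_t,Q^N_t)$. But the parameter bounds do \emph{not} give a uniform $\h^2$ bound on $Q^N_t$: deterministically one only has $\|Q^N_t\|_{C^2}\le CN^{1-\beta}$, and even the increment $Q^N_t-Q^N_0$ picks up a factor $N^\delta$ from the bound $|F^N|\le 2N^\delta$ when integrated against the (merely $O(1)$-in-expectation) kernel derivatives. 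So you cannot conclude that $\g^{U^N_t}Q^N_t$ stays inside $[-N^\delta,N^\delta]$, and hence cannot drop the truncation before comparing to the limit. This makes the order of operations in your outline circular.

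The paper avoids this by \emph{not} removing the truncation early and \emph{not} bounding $Q^N_t$ directly. Instead it writes the difference $D_\alpha(Q^N_t-Q_t)(y)$ via a four-term telescoping of $F^N(\g^{U^N_s}Q^N_s)D_\alpha^y B^N_s - (\g^{U_s}Q_s)D_\alpha^y B$:
\begin{itemize}
\item the kernel-discrepancy term keeps the full $F^N(\g^{U^N_s}Q^N_s)$ and bounds it by $2N^\delta$; this factor is then absorbed by the kernel convergence rate (Lemmas~\ref{l8}--\ref{l9} are stated with an extra $N^{2\delta}$ precisely for this reason);
\item the two ``Lipschitz'' terms use that $F^N$ is uniformly Lipschitz (not that it equals the identity) to produce $C\sum_{|\alpha|\le 2}|D_\alpha(Q^N_s-Q_s)|$ and $C|U^N_s-U_s|\sum_{|\alpha|\le 2}|D_\alpha Q_s|$, so only the regularity of the \emph{limit} $Q_s$ is needed (Lemma~\ref{QQ});
\item the final term is the truncation error on the \emph{limit} quantities $\g^{U_s}Q_s$ and $\partial_u H(U_s,Q_s)$, which \emph{are} bounded, so Lemma~\ref{lemma_dominated_convergence} applies.
\end{itemize}
The same splitting is done for $U^N_t-U_t$. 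Summing into $\Xi^N_t(y):=|U^N_t-U_t|(y)+\sum_{|\alpha|\le2}|D_\alpha(Q^N_t-Q_t)|(y)$ and running Gr\"onwall on $\int_\Omega(\Xi^N_t)^2\,d\mu$ closes the estimate. No intermediate idealized flow is introduced, and no a priori $\h^2$ control of $Q^N_t$ is ever required. If you reorganize your step three along these lines---keep $F^N,\psi^N$ in play and use their Lipschitz constant rather than discarding them---your argument goes through.
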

\begin{proof}
    See Section \ref{convergence_proof}.
\end{proof}

The above theorem demonstrates that the uncertainties typically associated with neural networks (non-convexities in the loss landscape introduced by the network architecture leading to troublesome local minima, random parameter initializations, etc.)~disappear in the infinite-width limit, with instead the dynamics following the explicit infinite-dimensional ODE \eqref{limit_ODE}. We may therefore directly analyze this ODE to understand the convergence properties of Algorithm \ref{algo_ac}, which we undertake in the following subsection.

\subsection{Analysis of the limit training dynamics}
We now move to analyzing the infinite-width dynamics given in \eqref{limit_ODE}. The following verification-type result indicates that, subject to the convexity-like assumption below on the Hamiltonian, the only stationary point (i.e., fixed point) of the limit ODE system \eqref{limit_ODE} is the pairing $(V,u^*)$ of the true value function and an optimal control.

\begin{assumption}[First-order sufficiency for optimization of the Hamiltonian] \label{convex_hamiltonian}
    For all $x \in \Omega$ and functions $Q \in C^2(\Omega)$, the Hamiltonian
    \begin{equation*}
        H(a,Q(x)) = \sum_{i}b_i(x,a)\partial_i Q(x)+\frac{1}{2}\sum_{i,j}{\Phi\Phi^\intercal}_{ij}(x,a)\partial^2_{ij} Q(x) + c(x,a)
    \end{equation*}
    considered as a map $A \to \R$ has the property that
    \begin{equation*}
        \partial_u H(U,Q)(x) := \partial_u H(U(x), Q(x)) = 0 \implies U(x) \text{ minimizes } H(\cdot, Q(x)).
    \end{equation*}
\end{assumption}

\begin{remark}
    The above assumption is satisfied, for example, if for all $x \in \Omega$ and $Q \in C^2$, there exists a strictly increasing function $\kappa$ such that $\kappa(H(\cdot, Q(x))) : A \to \R$ is strictly convex, or $H(\cdot, Q(x)) : A \to \R$ satisfies a Polyak--{\L}ojasiewicz condition.
\end{remark}

\begin{theorem}[Verification]\label{fixed_point_verification}
Suppose there exist functions $\Tilde Q \in C^2(\Omega) \cap C^0(\overline{\Omega})$ and $\Tilde U \in C^0(\overline \Omega)$ such that
\begin{align}\label{FixedPointCondition}\begin{split}
    \mathcal{B}\mathcal{L}^{\Tilde{U}} \Tilde{Q} &= 0, \\
    \A \partial_u H(\Tilde{U}, \Tilde{Q}) &= 0, \\
    \Tilde{Q}\rvert_{\partial\Omega} &= g. 
    \end{split}
\end{align}
Then $\Tilde{Q}$ is the true value function of the control problem
\begin{equation*}
    \Tilde{Q}(x) = V(x) = \inf_{u \in \mathcal{U}} V^u(x), \quad \text{ for all } x \in \overline\Omega,
\end{equation*} and $\Tilde{U}$ minimizes the Hamiltonian
\begin{equation*}
    \Tilde{U}(x) = \arg\min_{a \in A} \curlbrac{\sum_{i}b_i(x,a)\partial_i V(x)+\frac{1}{2}\sum_{i,j}{\Phi\Phi^\intercal}_{ij}(x,a)\partial^2_{ij} V(x) + c(x,a)}, \quad \text{for all } x \in \Omega,
\end{equation*}
and is therefore an optimal control.

In particular, any fixed point $(\tilde{Q}, \tilde{U}) \in C^2(\overline{\Omega}) \times C^0(\overline{\Omega})$ of the ODE flow \eqref{limit_ODE} when the learning rates $\alpha,\omega$ are positive is a solution of the stochastic control problem.
\end{theorem}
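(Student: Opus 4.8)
The plan is to exploit the injectivity (or near-injectivity) of the kernel operators $\mathcal{A}$ and $\mathcal{B}$ to pass from the integral conditions in \eqref{FixedPointCondition} back to pointwise conditions, and then to invoke the classical verification theorem. First I would argue that the kernel $A(x,y)$ in \eqref{kernel_A_def} is positive definite (as an expectation of an inner product of feature gradients, this is a genuine NTK-type kernel), so that $\mathcal{A}: L^2(\mu) \to L^2(\mu)$ is injective on continuous functions; the same then holds for $\mathcal{B}$ since $B(x,y) = \eta(x)\eta(y)A(x,y)$ and $\eta > 0$ on $\Omega$. Concretely, $\mathcal{A}f = 0$ together with continuity of $f$ and density of the feature maps forces $\int_\Omega f(x)\sigma(w\cdot x + b)\,d\mu(x) = 0$ for (a.e.) $(w,b)$, and a standard argument using the non-constancy of $\sigma$ (or a Stone--Weierstrass / universal approximation argument on $\overline\Omega$) yields $f \equiv 0$ on the support of $\mu$; since $\mu$ is absolutely continuous with full support on $\Omega$ and $f$ is continuous, $f \equiv 0$ on $\overline\Omega$.

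Given this, from $\A \partial_u H(\tilde U, \tilde Q) = 0$ I conclude $\partial_u H(\tilde U(x), \tilde Q(x)) = 0$ for all $x \in \Omega$, and then Assumption \ref{convex_hamiltonian} upgrades this stationarity to genuine minimization: $\tilde U(x) = \arg\min_{a\in A} H(a, \tilde Q(x))$ for every $x \in \Omega$. Similarly, from $\mathcal{B}\mathcal{L}^{\tilde U}\tilde Q = 0$ and injectivity of $\mathcal{B}$ I get $\mathcal{L}^{\tilde U}\tilde Q(x) = 0$ pointwise on $\Omega$, i.e. $\tilde Q$ solves the linear PDE $b\cdot\nabla \tilde Q + \tfrac12\mathrm{Tr}(\Phi\Phi^\intercal\,\mathrm{Hess}\,\tilde Q) + c - \gamma \tilde Q = 0$ with control $\tilde U$. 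But since $\tilde U(x)$ minimizes $H(\cdot,\tilde Q(x))$ pointwise, we have
\begin{equation*}
    \inf_{a\in A}\Big\{\sum_i b_i(x,a)\partial_i \tilde Q(x) + \tfrac12\sum_{i,j}(\Phi\Phi^\intercal)_{ij}(x,a)\partial^2_{ij}\tilde Q(x) + c(x,a)\Big\} = H(\tilde U(x),\tilde Q(x)) = \gamma\tilde Q(x),
\end{equation*}
so $\tilde Q$ actually solves the full nonlinear HJB equation \eqref{hjb_full}, with boundary data $\tilde Q|_{\partial\Omega} = g$ by the third condition in \eqref{FixedPointCondition}. Since $\tilde Q \in C^2(\Omega)\cap C^0(\overline\Omega)$, the verification theorem of \cite{pham2009continuous} applies directly and identifies $\tilde Q = V$ and $\tilde U$ as an optimal feedback control.

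For the final sentence, I would observe that at a fixed point of \eqref{limit_ODE} with $\alpha, \omega > 0$ the right-hand sides vanish, i.e. $\omega_t \mathcal{B}\mathcal{L}^{\tilde U}\tilde Q = 0$ and $-\alpha_t \mathcal{A}\partial_u H(\tilde U,\tilde Q) = 0$, which (dividing by the positive learning rates) are exactly the first two conditions of \eqref{FixedPointCondition}; the boundary condition $\tilde Q|_{\partial\Omega} = g$ is automatic from the hard-constrained form \eqref{q_def} since $\tilde Q = Z\eta + \overline g$ with $\eta|_{\partial\Omega} = 0$ and $\overline g|_{\partial\Omega} = g$. Hence the fixed point satisfies \eqref{FixedPointCondition} and the first part applies. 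The main obstacle I anticipate is the injectivity argument for $\mathcal{A}$ and $\mathcal{B}$: making rigorous the step from "the integral against every feature $\sigma(w\cdot x+b)$ vanishes" to "$f\equiv 0$" requires care about the support of the initialization distribution of $(w,b)$ and a density/completeness statement for the family $\{\sigma(w\cdot\,+b)\}$ in $C^0(\overline\Omega)$ or $L^2(\mu)$ — essentially a universal approximation argument — and one must also confirm that the relevant functions ($\partial_u H(\tilde U,\tilde Q)$ and $\mathcal{L}^{\tilde U}\tilde Q$) are continuous so that pointwise vanishing follows from vanishing $\mu$-a.e. This is presumably where an auxiliary lemma on the kernels (analogous to Lemma \ref{lemma_kernel_regularity}) will do the work.
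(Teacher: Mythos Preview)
Your proposal is correct and follows essentially the same approach as the paper: deduce from positive definiteness (injectivity) of $\mathcal{A}$ and $\mathcal{B}$ that $\mathcal{L}^{\tilde U}\tilde Q = 0$ and $\partial_u H(\tilde U,\tilde Q) = 0$ pointwise, apply Assumption~\ref{convex_hamiltonian} to upgrade to pointwise minimization, and then invoke the verification theorem. The paper dispatches the injectivity step you flag as the main obstacle by simply citing that the NTK $\mathcal{A}$ is well known to be positive definite and referring to \cite[Lemma~35]{JMLR:v24:22-1075} for $\mathcal{B}$, rather than sketching a universal approximation argument as you do.
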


\begin{proof}
    It is well known that $\A$, the limiting neural tangent kernel, is positive definite as a self-adjoint $L^2 \to L^2$ operator. Further, \cite[Lemma 35]{JMLR:v24:22-1075} shows that the same is true for $\mathcal{B}$. Thus, $\g^{\tilde U}\tilde Q = 0$ $\mu$-a.e. and $\partial_u H(\tilde{U},\tilde{Q}) = 0$ $\mu$-a.e. By the regularity of the operators and of $\tilde{Q}$ and $\tilde{U}$, these equalities must be true pointwise everywhere.
    
   Since $\g^{\tilde U}\tilde Q = 0$ and $\Tilde{Q}\rvert_{\partial\Omega} = g$, it follows by the Feynman--Kac theorem that $\Tilde{Q} = V^{\tilde U}$, the value function of the process when controlled by $\tilde U$.
    
    We now aim to show that $\tilde U$ is optimal. By Assumption \ref{convex_hamiltonian}, that $\partial_u H(\tilde U, \tilde Q) = 0$ implies $\tilde U$ is a pointwise minimizer of $H(\cdot, \Tilde{Q})(x)$. That is to say, for each $x \in \Omega$,
    \begin{align*}
        0 &= \sum_{i}b_i(x,\Tilde{U}(x))\partial_i 
        \Tilde{Q}(x)+\frac{1}{2}\sum_{i,j}{\Phi\Phi^\intercal}_{ij}(x,\Tilde{U}(x))\partial^2_{ij} \Tilde{Q}(x) + c(x,\Tilde{U}(x)) - \gamma \Tilde{Q}(x) \\
        &= \inf_{a \in A} \curlbrac{\sum_{i}b_i(x,a)\partial_i 
        \Tilde{Q}(x)+\frac{1}{2}\sum_{i,j}{\Phi\Phi^\intercal}_{ij}(x, a) \partial^2_{ij} \Tilde{Q}(x) + c(x,a)} - \gamma \Tilde{Q}(x).
    \end{align*}
    This, combined with the fact that $\Tilde{Q}\rvert_{\partial\Omega} = g$, implies by the verification theorem (see, e.g.~\cite{pham2009continuous}) that $\Tilde{Q} = V$, the true value function. Of course, the control $\Tilde{U}$ achieves $V^{\Tilde{U}} = \Tilde{Q} = V$ and so is optimal.
\end{proof}

Assuming that the learning rates $\alpha, \omega$ are positive, this result shows that any fixed point of the limiting flow is a solution of the stochastic control problem. Consequently, if one can show that the infinite-width limit ODE flow \eqref{limit_ODE} converges to a fixed point, then that result combined with Theorems \ref{thm_limit_ode} and \ref{fixed_point_verification} would constitute a proof of the convergence of Algorithm \ref{algo_ac}. More precisely, if one establishes that the limiting flow $(Q_t,U_t)_{t\geq 0}$ converges in $(C^2(\Omega) \cap C^0(\overline{\Omega}))\times C^0(\overline{\Omega})$, that is, in supremum norm, and the learning rates $\alpha,\omega$ are not integrable with respect to time, then it is straightforward to show that this limit of $(Q_t,U_t)_{t\geq 0}$ satisfies (\ref{FixedPointCondition}), and is thus a solution of the stochastic control problem. 

Unfortunately, though in practice numerical convergence to a fixed point reliably occurs, proving it remains an open problem. Few theoretical tools are available for infinite-dimensional ODEs -- especially ones as nonlinear as \eqref{limit_ODE}. We suspect that a proof of convergence to a fixed point in an appropriate topology is possible under some additional regularity conditions and the assumption that $\{\g^u\}_{u\in\mathcal{U}}$ is uniformly strongly negatively monotone in $L^2$.

\section{Numerical Analysis} \label{numerics}
We begin this section with a short proposition that reduces the computational cost of calculating the PDE operator $\g^{U_{\theta_t}^N} Q_{\phi_t}^{N^*}$ needed in \eqref{critic_gradient}. This is adapted from \cite[Prop. 2.2]{cheridito2025} and \cite[Prop.~1]{duarte2024}.
\begin{proposition}
    For some $Q \in C^2(\Omega)$, $x \in \Omega$, and $a \in A$, consider the function
    \begin{equation}
        \psi(h; Q, x, a) := \sum_{i=1}^d Q\paren{x + \frac{h}{\sqrt 2}\Phi_{\star,i}(x,a) + \frac{h^2}{2d} b(x,a)}.
    \end{equation}
    Then
    \begin{align*}
        \psi''(0; Q, x, a) &= \sum_{i=1}^d b_i(x,a) \partial_iQ(x) + \frac{1}{2}\sum_{i,j=1}^d \Phi\Phi^\intercal(x,a) \partial_{ij} Q(x) \\
        &= \inprod{b(x,a)}{\nabla Q(x)} + \frac{1}{2}\mathrm{Tr}\paren{\Phi\Phi^\intercal(x,a) \mathrm{Hess}Q(x)}.
    \end{align*}
\end{proposition}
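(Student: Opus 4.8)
The plan is to compute the second derivative of $\psi(h; Q, x, a)$ at $h = 0$ directly using the chain rule, treating $\psi$ as a composition of the smooth function $Q$ with the smooth curve $h \mapsto \gamma_i(h) := x + \frac{h}{\sqrt 2}\Phi_{\star,i}(x,a) + \frac{h^2}{2d} b(x,a)$ for each $i$, and then summing over $i$. Since $Q \in C^2(\Omega)$ and each $\gamma_i$ is a polynomial in $h$ with values in $\Omega$ for $h$ near $0$, the composition is $C^2$ near $h=0$ and the derivatives exist classically.

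First I would record the derivatives of the curve: $\gamma_i'(h) = \frac{1}{\sqrt 2}\Phi_{\star,i}(x,a) + \frac{h}{d} b(x,a)$ and $\gamma_i''(h) = \frac{1}{d} b(x,a)$, so that $\gamma_i(0) = x$, $\gamma_i'(0) = \frac{1}{\sqrt 2}\Phi_{\star,i}(x,a)$, and $\gamma_i''(0) = \frac{1}{d} b(x,a)$. Then, applying the chain rule twice to $h \mapsto Q(\gamma_i(h))$,
\begin{align*}
    \frac{d^2}{dh^2} Q(\gamma_i(h)) = \gamma_i'(h)^\intercal \mathrm{Hess}\,Q(\gamma_i(h))\, \gamma_i'(h) + \nabla Q(\gamma_i(h)) \cdot \gamma_i''(h).
\end{align*}
Evaluating at $h = 0$ gives
\begin{align*}
    \frac{d^2}{dh^2}\Big|_{h=0} Q(\gamma_i(h)) = \frac{1}{2}\Phi_{\star,i}(x,a)^\intercal \mathrm{Hess}\,Q(x)\, \Phi_{\star,i}(x,a) + \frac{1}{d}\,b(x,a)\cdot\nabla Q(x).
\end{align*}

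Next I would sum over $i = 1, \dots, d$. The second term contributes $d \cdot \frac{1}{d} b(x,a)\cdot\nabla Q(x) = b(x,a)\cdot\nabla Q(x) = \sum_i b_i(x,a)\partial_i Q(x)$, which matches the first-order part of the claim. For the first term, I would use that $\sum_{i=1}^d \Phi_{\star,i}(x,a)^\intercal M \Phi_{\star,i}(x,a) = \mathrm{Tr}\paren{\Phi(x,a)^\intercal M \Phi(x,a)} = \mathrm{Tr}\paren{M \Phi(x,a)\Phi(x,a)^\intercal} = \mathrm{Tr}\paren{\Phi\Phi^\intercal(x,a)\, M}$ for any symmetric matrix $M$ (here $\Phi_{\star,i}$ is understood appropriately as the relevant columns, and the identity is just $\sum_i e_i^\intercal \Phi^\intercal M \Phi e_i$ together with cyclicity of the trace; one should double-check the indexing convention so that exactly the $d\times d$ matrix $\Phi\Phi^\intercal$ appears). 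With $M = \mathrm{Hess}\,Q(x)$ this yields $\frac{1}{2}\mathrm{Tr}\paren{\Phi\Phi^\intercal(x,a)\,\mathrm{Hess}\,Q(x)} = \frac{1}{2}\sum_{i,j}\Phi\Phi^\intercal_{ij}(x,a)\partial_{ij}Q(x)$, completing the proof.

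There is no serious obstacle here; this is a routine Itô-type Taylor expansion identity. The only point requiring mild care is the bookkeeping of indices in the sum-of-quadratic-forms step — specifically confirming that the $h/\sqrt 2$ scaling produces the factor $\frac{1}{2}$ on the Hessian term and that summing the $d$ directions $\Phi_{\star,i}$ reconstitutes $\Phi\Phi^\intercal$ rather than $\Phi^\intercal\Phi$ — and noting that the $h^2/(2d)$ coefficient on $b$ is chosen precisely so the $d$-fold sum of $\gamma_i''(0)$ contributions collapses to a single copy of $b\cdot\nabla Q$.
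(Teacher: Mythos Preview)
Your proof is correct and is exactly the direct chain-rule computation one would expect; the paper itself omits the proof entirely, merely citing \cite{cheridito2025} and \cite{duarte2024} for the result. Your caveat about the indexing convention is well placed: strictly speaking the sum should run over the $d'$ columns of $\Phi$ (so that $\sum_i \Phi_{\star,i}\Phi_{\star,i}^\intercal = \Phi\Phi^\intercal$), and the paper's upper index $d$ appears to be a minor slip or an implicit assumption that $d=d'$.
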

It follows that for an actor $U_{\theta_t}^{N^*} : \Omega \to A$, we have
\begin{equation*}
    \g^{U_{\theta_t}^N}Q_{\phi_t}^{N^*}(x) = \phi''(0; Q,x,U_\theta^N(x)) + c(x, U_\theta^N(x)) - \gamma Q_\phi^{N^*}(x)
\end{equation*}
at each sampled point $x \in \Omega$. Hence, we have reduced the cost of computing the PDE operator (and thus the critic update gradient \eqref{critic_gradient}) by alleviating the need to calculate the $d\times d$ matrix of second derivatives $\mathrm{Hess} Q_{\phi_t}^{N^*}$, instead using only the second derivative of $\psi$, which is the sum of $d$ univariate functions. When $d$ is large, the resulting computational savings can be very significant. Note also that taking the derivative in $u$ of $\g^{U_{\theta_t}^N}Q_{\phi_t}^{N^*}(x)$ gives an inexpensive way to calculate $\partial_uH(U_{\theta_t}^N, Q_{\phi_t}^{N^*})$ for the actor update gradient \eqref{actor_gradient}.

\subsection{Linear-Quadratic Regulator}
We now numerically evaluate the performance of Algorithm \ref{algo_ac} for solving a variant of the Linear-Quadratic Regular (LQR) stochastic control problem. Specifically, we investigate the HJB equation
\begin{align}\label{lqr}
\begin{split}
    \inf_{a \in \R^d}\curlbrac{\sum_{i=1}^d\brac{\partial_i^2 V(x) (1 + \epsilon x_ia_i)^2 + \xi\partial_iV(x)a_i} + q\nor{a}^2 + \tilde{f}(x)} - \gamma V(x) = 0, \,\,\,\, x &\in \Omega \\
    V(x) = kR^2, \,\,\,\, x &\in \partial\Omega 
\end{split}
\end{align}
where $\Omega = B(0,R) \subset \R^d$, $A = \R^d$, $p,q,\xi, R$ are positive constants, $\epsilon$ is a constant of either sign, $$k = \frac{\sqrt{q^2\gamma^2 + 4pq\xi^2 - \gamma q}}{2\xi^2},$$ and $$\tilde{f}(x) = \gamma k \nor{x}^2 + \sum_{i=1}^d \frac{k^2(\xi + 2\epsilon)^2x_i^2}{q+2k\epsilon^2x_i^2} - 2kd.$$ This HJB equation arises from the controlled SDE
\begin{equation}\label{lqr_sde}
    dX_t = \xi u_t dt + \Phi(X_t,u_t) dW_t
\end{equation}
where $\Phi(x,a)$ is a diagonal matrix with $i$-th element $\sqrt{2}(1+\epsilon x_ia_i)$, and the cost functional to minimize is
\begin{equation*}
    V^u(x) = \E\brac{\int_0^\tau q\nor{u_t}^2+\tilde{f}(X_t)dt + kR^2e^{-\gamma\tau} \Big\rvert X_0 = x}.
\end{equation*}
The choice of $k$ is such that the true value function is simply $$V(x) = k\nor{x}^2,$$ and the corresponding optimal control is
\begin{equation*}
    u_i^*(x) = \frac{-x_ik(\xi+2\epsilon)}{q + 2k\epsilon^2x_i^2}.
\end{equation*}
We have reproduced this problem exactly from \cite{zhou2021actor}, allowing for a direct comparison against their numerical method. That the given $V$ and $u^*$ are indeed the value function and optimal control can be verified by treating the pair as an ansatz. Note that the HJB equation \eqref{lqr} is fully nonlinear for any $\epsilon \neq 0$, making it a somewhat challenging example problem. We tackle it using a PyTorch implementation of Algorithm \ref{algo_ac}. The code is publicly available,\footnote{\url{https://github.com/JacksonHebner/Actor-Critic-Method-for-HJB-Code}} compatible with CUDA, and relatively computationally light.

The parameters are set as $p = q = \xi = \gamma = R = 1$ and $\epsilon = -1$, the same as in \cite{zhou2021actor}. We deviate slightly from the literal description of Algorithm \ref{algo_ac} in that we fix $\psi^N = \mathrm{Id}_{\R}$ and use Adam (rather than stochastic gradient descent) as our optimizer. The learning schedulers $\alpha_t, \omega_t$ are such that the critic trains for 100 steps, after which the actor trains for 200, and then the process repeats a total of 50 times. The learning rates are proportional to $\frac{1}{1+\mathfrak{n}}$, where $\mathfrak{n}$ is the number of times this process has repeated. Each individual training step involves computing the integrals in \eqref{theta_gradient} and \eqref{phi_gradient} via Monte Carlo with 3,000 randomly sampled points.

We first demonstrate the algorithm for the control problem in 10, 20, 50, 100, and 200 dimensions when sampling training points uniformly in $B(0,R)$, i.e.~$\mu = \mathrm{Leb}\rvert_{B(0,R)}$. The actor and critic neural networks each have 512 neurons in their single hidden layer. We use two metrics to evaluate training performance against the known analytic solution. The first is mean square error, which is simply
\begin{align*}
    \textrm{MSE}_K(Q, V) &= \frac{1}{K}\sum_{j=1}^K(Q(x_j) - V(x_j))^2, \\
    \textrm{MSE}_K(U, u^*) &= \frac{1}{K}\sum_{j=1}^K \nor{U(x_j) - q(x_j)}^2,
\end{align*}
for $K  = 10^6$ points in $B(0,R)$ sampled from $\mu$ independent of training. The second is relative error, which we define as
\begin{align*}
    \textrm{RE}_K(Q, V) &= \frac{\sum_{j=1}^K(Q(x_j) - V(x_j))^2}{\sum_{j=1}^KV(x_j)^2}, \\
    \textrm{RE}_K(U, u^*) &= \frac{\sum_{j=1}^K\nor{U(x_j) - u^*(x_j)}^2}{\sum_{j=1}^K\nor{u^*(x_j)}^2}
\end{align*}
for the same $K$ points.

\begin{table}[]
\centering
\begin{tabular}{@{}llllll@{}}
\toprule
Dimension                & 10 & 20 & 50 & 100 & 200 \\ \midrule
Critic mean square error & $2.37 \times 10^{-7}$   & $2.41\times 10^{-7}$   & $1.26 \times 10^{-7}$ & $4.67 \times 10^{-8}$ & $1.38 \times 10^{-8}$ \\
Critic relative error    & $8.69 \times 10^{-7}$ & $7.57\times 10^{-7}$   & $3.56 \times 10^{-7}$ & $1.27 \times 10^{-7}$ & $3.70 \times 10^{-8}$  \\
Actor mean square error  & $1.28 \times 10^{-4}$ & $9.71 \times 10^{-4}$   &  $1.79 \times 10^{-3}$ & $2.76 \times 10^{-3}$ & $5.97 \times 10^{-3}$ \\
Actor relative error     & $6.12 \times 10^{-4}$   &  $3.63\times 10^{-3}$  &  $5.53 \times 10^{-3}$ & $7.89 \times 10^{-3}$ & $1.64 \times 10^{-2}$ \\ \bottomrule
\end{tabular}
\caption{The performance of Algorithm \ref{algo_ac} on the LQR problem in various dimensions. The hyperparameters are constant across dimension.} \label{data_full_accuracy}
\end{table}

\begin{table}[]
\centering
\begin{tabular}{@{}llllll@{}}
\toprule
Dimension, Method              & 10d, this paper & 10d, \cite{zhou2021actor} & 20d, this paper & 20d, \cite{zhou2021actor} \\ \midrule
Critic mean square error & $2.37 \times 10^{-7}$   & Not provided   & $2.41 \times 10^{-7}$ & Not provided \\
Critic relative error    & $8.69 \times 10^{-7}$ & $2.25\times 10^{-4}$   & $7.57 \times 10^{-7}$ & $3.84 \times 10^{-4}$ \\
Actor mean square error  & $1.28 \times 10^{-4}$ & Not provided   &  $9.71 \times 10^{-4}$ & Not provided \\
Actor relative error     & $6.12 \times 10^{-4}$ & $2.45\times 10^{-3}$  &  $3.63 \times 10^{-3}$ & $2.76 \times 10^{-3}$ \\ \bottomrule
\end{tabular}
\caption{A comparison of the method in this paper against the method in \cite{zhou2021actor} in 10 and 20 dimensions.} \label{data_Duke_comparison}
\end{table}
The overall training performance, shown in Table \ref{data_full_accuracy}, is remarkable, especially for the critic. The actor is of course at a natural disadvantage when trying to learn an $n$-dimensional function compared to the critic which learns only one dimension. Note that the relative and mean square error metrics are both likely affected by the fact that the ``average" point in the unit ball is increasingly far away from the origin in high-dimensional space. The critic tends to perform better near the boundary (where the critic value is fixed to obey a known condition), so this may explain the otherwise counterintuitive drop in critic mean square error as dimension increases. The actor has no such special help near the boundary. Further, the actor is much more likely to be inherently limited in high dimensions by an irreducible neural network representation error (representing an $\mathbb{R}^{200} \to \mathbb{R}^{200}$ function by a network with only 512 hidden neurons is difficult). Nonetheless, we keep the neural network hyperparameters constant across Table \ref{data_full_accuracy} so as to give the most straightforward set of results.

Table \ref{data_Duke_comparison} compares the performance of Algorithm \ref{algo_ac} against that of the method in \cite{zhou2021actor} whence we have reproduced this example problem. Algorithm \ref{algo_ac} gives far more accurate solutions. This is especially impressive because \cite{zhou2021actor} uses deeper neural networks (three hidden layers, each of 200 neurons) compared to ours (one hidden layer of 512 neurons), resulting in many more parameters to train, and trains these parameters longer ($\sim 3\times$ more training epochs).

\begin{remark}[Pathwise sampling]
We add that Algorithm \ref{algo_ac} can be easily modified to sample training points from the controlled SDE \eqref{lqr_sde} rather than from a fixed measure $\mu$. This technique would, of course, violate the assumptions that lead to the limiting ODE dynamics \eqref{limit_ODE}, but could still be of practical use when solving high-dimensional problems. Specifically, if we are only interested in solving the stochastic control problem when starting from a specific region $\Omega_{\mathrm{start}} \subsetneq \Omega$ of the domain, we could fix a number of controlled SDE paths that start in $\Omega_{\mathrm{start}}$ and are controlled by the current actor. In the case of our LQR problem, this would give a collection of $K$ particles $\{X_t^k\}_{k=1}^K$, each with dynamics
\begin{equation*}
        dX^k_t = \xi U_\theta^N(X^k_t) dt + \Phi(X_t,U_\theta^N(X^k_t)) dW_t, \quad X_0^k \in \Omega_{\mathrm{start}}.
\end{equation*}
These $K$ particles can then be used as points for estimating $L_\mathrm{critic}(\phi; \theta)$ and $L_\mathrm{actor}(\theta;\phi)$ by numerical integration. Whenever a particle exits the domain, we may restart it again at a point in $\Omega_{\mathrm{start}}.$

In many stochastic control problems, large areas of the domain $\Omega$ are almost never encountered by the controlled SDE when the control is reasonably close to optimal and the path is initiated far away. This SDE sampling technique avoids spending computation time training the actor and critic in those regions. However, by implementing this computational optimization, we limit the ability of the neural network to learn robustly and add potential pitfalls. For example, if early in training the critic outputs erroneously large values in a certain region of the domain, then the actor will learn to discourage the particles from visiting there, and thus the algorithm is unlikely to learn that the high critic values were erroneous in the first place.
\end{remark}

\subsection{Constructed control problems}

The above section has shown that Algorithm \ref{algo_ac} is effective at solving linear-quadratic regulator problems. While we have chosen a difficult LQR example (one where the actor is high-dimensional and the HJB equation is fully nonlinear), we remark that the linear-quadratic framework is still overall an easy and conveniently structured family of stochastic control problems. Thus, to more rigorously test the algorithm, we construct here several control problems at various grades of difficulty with closed-form solutions. More specifically, we fix a target value function and optimal control and reverse engineer an HJB equation with the desired behavior that is solved by them. This allows us to test situations in which some difficult properties are present. For example, we can construct the Hamiltonian so that it has many pointwise local minima that are not global minima (that is to say, violates Assumption \ref{convex_hamiltonian}).

Recall the HJB equation under consideration is
\begin{align}\label{hjb-again}
    \begin{split}
    \inf_{a \in A} \Big \{ \sum_{i}b_i(x,a)\partial_i V(x)+\frac{1}{2}\sum_{i,j}{\Phi\Phi^\intercal}_{ij}(x,a)\partial^2_{ij} V(x) + c(x,a) \Big\}- \gamma V(x)=0,\,\,\,\, &x  \in \Omega,\\
    V(x)=g(x),\,\,\,\, &x \in \partial\Omega.
    \end{split}
\end{align}
Suppose we select a true value function $V \in C^2(\Omega) \cap C^0(\overline{\Omega})$, an optimal feedback control $u^* : \Omega \to A$, a drift $b : \Omega \times A \to \mathbb{R}^d$, and a diffusion $\Phi: \Omega \times A \to \mathbb{R}^{d\times d'}$. We then see that the cost function $c : \Omega \times A \to \mathbb{R}$ given by
\begin{align}
    c(x,a) := \zeta(x,a) + \gamma V(x) - \sum_i b_i(x,a)\partial_iV(x) - \frac{1}{2}\sum_{i,j}\Phi\Phi^\intercal_{ij}(x,a) \partial_{ij}^2V(x),
\end{align}
where $\zeta: \Omega \times A \to [0,+\infty)$ is such that $\zeta(x,a) = 0$ if and only if $a = u^*(x)$, solves \eqref{hjb-again}. Then, by the verification theorem, $V(x)$ is the value of the optimally-controlled diffusion\footnote{To be rigorous, we must check that the technical assumptions of the verification theorem are satisfied. This is almost always the case, though, since $\Omega$ is compact. For example, the value function $V$ is of quadratic growth (and in fact bounded) due to being continuous on a compact set. Similarly, $X^{u^*}_t$ has a unique strong solution for a very wide range of $u^*, b, \Phi$, in part due to the compactness of $\Omega$. To show that $\lim_{t\to\infty} e^{-\gamma t}\E[V(X_t^{u^*})] = 0$, it is sufficient for $\gamma > 0$.}
\begin{equation}
    dX^{u^*}_t = b(X_t, u^*(X_t^{u^*}))dt + \Phi(X_t, u^*(X_t^{u^*}))dW_t, \quad X^{u^*}_0 = x.
\end{equation}
That is to say,
\begin{align*}
    V(x) &= \inf_{u \in \mathcal{U}}\E \brac{\int_0^{\tau_u} c(X_s^u, u_s) e^{-\gamma s} ds + g(X_{\tau_u}^u)e^{-\gamma\tau_u} \Big\rvert X_0^u = x} \\
    &= \E \brac{\int_0^{\tau_{u^*}} c(X_s^{u^*}, u^*_s) e^{-\gamma s} ds + g(X_{\tau_{u^*}}^{u^*})e^{-\gamma\tau_u^*} \Big\rvert X_0^{u^*} = x}.
\end{align*}

This is a very general framework and that naturally accommodates every stochastic control problem in the form of \eqref{hjb-again} which admits a classical solution $V \in C^2(\Omega) \cap C^0(\overline \Omega)$ and has a unique optimal contol $u^*$.\footnote{We remark that this can be trivially extended to finite time horizon problems by replacing the $\gamma V(x)$ term in the definition of $c(x,a)$ with $-\frac{\partial}{\partial t}V(t,x)$.} For example, one can verify that the linear-quadratic regulator problem considered previously can be recreated by fixing the same functions $V, \Phi, b$ and setting $$\zeta(x,a) := q\nor{a}^2 + \inprod{2kx}{a} + \sum_{i=1}^d \paren{2kd(1+\epsilon x_ia_i)^2 + \frac{k^2(\xi+2\epsilon)^2}{q+2k\epsilon^2x_i^2}} - 2kd,$$ which is indeed non-negative and has the property that $\zeta(x,a) = 0$ if and only if $a = u^*(x).$

For all of the problems below, the boundary condition $g : \partial \Omega \to \R$ is constant and so we select $\overline{g} : \overline{\Omega} \to \R$ to be constant as well. For ball domains $\Omega = B(0,R)$, we set $\eta(x) = R^2 - \nor{x}^2$, and for square domains $\Omega = [-R,R]^d$, we set $\eta(x) = \prod_{i=1}^d (R^2-x_i^2).$ The reference measure is always $\mu = \mathrm{Leb}\rvert_{\Omega}$. Learning occurs for 4000 steps, where actor and critic learning alternate after each step and are at rates
\begin{equation*}
    \alpha_t \propto \frac{1}{10 + t^{0.8}} \quad \text{and} \quad \omega_t\propto \frac{1}{10+t^{0.6}}.
\end{equation*}
Like before, each neural network has a single hidden layer with 512 neurons.

\subsubsection{Problem 1 (easy test case)}

We consider the setup where the domain is $\Omega = B(0,1) \subset \R^{10}$, the action space is $A = \R^{10},$ the dimension of the Brownian motion is $d' = 10$, the discounting rate is $\gamma = 1$, and
\begin{align*}
    V(x) &= \exp\paren{-\nor{x}^2},\\
    u^*(x) &= x, \\
    b_i(x,a) &= a_i x_i, \\
    \Phi(x,a) &= \mathrm{Id_{10\times 10}},\\
    \zeta(x,a) &= \nor{x - a}_{L^1} + \nor{x - a}_{L^2}^2.
\end{align*}
Algorithm \ref{algo_ac} learns well under these circumstances. Within 2000 training epochs, the mean square critic error stabilizes at about $7.8 \times 10^{-5}$ and the mean square actor error at about $5.3 \times 10^{-4}.$ These are likely very close to the irreducible neural network representation errors for this problem (again, we are using a single-layer network with 512 hidden neurons).

This ability to learn scales well to higher dimensions. Table \ref{easy_table} shows the algorithm's performance in 10, 20, 50, and 100 dimensions. The actor and critic loss curves in higher dimensions look almost identical to those in Figure \ref{easy}, with rapid and stable actor convergence and noisy critic convergence that is initially non-monotonic.

\begin{figure}
\centering
\begin{subfigure}[b]{0.49\textwidth}\centering
   \includegraphics[width = \linewidth]{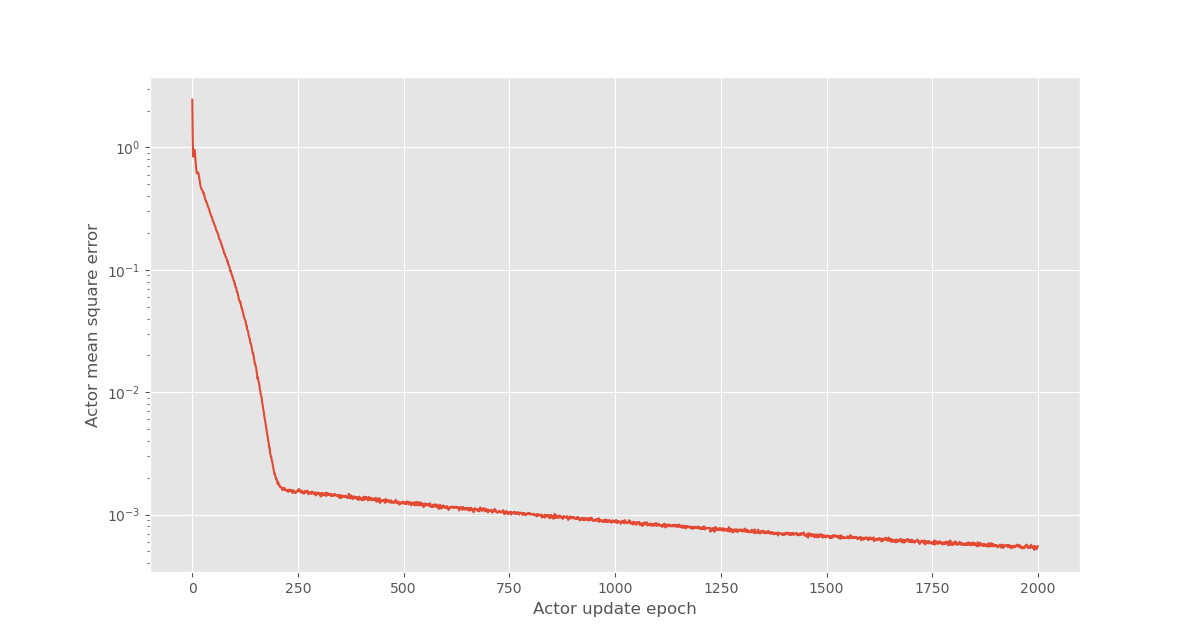}
   \caption{Actor mean square error}
\end{subfigure}%
\begin{subfigure}[b]{0.49\textwidth}\centering
   \includegraphics[width=\linewidth]{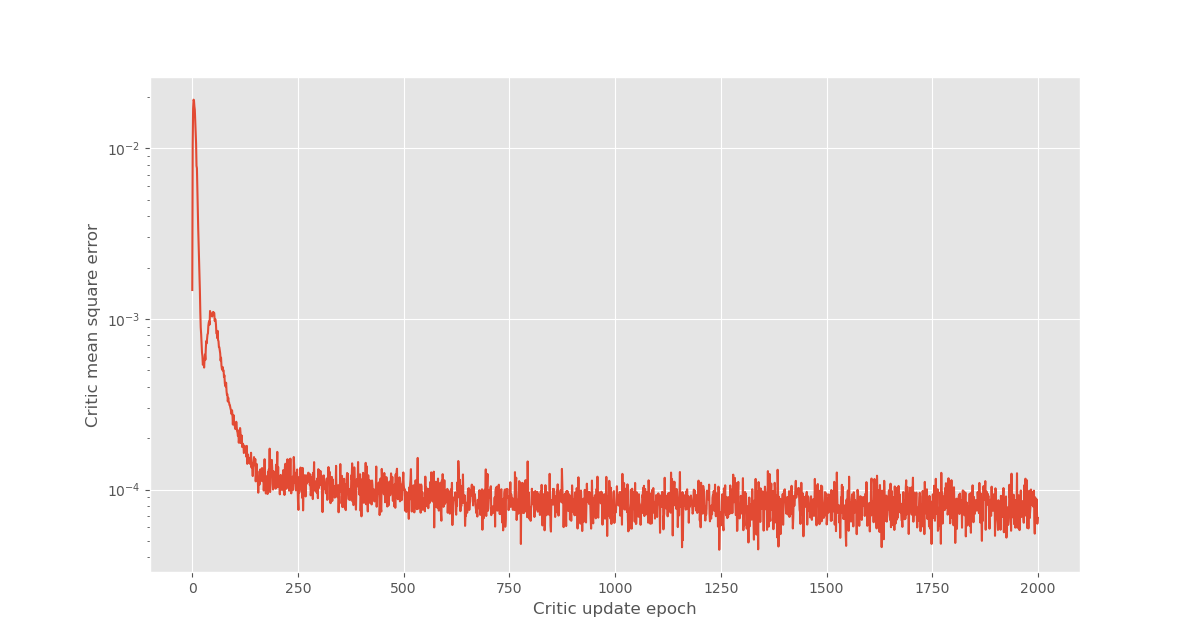}
   \caption{Critic mean square error}
\end{subfigure}
\caption{The actor and critic approach their true values  quickly in Problem 1.}
\label{easy}
\end{figure}

\begin{table}[]\
\centering
\begin{tabular}{@{}lllll@{}}
\toprule
Dimension                & 10 & 20 & 50 & 100 \\ \midrule
Critic mean square error & $7.8\times 10^{-5}$  & $1.1 \times 10^{-5}$  & $1.2 \times 10^{-6}$  & $4.1 \times 10^{-7}$ \\
Critic relative error  & $4.0 \times 10^{-4}$  & $6.9 \times 10^{-5}$  & $7.9\times 10^{-6}$  & $3.1 \times 10^{-6}$ \\
Actor mean square error  & $5.3 \times 10^{-4}$ & $2.0 \times 10^{-3} $& $7.2 \times 10^{-3}$ & $1.7 \times 10^{-2}$ \\
Actor relative error  & $6.4 \times 10^{-4}$  & $2.2 \times 10^{-3}$  & $7.4 \times 10^{-3}$  & $1.7 \times 10^{-2}$
\\ \bottomrule
\end{tabular}
\caption{Critic and actor mean square errors for Problem 1 in various dimensions}
\label{easy_table}
\end{table}

\subsubsection{Problem 2A (non-convex Hamiltonian)}
We consider the setup where the domain is $\Omega = B(0,1)\subset \R^{10}$, the action space is $A = \R$, the dimension of the Brownian motion is $d' = 1$, the discounting rate is $\gamma = 1$, and
\begin{align*}
    V(x) &= \sin\paren{\nor{x}^2},\\
    u^*(x) &= \log\sum_{i}\exp(x_i), \\
    b_i(x,a) &= a\sin(x_i), \\
    \Phi_i(x,a) &= 1 + a^2 + x_i^2, \\
    \zeta(x,a) &= \log\cosh(a-u^*(x)).
\end{align*}

The main difficulty in this problem comes the fact that the pointwise Hamiltonian does not satisfy Assumption \ref{convex_hamiltonian}. Notice that there exist critic functions $Q_\phi^{N^*} \in C^2(\Omega)$ and points $x \in \Omega$ such that
\begin{align*}
    H(\cdot, Q_\phi^{N^*})(x) = \inprod{b(x,\cdot)}{\nabla Q_\phi^{N^*}(x) - \nabla V(x)} + \frac{1}{2}\mathrm{Tr}(\Phi\Phi^\intercal(x,\cdot)\mathrm{Hess}(Q_\phi^{N^*}(x)-V(x)))+ \zeta(x, \cdot)
\end{align*}
is not quasi-convex as a map $A\to\R$ at each $x\in \Omega$, and in particular is not necessarily minimized if the first derivative is 0. Thus, Theorem \ref{fixed_point_verification} may not hold, and there exists the possibility of ``false" fixed points in the training landscape. This means that the actor-critic pair may converge to a solution far away from the true optimal control-value function pair and get stuck there, never learning the true solution to the control problem.

In fact, this is exactly what happens. The critic converges quickly, and typically to something close to the true value $V$, but the actor gets stuck at a local minimum far away from the true optimal control. In our runs, the critic mean square error usually stabilizes at around $4.81 \times 10^{-4}$ and the actor mean square error at around $3.35$. If we replace $\zeta$ with $$\zeta^*(x,a) = 100\log\cosh(a-u^*(x)),$$ then the fixed point is closer to the true solution (with an actor MSE of $0.126$, RE $2.31 \times 10^{-3}$, and critic MSE of $2.10\times 10^{-4}$, RE $3.80 \times 10^{-4}$), but is still clearly distinct from the true optimal control. Further, as can be seen in Figure \ref{100zeta}, the convergence to this fixed point is heavily non-monotonic. Even being extremely close to the true solution (as the actor-critic pair is for Problem 2A at about epoch 105) is evidently no guarantee that the actor-critic pair will not eventually converge towards a false solution much farther away if Assumption \ref{convex_hamiltonian} is violated. While we only display the first 300 epochs of training in Figure \ref{100zeta} so as to emphasize the early oscillations, we remark that the learned solutions past epoch 300 are stable for thousands more epochs. Further, these oscillations appear in many hyperparameter regimes and without regard to the choice of stochastic gradient descent or Adam as an optimizer. For ease of understanding, we also include in Figure \ref{100zeta} the loss functions
\begin{align*}
    L_\mathrm{actor}(\theta; \phi) &= \int_\Omega H(U_\theta^N, Q_\phi^{N^*})
    (x)d\mu(x), \\
    L_\mathrm{critic}(\phi; \theta) &= \int_\Omega \modu{\mathcal{L}^{U_\theta^N} Q_\phi^{N^*}(x)}^2 d\mu(x).
\end{align*}

\begin{figure}
\centering
\begin{subfigure}{0.49\textwidth}
   \includegraphics[width=1\linewidth]{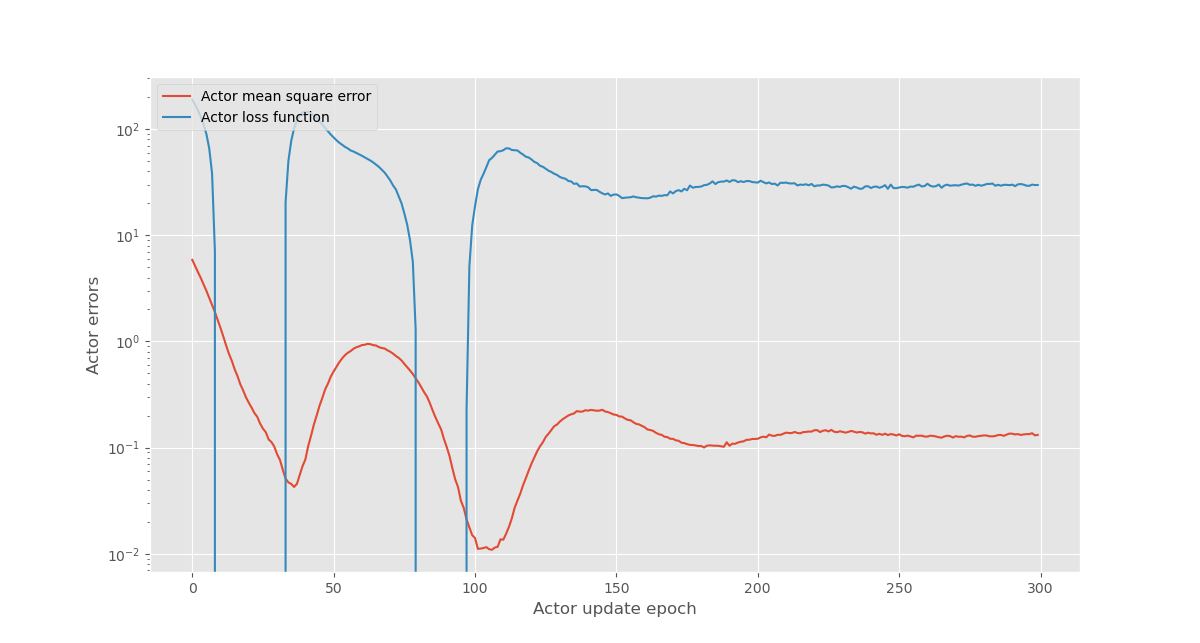}
   \caption{The actor comes close to the optimal control before being drawn to and caught upon a stable local minimum}
\end{subfigure}
\begin{subfigure}{0.49\textwidth}
   \includegraphics[width=1\linewidth]{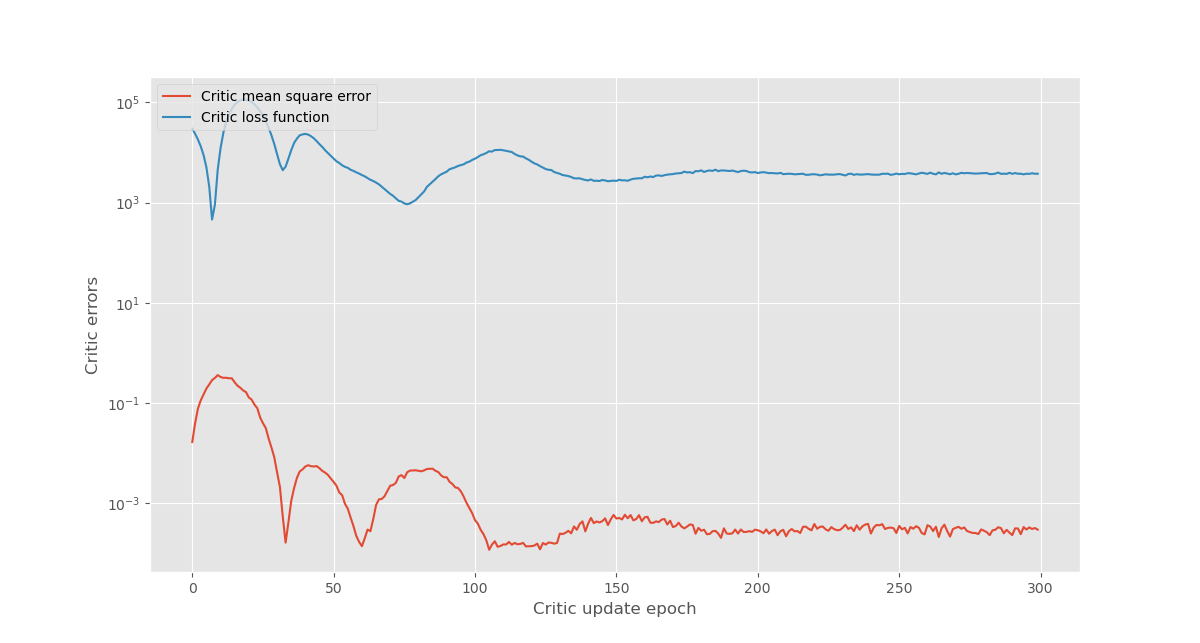}
   \caption{The critic accuracy also oscillates, though less than the actor}
\end{subfigure}

\caption{Problem 2 when $\zeta$ is replaced with $\zeta^*(x,a) = 100\log \cosh(a - u^*(x))$ and $\Omega = B(0,1)$.}
\label{100zeta}
\end{figure}

\subsubsection{Problem 2B (convex Hamiltonian)}
This is the same problem as above, but with $\Phi_i(x,a) = 1 + x_i^2$. That is to say, the domain is $\Omega = B(0,1)\subset \R^{10}$, the action space is $A = \R$, the dimension of the Brownian motion is $d' = 1$, the discounting rate is $\gamma = 1$, and
\begin{align*}
    V(x) &= \sin\paren{\nor{x}^2},\\
    u^*(x) &= \log\sum_{i}\exp(x_i), \\
    b_i(x,a) &= a\sin(x_i), \\
    \Phi_i(x,a) &= 1 + x_i^2, \\
    \zeta(x,a) &= \log\cosh(a-u^*(x)).
\end{align*}
With this change, Algorithm \ref{algo_ac} learns very well. The critic mean square error stabilizes at around $1.26 \times 10^{-4}$ (relative error $2.25 \times 10^{-4}$) and the actor mean square error at around $2.59 \times 10^{-4}$ (relative error $4.71 \times 10^{-5}$). The convergence to this solution is rapid and stable. The reason for this drastic increase in performance is the newfound convexity of the Hamiltonian in the control. Notice that since $b(x,\cdot)$ is affine and $\Phi\Phi^\intercal(x,\cdot)$ is constant for each $x\in\Omega$, the function
\begin{align*}
    H(\cdot, Q_\phi^{N^*})(x) = \inprod{b(x,\cdot)}{\nabla Q_\phi^{N^*}(x) - \nabla V(x)} + \frac{1}{2}\mathrm{Tr}(\Phi\Phi^\intercal(x,\cdot)\mathrm{Hess}(Q_\phi^{N^*}(x)-V(x)))+ \zeta(x, \cdot)
\end{align*}
is convex as a map $A\to\R$ at each $x\in \Omega$ for any critic function $Q_\phi^{N^*} \in C^2(\Omega).$ Thus, Assumption \ref{convex_hamiltonian} and the resulting Theorem \ref{fixed_point_verification} hold. There are no ``false" fixed points in the training landscape, and thus no possibility of the actor and critic converging towards a poor solution as in Problem 2A. In Figure \ref{problem_2B}, we see a steady, nearly monotonic convergence to the sole fixed point in the training landscape (which is, of course, the true solution $(V, u^*)$).

\begin{figure}
\centering
\begin{subfigure}{0.49\textwidth}
   \includegraphics[width=1\linewidth]{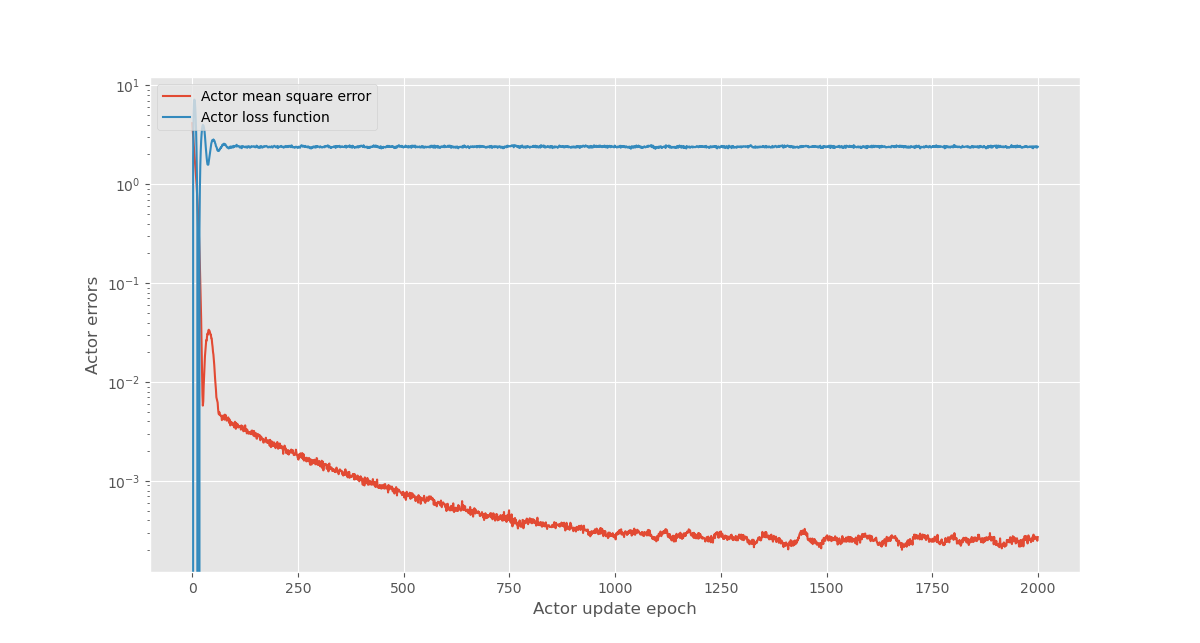}
   \caption{Actor mean square error}
\end{subfigure}
\begin{subfigure}{0.49\textwidth}
   \includegraphics[width=1\linewidth]{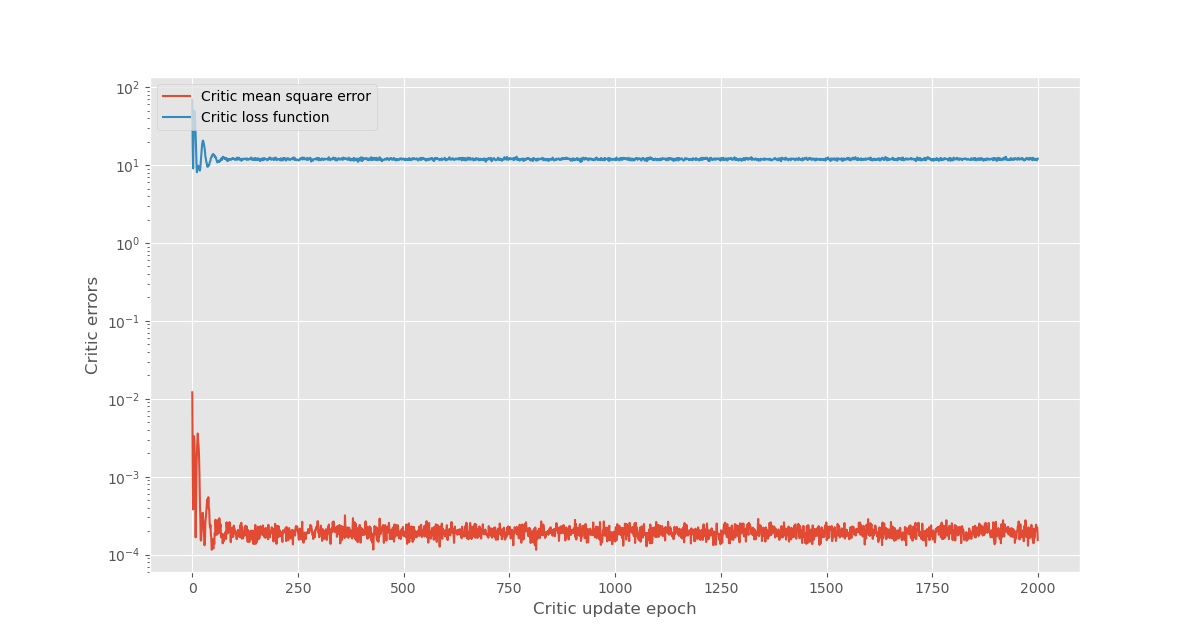}
   \caption{Critic mean square error}
\end{subfigure}
\caption{Convergence is monotonic for Problem 2B}
\label{problem_2B}
\end{figure}

\subsubsection{Problem 3 (high sensitivity of the drift coefficient to the control)}
We consider the setup where the domain is $\Omega = B(0,1)\subset \R^{10}$, the action space is $A = [-1000,1000]^3$, the dimension of the Brownian motion is $d' = 10$, the discounting rate is $\gamma = 1$, and
\begin{align*}
    V(x) &= 2 \nor{x}^4 - \nor{x}^2,\\
    u^*(x) &= (\tanh(x_1), \sinh(x_2), \cosh(x_3)), \\
    b_i(x,a) &= x_i(e^{a_1}+e^{a_2}+e^{a_3}) + e^{-x_i}, \\
    \Phi(x,a) &= \mathrm{Id}_{10\times10}, \\
    \zeta(x,a) &= (a_1 - u^*_1(x))^2 + (a_2 - u^*_2(x))^4 + (a_3 - u^*_3(x))^6.
\end{align*}
Heuristically, we set $A = [-1000,1000]^3$ rather than $A = \R^3$ to ensure that the hypotheses of the verification theorem are satisfied while changing the problem as little as possible. To implement this, we apply a simple clamp function element-wise to the actor neural network output. We expect \textit{a priori} some difficulty due to the extreme sensitivity of the drift coefficient $b_i(x,a)$ to the action $a$.

Indeed, training behavior is highly inconsistent. With our standard initialization, the critic network will usually converge to a function which gives a mean square error on the order of $10^{-2}$. However, while the actor network will sometimes convergence to a function which gives a mean square error on the order of $10^{-1}$, most of the time its outputs blow up to the boundary of $A = [-1000,1000]^3$ -- presumably, they would go to infinity if we were to set $A = \R^3$. This blowup corresponds with the estimated integral-Hamiltonian $L_{\textrm{actor}}(\theta; \phi)$ going to (seemingly) negative infinity and the critic loss function $L_{\mathrm{critic}}(\phi;\theta)$ being dragged to (seemingly) positive infinity. If we reduce the learning rates, this fate is often avoided, but the learned solutions are nevertheless not impressive. Representative error curves of the actor-critic blowup under standard learning rates and convergence under reduced learning rates are provided in the first two regimes of Figure \ref{ReLU_P3}.

A theoretical explanation for the exploding actor values is readily available. Recall that the actor loss term is the integral of the estimated Hamiltonian, i.e.
\begin{align*}
    L_\mathrm{actor}(\theta; \phi) &= \int_\Omega H(U_\theta^N, Q_\phi^{N^*})(x)d\mu(x) \\
    &= \int_\Omega \paren{\sum_{i}b_i(x,U_\theta^N(x))\partial_i Q_\phi^{N^*}(x)+\frac{1}{2}\sum_{i,j}{\Phi\Phi^\intercal}_{ij}(x,U_\theta^N(x))\partial^2_{ij} Q_\phi^{N^*} + c(x,U_\theta^N(x))} d\mu(x).
\end{align*}
The motivation for minimizing this term is that the true value function $V$ and optimal control $u^*$ satisfy
\begin{align*}
    H(u^*, V)(x) = \inf_{u \in \mathcal{U}} H(u,V)(x) = \gamma V(x)
\end{align*}
for all $x \in \Omega$, and so it is reasonable to expect lower Hamiltonian values to correspond with better controls. However, it is possible that $Q_\phi^{N^*}$ and $V$ differ in such a way that the above Hamiltonian has a minimum less than $\gamma V(x)$, or even no finite minimum, for many or all $x \in \Omega$. For example, in the above problem, the Hamiltonian reduces to
\begin{align*}
    H(U_\theta^N, Q_\phi^{N^*})(x) = \inprod{b(x,U_\theta^{N}(x))}{\nabla Q_\phi^{N^*}(x) - \nabla V(x)} + \frac{1}{2}\mathrm{Tr}(\Phi\Phi^\intercal\mathrm{Hess}(Q_\phi^{N^*}(x)-V(x)))+ \zeta(x, U_\theta^N(x)).
\end{align*}
In the specific case of Problem 3, one can show that the actor optimizer's task is equivalent to minimizing
$$ \int_{\Omega} \paren{e^{(U_\theta^N)_1(x)}+e^{(U_\theta^N)_2(x)}+e^{(U_\theta^N)_3(x)}}\sum_{i=1}^{10} x_i \partial_i (Q_\phi^{N^*}-V)(x) + \zeta(x,U_\theta^N(x)) d\mu(x).$$
Naturally, then, the actor's optimizer might tend towards increasing one or more of the coordinates of $U_\theta^N(x)$ at any $x \in \Omega$ such that $$\sum_{i=1}^{10}x_i \partial_i(Q_{\phi}^{N^*}-V)(x) < 0.$$ The only factor encouraging $U_{\theta}^N(x)$ to stay close to $u^*(x)$ is the $\zeta(x, U_{\theta}^N(x))$ penalty term. But this is merely polynomial in the actor function and $e^{(U_\theta^N)_1(x)}+e^{(U_\theta^N)_2(x)}+e^{(U_\theta^N)_3(x)}$ is exponential. It is thus possible for the actor to be encouraged to wander off to infinity (or the boundary of the action space) for at least some $x \in \Omega$. Unsurprisingly, the first coordinate of the actor, which is pulled towards the true optimal control only by a quadratic term, usually grows in error faster than the third coordinate of the actor, which is pulled towards the true optimal control by a sextic term.

One solution to this numerical issue is to redefine the actor loss function as
\begin{align}\label{modified_loss}
    L_{\mathrm{actor}}(\theta; \phi) = \int_{\Omega} \max\paren{H(U_\theta^N,Q_\phi^{N^*})(x) - \gamma Q_\phi^{N^*}(x), \delta} d\mu(x)
\end{align}
for some $\delta \leq 0$. In essence, this trick makes the actor agnostic to minimizing the Hamiltonian too far beyond what we should expect it to be if the critic were roughly accurate. In the above problem, if we choose $\delta = -10$, then Algorithm \ref{algo_ac} trains well, obtaining a critic mean square error of around $3.51 \times 10^{-3}$ an actor mean square error of around $2.28 \times 10^{-2}.$

For a comparison of the typical results under the standard hyperparameters, the reduced learning rate hyperparameters, and the modified Hamiltonian, see Table \ref{P3_table} and Figure \ref{ReLU_P3}.

\begin{table}[]\
\centering
\begin{tabular}{@{}lllll@{}}
\toprule
Regime                & Standard & Reduced learning rates & Modified actor loss \\ \midrule
Critic mean square error & $7.94 \times 10^{-3}$  & $0.218$  & $3.51 \times 10^{-3}$ \\
Critic relative error  & $1.85 \times 10^{-2}$  & $0.502$  & $7.84\times 10^{-3}$  \\
Actor mean square error  & $5.78 \times 10^3$ & $1.10 $& $2.28 \times 10^{-2}$ \\
Actor relative error  & $4.61 \times 10^3$  & $0.880$  & $1.83 \times 10^{-2}$
\\ \bottomrule
\end{tabular}
\caption{Critic and actor mean square errors for Problem 3 in three regimes}
\label{P3_table}
\end{table}

\begin{figure}
\centering
\begin{subfigure}{0.49\textwidth}
   \includegraphics[width=1\linewidth]{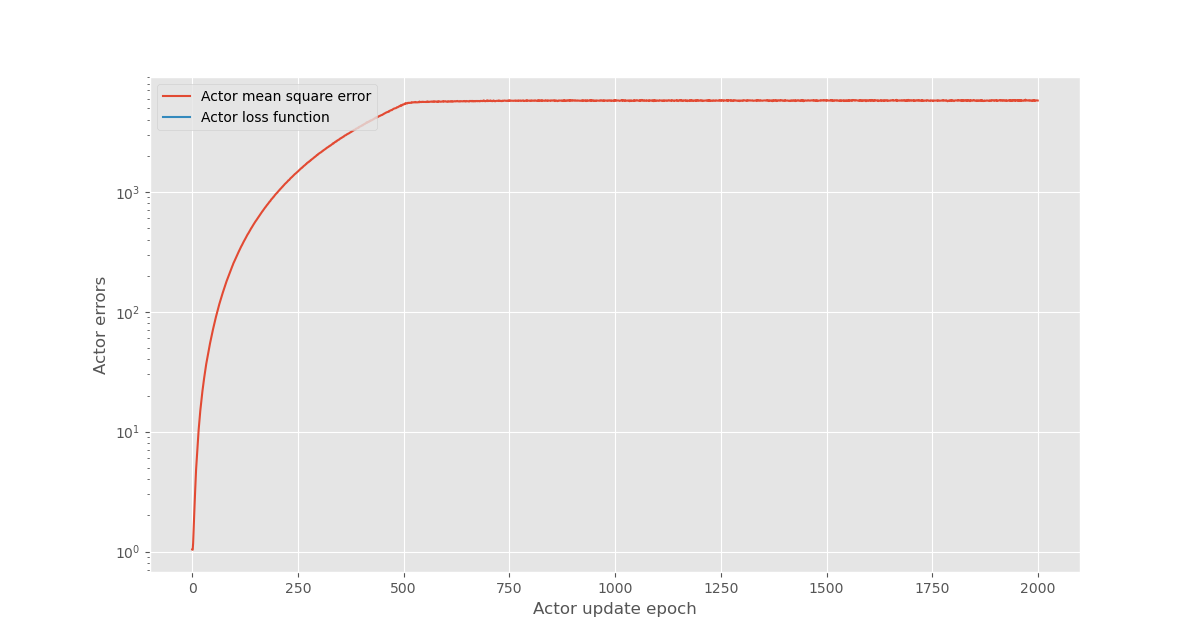}
   \caption{Actor mean square error in the standard regime}
\end{subfigure}
\begin{subfigure}{0.49\textwidth}
   \includegraphics[width=1\linewidth]{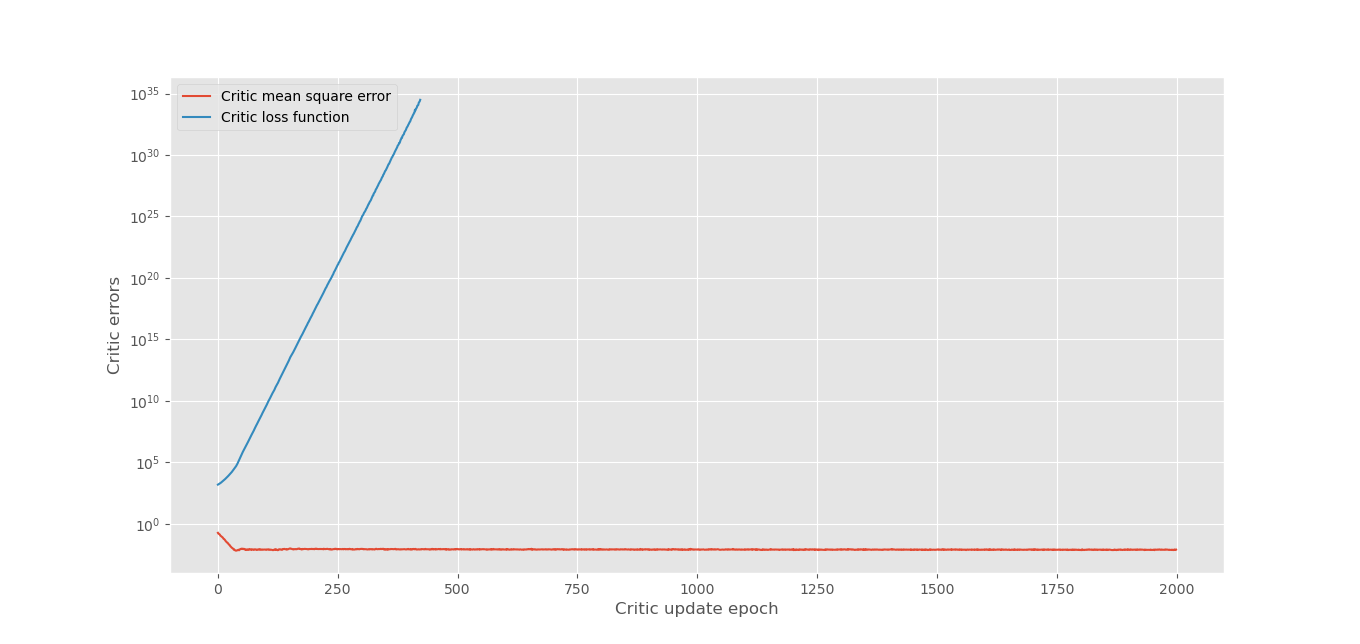}
   \caption{Critic mean square error in the standard regime}
\end{subfigure}

\begin{subfigure}{0.49\textwidth}
   \includegraphics[width=1\linewidth]{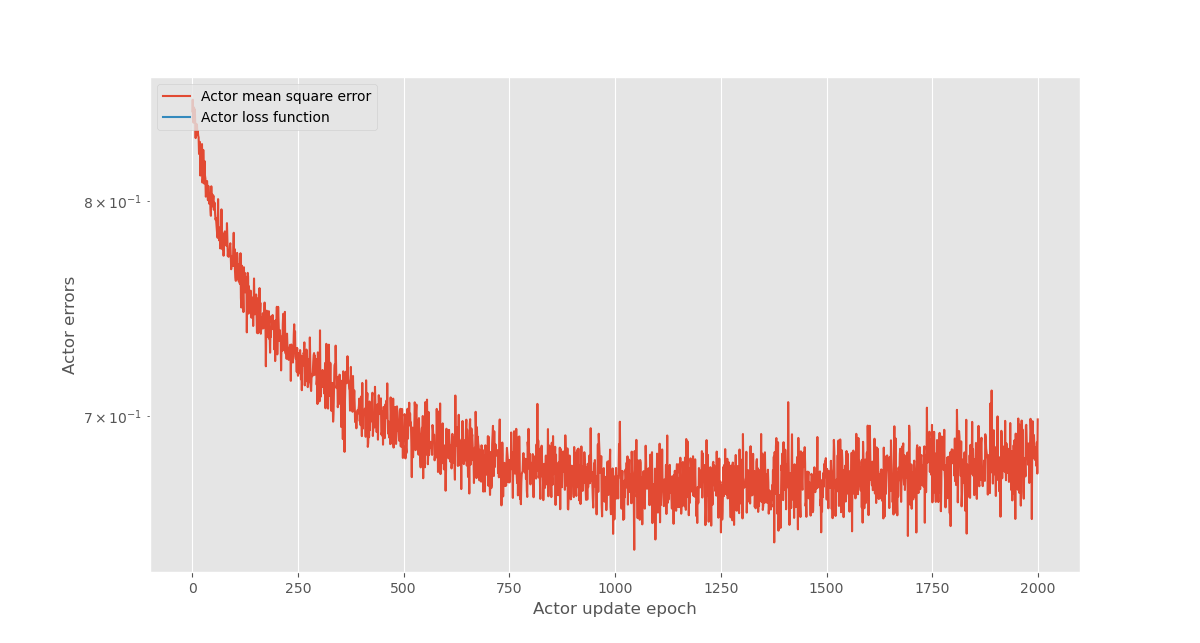}
   \caption{Actor mean square error with reduced learning rates}
\end{subfigure}
\begin{subfigure}{0.49\textwidth}
   \includegraphics[width=1\linewidth]{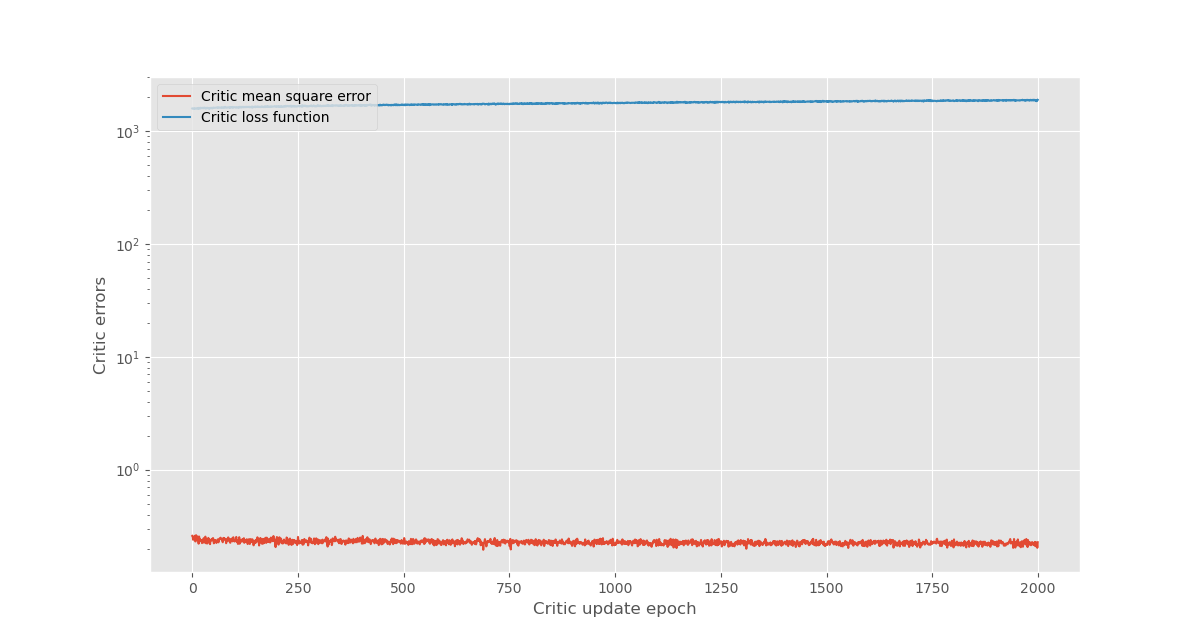}
   \caption{Critic mean square error with reduced learning rates}
\end{subfigure}

\begin{subfigure}{0.49\textwidth}
   \includegraphics[width=1\linewidth]{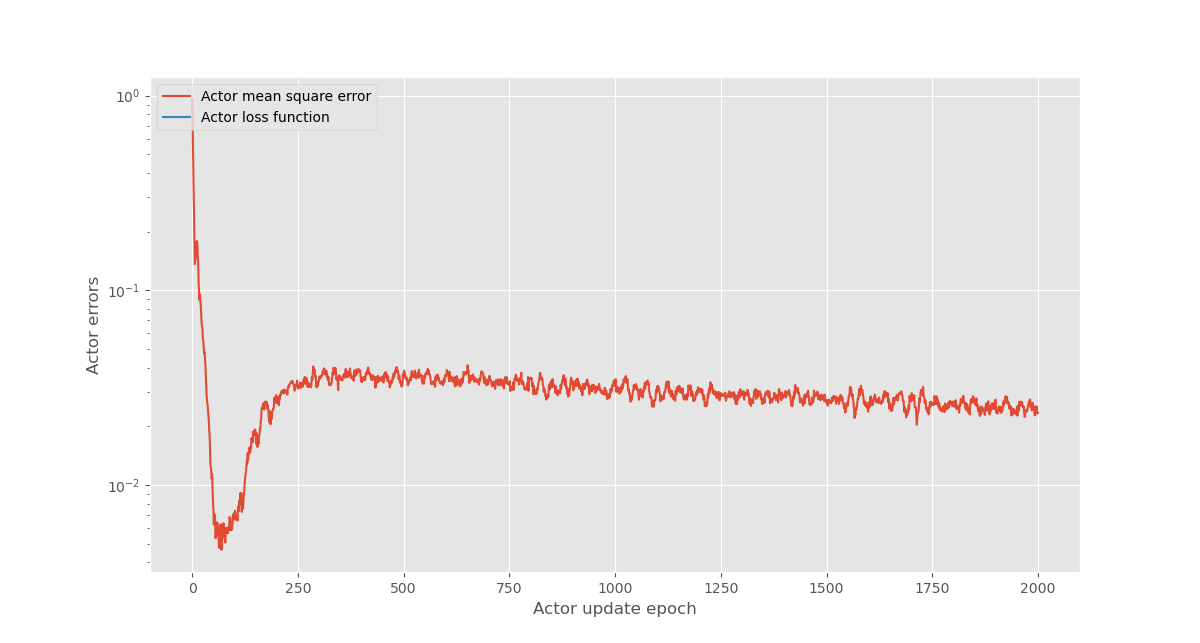}
   \caption{Actor mean square error with $L_{\mathrm{actor}}(\theta;\phi)$ defined as in \eqref{modified_loss}}
\end{subfigure}
\begin{subfigure}{0.49\textwidth}
   \includegraphics[width=1\linewidth]{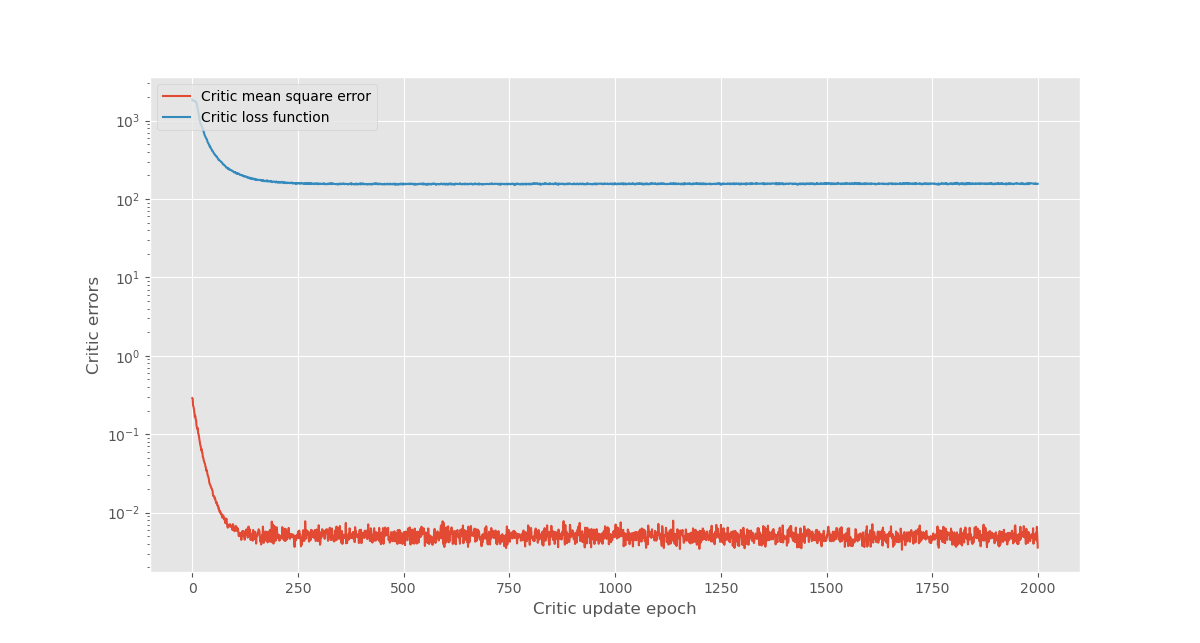}
   \caption{Critic mean square error with $L_{\mathrm{actor}}(\theta;\phi)$ defined as in \eqref{modified_loss}}
\end{subfigure}

\caption{Performance on Problem 3 in three distinct regimes}
\label{ReLU_P3}
\end{figure}

\subsubsection{Problem 4}
We consider the setup where the domain is $\Omega = [1,-1]^{10}$, the action space is $A = \R^{10}$, the dimension of the Brownian motion is $d' = 10$, the discounting rate is $\gamma = 1$, and
\begin{align*}
    V(x) &= 1 + \prod_{i=1}^{10}\paren{1-\sin\paren{\frac{\pi x_i^2}{2}}^2},\\
    u^*_i(x) &= x_i \paren{1 + \prod_{j=1}^{10}x_j}, \\
    b_i(x,a) &= a_i x_i + \nor{x}^2, \\
    \Phi(x,a) & = \mathrm{Id}_{10\times 10} \paren{1 + \frac{\nor{a}^2}{10}},\\
    \zeta(x,a) &= \nor{a - u^*(x)}^2.
\end{align*}

Algorithm \ref{algo_ac} learns well in this case. The actor quickly converges, giving a mean square error of about $2.06 \times 10^{-2}$ and relative error of about $6.31 \times 10^{-3}$. The critic convergence, while noisy, is even better -- a mean square error of about $1.79 \times 10^{-5}$ and relative error of about $1.78 \times 10^{-5}.$

This is especially impressive given the complicated forms of the value function $V$ and optimal control $u^*$, the fact that the diffusion coefficient $\Phi$ is controlled, and the modest choice of $\zeta$, which is the term responsible for making the optimal control optimal.

\begin{figure}
\centering
\begin{subfigure}{0.49\textwidth}
   \includegraphics[width=1\linewidth]{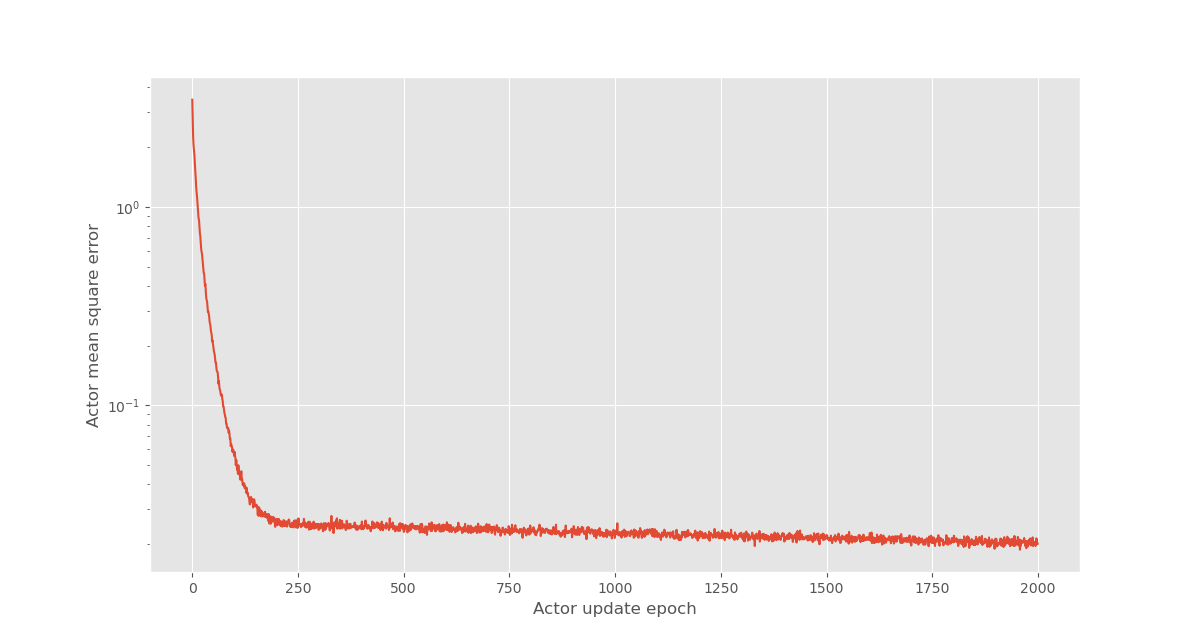}
   \caption{Actor mean square error}
\end{subfigure}
\begin{subfigure}{0.49\textwidth}
   \includegraphics[width=1\linewidth]{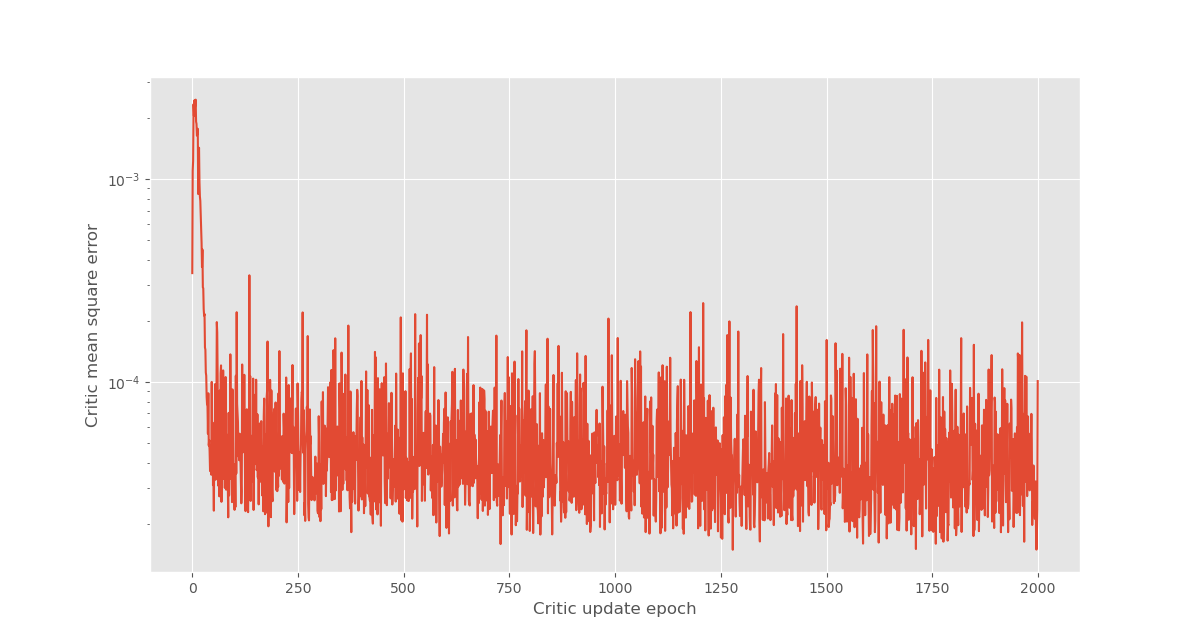}
   \caption{Critic mean square error}
\end{subfigure}
\caption{Convergence is rapid for Problem 4}
\label{problem_4}
\end{figure}

\subsubsection{Problem 5}
We consider the setup where the domain is $\Omega = [1,-1]^{10}$, the action space is $A = \R$, the dimension of the Brownian motion is $d' = 10$, the discounting rate is $\gamma = 1$, and
\begin{align*}
    V(x) &= 1 + \nor{x}^2\prod_{i=1}^{10} \sin(\pi x_i), \\
    u^*(x) &= x_1\sin(\pi x_3) + x_2, \\
    b_i(x,a) &= x_ia_i, \\
    \Phi(x,a) &= \textrm{Id}_{10\times10}, \\
    \zeta(x,a) &= (a-u^*(x))^2.
\end{align*}
Convergence is extremely noisy, as can be seen in Figure \ref{problem_5}. Nonetheless, Algorithm \ref{algo_ac} learns decently in this scenario. The actor mean square error converges to about $7.3 \times 10^{-2}$ and relative error to about $0.15$. The critic mean square error and relative error both converge to around $6.0 \times 10^{-3}$.

\begin{figure}
\centering
\begin{subfigure}{0.49\textwidth}
   \includegraphics[width=1\linewidth]{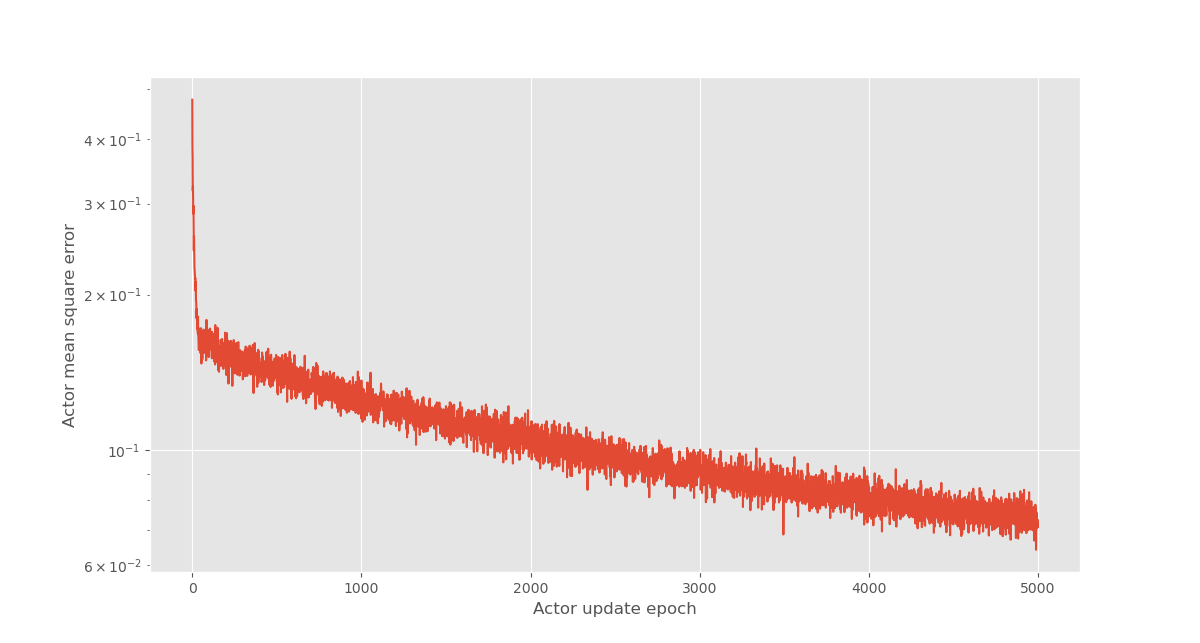}
   \caption{Actor mean square error}
\end{subfigure}
\begin{subfigure}{0.49\textwidth}
   \includegraphics[width=1\linewidth]{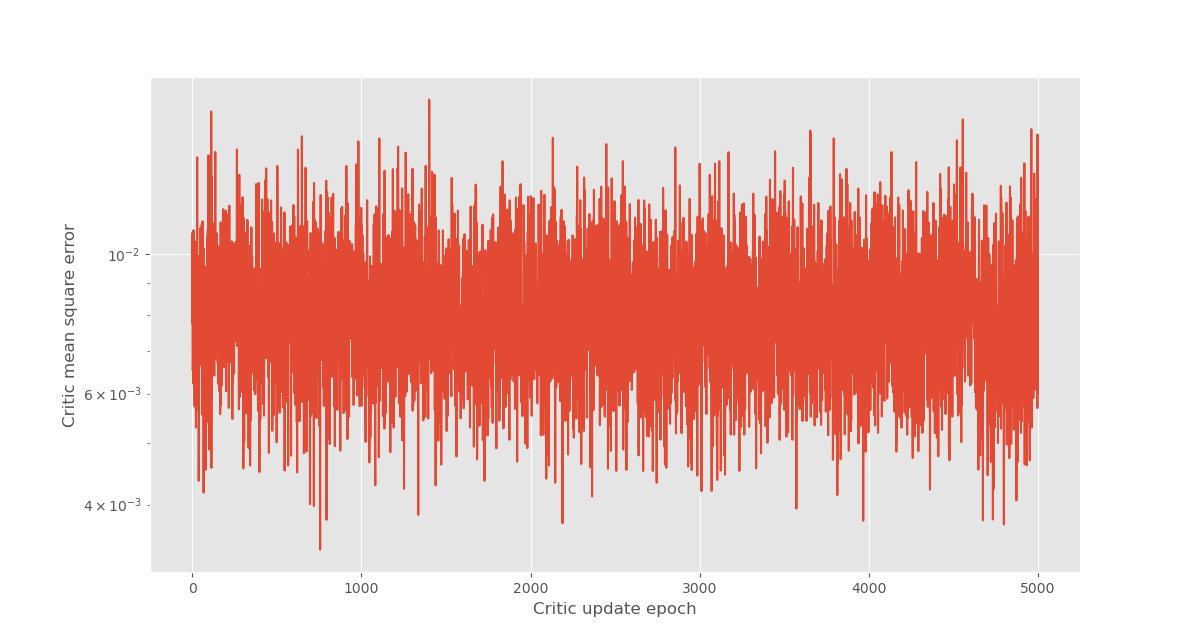}
   \caption{Critic mean square error}
\end{subfigure}
\caption{Convergency is noisy for Problem 5}
\label{problem_5}
\end{figure}

\subsection{Merton problems}
We now demonstrate using Algorithm \ref{algo_ac} to solve an $n$-asset Merton problems. Here, the market contains $n$ risky assets $S_t = (S_{t})_{i=1}^{n}$ that evolve according to a (correlated) geometric Brownian motion and a deterministic asset $B_t$ representing risk-free interest:
\begin{align}
    dS_t/S_t &= \mu dt + \sigma dW_t, & S_0 &= (S_{0,i})_{i=1}^n \\
    dB_t &= rB_tdt,  & B_0 &= b_0.
\end{align}
Here, $\sigma \in M^{n\times n}$ is a constant matrix such that the covariance matrix $\sigma\sigma^\intercal \in S_{>0}^{n\times n}$ is positive definite and $\mu \in \R^n$ is a constant vector. We will use the constant absolute risk aversion (CARA) utility function
\begin{equation*}
    U_p(x) = -e^{-px}
\end{equation*}
with risk aversion parameter $p > 0$.

\subsubsection{Finite-horizon terminal wealth optimization}
Let the portfolio $\pi_t = (\pi_{t,i})_{i=1}^n \in \R^n$ represent dollar investments in the $n$ risky assets. The dynamics of a self-financing portfolio $X_t$ then follow
\begin{equation}
    dX_t= (rX_t + \inprod{\pi_t}{\mu - r\mathbf{1}_n})dt + \inprod{\pi_t}{\sigma dW_t}, \quad X_0 = x,
\end{equation}
where $\pi_t$ serves as our control to maximize the value functional
\begin{equation*}
    J(t,x) = \sup_{\pi_t} \E\brac{U_p(X_T) \mid X_t = x}.
\end{equation*}
This gives rise to the HJB equation
\begin{align}\label{eq_merton_hjb}\begin{split}
    0 &= V_t(t,x) + \sup_{\pi \in \R^n} \curlbrac{(rx + \inprod{\pi}{\mu - r\mathbf{1}_n})V_x(t,x) + \frac{1}{2}\pi^\intercal\sigma^\intercal\sigma\pi V_{xx}(t,x)} \\
    V(T,x) &= U_p(x). \end{split}
\end{align}
The analytic solution is\footnote{The solution given here can be verified by direct substitution. For more general cases, see e.g.~\cite[Ch.~3, Thr.~6.3]{karatzas1998}.}
\begin{align}\label{merton_solution}\begin{split}
    V(t,x) &= -\exp\paren{-pe^{r(T-t)}x- \frac{T-t}{2} (\mu - r\mathbf{1}_n)^\intercal(\sigma\sigma^\intercal)^{-1} (\mu - r\mathbf{1}_n)} \\
    u^*(t,x) = \pi^*(t,x) &= \frac{1}{p}e^{-r(T-t)}(\sigma\sigma^\intercal)^{-1}(\mu - r\mathbf{1}_n).\end{split}
\end{align}
While the PDE domain closure $\overline\Omega = [0,T] \times [0,+\infty)$ is not compact (and we do not have \textit{a priori} the value $V$ at the boundary $T = 0$), we can still construct a critic neural network in the spirit of Assumptions \ref{assumption_eta}, \ref{assumption_g} and \eqref{q_def} with
\begin{align*}
    \overline{g}(t,x) &= U_p(x) \\
    \eta(t,x) &= T-t. 
\end{align*}
Notice that using the above functions in \eqref{q_def} enforces the boundary condition in \eqref{eq_merton_hjb} while still allowing the critic flexibility to learn any $C^{1,2}$ function on the interior. To train the actor and critic networks and evaluate their accuracy, we sample points from the reference probability measure
\begin{equation}
    \mu = \mathrm{Unif}([0,T]) \times \mathrm{Gamma}(\alpha = 1, \lambda = 1),
\end{equation}
which has full support over $\Omega$. The actor and critic networks each have 512 hidden neurons, like with the previous examples. We use an Adam optimizer for 10,000 steps with the learning rates
\begin{equation*}
    \alpha_t = \frac{0.01}{10+t^{0.8}} \quad \text{and} \quad \omega_t =  \frac{0.01}{10+t^{0.6}},
\end{equation*}
alternating between actor and critic training after each training step.

\paragraph{Synthetic parameters} We begin with results for an easy setup with synthetic parameters. Table \ref{merton_simple_data} contains the results for $n = 10, 20, 50, 100$ when
\begin{equation}\label{merton_simple}
    \mu = 0.1 \times \mathbf{1}_n, \quad \sigma = 0.2 \times \mathrm{Id}_{n\times n}, \quad r = 0.05, \quad T = \frac{1}{12}, \quad p = 1.
\end{equation}
That is to say, the market consists of $n$ independent stocks and the agent is investing to maximize CARA utility in one month's time.

Algorithm \ref{algo_ac} learns the optimal control and PDE solution well, as shown in Table \ref{merton_simple_data}.

\begin{table}[]
\centering
\begin{tabular}{@{}llllll@{}}
\toprule
Dimension $(n)$                & 10 & 20 & 50 & 100 & 200 \\ \midrule
Critic mean square error & $6.13 \times 10^{-9}$   & $2.07\times 10^{-8}$   & $1.60 \times 10^{-6}$ & $1.68 \times 10^{-5}$ & $2.47 \times 10^{-4}$ \\
Critic relative error    & $1.81 \times 10^{-8}$ & $6.27\times 10^{-8}$   & $5.23 \times 10^{-6}$ & $6.20 \times 10^{-5}$ & $1.14 \times 10^{-3}$  \\
Actor mean square error  & $7.92 \times 10^{-4}$ & $2.41 \times 10^{-3}$   &  $3.98 \times 10^{-2}$ & $0.192$ & $1.32$ \\
Actor relative error     & $5.05 \times 10^{-5}$   &  $7.70\times 10^{-5}$  &  $5.08 \times 10^{-4}$ & $1.22 \times 10^{-3}$ & $4.21 \times 10^{-3}$ \\ \bottomrule
\end{tabular}
\caption{Algorithm \ref{algo_ac} on the classical Merton problem with synthetic parameters \eqref{merton_simple}} \label{merton_simple_data}
\end{table}

\paragraph{Calibrated parameters} We now fix $n = 100$ and use real 5 years of real data to estimate the mean $\mu$ and covariance $\sigma\sigma^\intercal$ of the 100 stocks in the S\&P 100 as of February 9, 2026\footnote{More precisely, we estimate the mean and covariance of the daily log-returns. The parameters are then annualized and $\mu$ is calculated from the log-return mean and covariance.}. This is a more difficult problem than before due to the poor conditioning of the matrix $\sigma\sigma^\intercal$ compared to the synthetic parameters \eqref{merton_simple}. Of course, this data is affected by  selection and survivorship bias as well as other complicating factors present in the model, which limit how well we can expect these values to generalize to future data; We introduce this problem more as an example of the algorithm's performance with ill-conditioned covariance matrices than as a \textit{per se} robust financial model.

Algorithm \ref{algo_ac} performs well even in this context, with Table \ref{merton_real_data} showing low relative error for both the critic and actor. 

\begin{table}[]
\centering
\begin{tabular}{@{}lll@{}}
\toprule
Parameters                & Synthetic ($n = 100$) & Calibrated ($n = 100$) \\ \midrule
Critic mean square error & $1.68 \times 10^{-5}$   & $4.02\times 10^{-4}$ \\
Critic relative error    & $6.20 \times 10^{-5}$ & $2.09\times 10^{-3}$ \\
Actor mean square error  & $0.192$ & $2.29$ \\
Actor relative error     & $1.22 \times 10^{-3}$   &  $5.92\times 10^{-3}$ \\ \bottomrule
\end{tabular}
\caption{Algorithm \ref{algo_ac} on the classical Merton problem with synthetic parameters \eqref{merton_simple} and parameters calibrated to 5 years of data for S\&P 100 stocks} \label{merton_real_data}
\end{table}

\paragraph{Logarithmic utility}
We remark that if we instead employ the logarithmic utility $U_{\log} (x) = \log x$, then the PDE solution and optimal control are
\begin{align*}
    V(t,x) &= \log x + \paren{r + \frac{1}{2}(\mu - r\mathbf{1}_n)^\intercal(\sigma\sigma^\intercal)^{-1} (\mu - r\mathbf{1}_n)}(T-t) \\
    u^*(t,x) = \pi^*(t,x) &= (\sigma\sigma^\intercal)^{-1}(\mu - r\mathbf{1}_n)x.
\end{align*}
In this case, one must be careful when choosing the training measure $\mu$ to use in Algorithm \ref{algo_ac}. In particular, the asymptote at $x = 0$ means that we must choose a measure with low density in that region. For example, with $n = 50$ and parameters as in \eqref{merton_simple}, the training measure
\begin{equation*}
    \mu = \mathrm{Unif}([0,T]) \times \mathrm{Gamma}(\alpha = 2, \lambda =1)
\end{equation*}
gives critic relative error of $5.04\times 10^{-5}$ and actor relative error of $5.45\times10^{-2}$. However, if we select
\begin{equation*}
    \mu = \mathrm{Unif}([0,T]) \times \mathrm{Gamma}(\alpha = 1, \lambda =1),
\end{equation*}
then the critic relative error is 22.2 and the actor relative error is $0.980$.

\paragraph{Monte Carlo benchmarking}

We have demonstrated that Algorithm \ref{algo_ac} learns the optimal control accurately in the norm of the action space $A = \R^n$. However, one might suspect that actor error metrics are unreliable because small differences in the control may, in certain circumstances, lead to substantial changes in the value functional. To test this, we run 100,000 Monte Carlo simulations of the wealth process of an agent who follows the analytic optimal control $\pi^*(t,x)$ and an agent who follows the learned control in the same market and then evaluate the difference in mean CARA utility at time $T = 1/12$. That is to say, we sample 100,000 random initial wealth values from $\mathrm{Gamma}(1,1)$, for each initial wealth sample one market path, calculate the corresponding final utility obtained by following each control, and then take averages.

Table \ref{cara_simulated_paths} shows close alignment in mean CARA utility between the simulated wealth process under the theoretically optimal control and the learned control.

\begin{table}[]
\centering
\begin{tabular}{@{}lll@{}}
\toprule
Merton model                & Synthetic parameters ($n = 100$) & Calibrated parameters ($n = 100$) \\ \midrule
Mean CARA with trained actor & $-0.3866$ & $-0.2707$ \\
Mean CARA with analytic optimal control & $-0.3864$ & $-0.2688$ \\
\bottomrule
\end{tabular}
\caption{Mean terminal CARA utility with learned control and optimal control in the classical Merton problem} \label{cara_simulated_paths}
\end{table}

\subsubsection{Finite-horizon terminal wealth optimization without short selling}
We wish to emphasize the versatility of Algorithm \ref{algo_ac} by making modifications to the Merton problem. For example, the classical Merton problem relies on the the control taking values in $A = \R^n$, i.e.~the agent having access to infinite margin for short selling. This assumption is unrealistic, especially for smaller, non-institutional investors. One might therefore be interested in solving the problem with controls constrained to $A = [0,+\infty)^n$. In such case, the agent can still borrow infinite capital at an interest rate $r$, but is unable to short sell stocks. At the level of code, this change can be easily implemented by parameterizing the portfolio as the element-wise exponential of the actor, i.e.~$\pi(t,x) = \exp(U^N_{\theta}(t,x))$, or as the clipped actor, i.e.~$\pi(t,x) = \mathrm{ReLU}(U^N_{\theta}(t,x))$. The latter of the two is the approach we take.

For our synthetic parameters \eqref{merton_simple}, forbidding short selling changes little because the optimal control is non-negative in each stock anyway. However, this is not the case for the parameters calibrated to real data, and so we use those for testing.

There is no simple closed-form solution for the HJB equation and optimal control. Thus, to measure performance, we compare mean terminal CARA utility between the learned control and several reference controls using the same benchmark technique as in the previous section. The reference controls are:
\begin{itemize}
    \item The clipped control $\pi_{\mathrm{clip}}(t,x) = \mathrm{ReLU}(\pi^*(t,x))$, where $\pi^*(t,x)$, given in \eqref{merton_solution}, is the optimal solution when short selling is allowed. This corresponds to naively implementing the optimal control in the classical case except borrowing at interest to cover money that would otherwise have been obtained by shorting stocks.
    \item The zero control $\pi_{\mathrm{zero}} \equiv \mathbf{0}_n$, which corresponds to leaving all wealth in the riskless asset.
    \item The uniform control $\pi_{\mathrm{unif}}(t,x) = \frac{1}{nx}\mathbf{1}_n$, which corresponds to investing all available wealth equally in the $n$ risky assets without borrowing.
    \item The constrained optimization control $\pi_{\mathrm{\lambda}}$, which comes from solving the optimization problem
    \begin{equation*}
        \pi_\lambda \equiv \arg \max_{\pi \in [0,+\infty)^n} \curlbrac{\inprod{\pi}{\mu - r\mathbf{1}_n} - \lambda \pi^\intercal\sigma^\intercal\sigma\pi}.
    \end{equation*}
    for some $\lambda > 0$ using gradient descent. This estimate is inspired by the observation that $V_x(t,x) > 0$ and $V_{xx}(t,x) < 0$ for all $(t,x) \in \R^2$, and so the above optimization bears resemblance to
    \begin{equation*}
        \sup_{\pi \in [0,+\infty)^n} \curlbrac{\inprod{\pi}{\mu - r\mathbf{1}_n}V_x(t,x) + \frac{1}{2}\pi^\intercal\sigma^\intercal\sigma\pi V_{xx}(t,x)}.
    \end{equation*}
    Substituting the solution \eqref{merton_solution} to the unconstrained problem as an approximation, we see that we should expect $\lambda \approx -\frac{V_{xx}(t,x)}{2V_x(t,x)} \approx \frac{p}{2} = \frac{1}{2}$ to give the optimal control (though it is not quite constant in time).
\end{itemize}

Table \ref{cara_no_short_simulated_paths} shows that the learned control performs very well, although is still marginally inferior to the constrained optimization control $\pi_\lambda$ when $\lambda = 0.5$, at which parameter it is very near to the theoretical optimal control. We note that the extremely low CARA value $-53.62$ is not a typo, but rather results from aggressive borrowing at interest when naively adapting the optimal control from when short selling was available.

\begin{table}[]
\centering
\begin{tabular}{@{}llllllllll@{}}
\toprule
Control                & Learned control & $\pi_{\mathrm{clip}}$ & $\pi_{\mathrm{zero}}$ & $\pi_{\mathrm{unif}}$ & $\pi_{\lambda =0.25}$ & $\pi_{\lambda =0.5}$ & $\pi_{\lambda =1}$ &\\ \midrule
Mean CARA & $-0.4352$ & $-53.62$ & $-0.4998$ & $-0.4980$ & $-0.5002$ & $-0.4317$ & $-0.4477$\\
\bottomrule
\end{tabular}
\caption{Mean terminal CARA utility for several controls with no short selling under parameters calibrated to real data} \label{cara_no_short_simulated_paths}
\end{table}

\subsubsection{Finite-horizon terminal wealth optimization with different interest rates}

We now consider another variant of the Merton problem where the agent earns a lower interest rate $r_{\mathrm{lend}}$ in the risk-free asset than the $r_{\mathrm{borrow}}$ he suffers when borrowing. To model this, the SDE dynamics change to
\begin{equation}
    dX_t= (\inprod{\pi_t}{\mu} + r_{\mathrm{lend}}(X_t - \inprod{\pi_t}{\mathbf{1}_n})_+ - r_{\mathrm{borrow}}(X_t - \inprod{\pi_t}{\mathbf{1}_n})_-)dt + \inprod{\pi_t}{\sigma dW_t}, \quad X_0 = x,
\end{equation}
and thus the HJB equation is
\begin{align}\label{eq_merton_hjb_diff_rates}\begin{split}
    -V_t(t,x) &= \sup_{\pi \in \R^n} \curlbrac{\paren{\inprod{\pi_t}{\mu} + r_{\mathrm{lend}}(X_t - \inprod{\pi_t}{\mathbf{1}_n})_+ - r_{\mathrm{borrow}}(X_t - \inprod{\pi_t}{\mathbf{1}_n})_-}V_x(t,x) + \frac{1}{2}\pi^\intercal\sigma^\intercal\sigma\pi V_{xx}(t,x)} \\
    V(T,x) &= U_p(x). \end{split}
\end{align}

Like with the previous example, this problem admits no general closed-form solution. We thus benchmark performance using the Monte Carlo technique described before. The reference controls are $\pi^*(t,x)$ from \eqref{merton_solution}, representing the optimal control when the lending and borrowing rates are the same, $\pi_\mathrm{unif}$, representing investing all wealth equally across the risky assets, and $\pi_\text{zero}$, representing investing all wealth in the riskless asset.

Table \ref{cara_different_rates_simulated_paths} shows that Algorithm \ref{algo_ac} outperforms all three of these approaches.

\begin{table}[]
\centering
\begin{tabular}{@{}lllll@{}}
\toprule
Control                & Learned control & $\pi^*(t,x)$ in \eqref{merton_solution} & $\pi_{\mathrm{unif}}$ & $\pi_{\text{zero}}$ \\ \midrule
Mean CARA & $-0.2781$ & $-0.2789$ & $-0.4980$ & $-0.4998$ \\
\bottomrule
\end{tabular}
\caption{Mean terminal CARA utility for three controls with $r_\text{lend} = 0.05$ and $r_{\text{borrow}} = 0.10$ under calibrated parameters. Notice that the learned control beats naively applying $\pi^*(t,x)$ from \eqref{merton_solution}, which is here calcualted using $r = r_{\text{lend}} = 0.05.$ } \label{cara_different_rates_simulated_paths}
\end{table}

\subsection{Short-term optimal liquidation}

We now analyze a short-term optimal liquidation model which is in essence a relaxed variant of that in \cite{Schied28092010}. Here, $X \in [0,+\infty)$ represents the cash an agent has, $Q \in [0,+\infty)^n$ represents the number of shares he owns in $n$ different assets, $S \in [0,+\infty)^n$ represents the prices of those assets, and $u \in [0,+\infty)^n$ represents the agent's rate of selling them. The underlying dynamics are
\begin{align*}
    dS_t &= (\mu-\Gamma u_t) dt + \sigma dW_t\\
    dX_t &= \inprod{u_t}{S_t} dt - \inprod{u_t}{\Lambda u_t}dt\\
    dQ_t &= -u_tdt,
\end{align*}
where $\sigma,\Gamma,\Lambda \in S^{n\times n}_{\geq0}$ are positive semi-definite (representing volatility, permanent price impact, and temporary price impact, respectively) and $\mu \in \R^n$. The agent's objective is to maximize the $p$-CARA final utility
\begin{equation*}
    J(t,x,q,s) = \E[U_p(X_T + \inprod{Q_T}{S_T} - \inprod{Q_T}{\Psi Q_T}) \mid X_t = x, Q_t = q, S_t = s].
\end{equation*}
Here, the positive semi-definite matrix $\Psi \in S_{\geq0}^{n\times n}$ determines a (potentially large) penalty the agent incurs for retaining any assets at time $T$.

The dimensionality of the system can be reduced by defining the net-worth process
\begin{equation*}
    Y_t = X_t + \inprod{Q_t}{S_t}.
\end{equation*}
Then using that $Q_t$ is monotone and therefore of finite variation, we have the dynamics
\begin{align*}
    dY_t &= dX_t + \inprod{Q_t}{dS_t} + \inprod{S_t}{dQ_t} \\
    &= \inprod{u_t}{S_t} dt - \inprod{u_t}{\Lambda u_t}dt + \inprod{Q_t}{\mu - \Gamma u_t}dt + \inprod{Q_t}{\sigma dW_t} + \inprod{S_t}{-u_t}dt \\
    &= (-\inprod{u_t}{\Lambda u_t} + \inprod{Q_t}{\mu - \Gamma u_t})dt + \inprod{Q_t}{\sigma dW_t},
\end{align*}
and the objective can be written as
\begin{equation}
    J(t,y,q) = \E[U_p(Y_T - \inprod{Q_T}{\Psi Q_T}) \mid Y_t = y, Q_t = q].
\end{equation}
We optimize over the set of feedback controls that involve selling slower than linearly given the remaining amount of shares. That is to say, each $u \in \mathcal{U}_{\mathrm{SL}}$ obeys the bounds
\begin{equation*}
    0 \leq u_i(t,y,q ) \leq\frac{q_i}{T-t}, \quad i \in \{1,2, \dots, n\}.
\end{equation*}
The corresponding HJB equation on $\overline \Omega = [0,T] \times [0,+\infty) \times [0,+\infty)^n$ is then (omitting arguments for brevity)
\begin{align}\begin{split}
    0 &= V_t(t,y,q) + \sup_{u\in\mathcal{U}_{\mathrm{SL}}}\curlbrac{(-\inprod{u}{\Lambda u} +\inprod{q}{\mu - \Gamma u})V_y - \inprod{u}{\nabla_q V} + \frac{1}{2}\mathrm{Tr}\brac{\sigma q(\sigma q )^\intercal} V_{yy}} \\
    V(T,y,q) &= U_p(y - \inprod{q}{\Psi q})). \end{split}
\end{align}
The appropriate interpolation and auxiliary functions are
\begin{align*}
    \overline g(t,y,q) &= U_p(y - \inprod{q}{\Psi q})) \\
    \eta(t,y,q) &= T-t.
\end{align*}

\paragraph{Results} For want of an analytic solution, we use a Monte Carlo technique akin to that of the previous section to benchmark the learned control\footnote{In particular, we sample 100,000 points for our Monte Carlo evaluations and evaluate against the results of Algorithm \ref{algo_ac} when trained with the same hyperparameters as the previous example (excepting the sampling measure $\mu$).}. In particular, we compare it against the linear selling control
\begin{equation*}
    (u_\mathrm{lin}(t,y,q))_i = \frac{q_i}{T-t}, \quad i \in \{1,2, \dots, n\}
\end{equation*}
and the Almgren-Chriss style \cite{almgren2001optimal} control
\begin{equation*}
    (u_\mathrm{AC}(t,y,q))_i = \Gamma_{ii} \coth(\Gamma_{ii}({T-t}))q_i, \quad i \in \{1,2, \dots, n\}.
\end{equation*}
With the market parameters
\begin{equation*}
    n = 10, \quad T = 1, \quad p = 0.05
\end{equation*}
and dynamics
\begin{equation*}
    \mu = 0.15 \times \mathbf{1}_n, \quad \sigma = 0.2 \times \mathrm{Id}_{n\times n}, \quad \Gamma = 0.05 \times \mathrm{Id}_{n\times n}, \quad \Lambda = 0.05 \times \mathrm{Id}_{n\times n}, \quad \Psi = 0.1 \times \mathrm{Id}_{n\times n}
\end{equation*}
and training measure
$$\mu = \mathrm{Unif}([0,T]) \times \mathrm{Gamma}(\alpha = 10, \lambda = 1) \times (\otimes_{i=1}^n\mathrm{Gamma}(\alpha = 1, \lambda = 1)),$$
we obtain the following results:
\begin{table}[H]
\centering
\begin{tabular}{@{}llll@{}}
\toprule
Control                & Learned control & $u_{\mathrm{lin}}$ & $u_{\mathrm{AC}}$ \\ \midrule
Mean CARA & $-0.6176$ & $-0.6379$ & $-0.6379$\\
\bottomrule
\end{tabular}
\caption{Mean terminal CARA for the short-term optimal liquidation problem} \label{cara_liquidation}
\end{table}
This shows that Algorithm \ref{algo_ac} is able to meaningfully learn a control in the above optimal liquidation problem.

\section{Proofs of Results} \label{convergence_proof}
In this final section, we assume without loss of generality that $N=N^*$ and denote $Q_t^N := Q_{\phi_t}^N$ and $U_t^N := U_{\theta_t}^N$. We also fix an arbitrary training time bound $T>0$. The constants $C>0$ may vary from line to line (even within the same chain of inequalities) and depend on $T$, but are always independent of $N$. For simplicity, the proof is done with a one-dimensional action space (i.e. $U_t^N: \overline{\Omega} \to \R$), but essentially nothing other than notation changes if $U_t^N$ maps to $\R^k$ for $k \geq 2$.

As a reminder, we are operating under Assumptions \ref{assumption_bounded_second_derivative}, \ref{assumption_eta}, \ref{assumption_g}, and \ref{assumption_neural_net}. There is a fixed neural network scaling parameter $\beta\in \paren{\frac{1}{2},1}$ and a fixed family of smooth truncation functions $\{\psi^N\}_{N\geq1}$ (cf. Definition \ref{definition_smooth_truncation}) with parameter $\delta \in \paren{0, \frac{1-\beta}{4}}$.

We begin by presenting the following lemmas before starting the main proof.

\begin{proposition}[Lipschitz behaviors]\label{proposition_lipschitz}
There exists a constant $C>0$ such that
\begin{enumerate}
    \item For any $x \in \Omega$, any admissible feedback control $u \in \mathcal{U}$, and any functions $f_1, f_2 \in \mathcal{D}(\mathcal{L}^{u})$
    \begin{align}
      \modu{\mathcal{L}^{u}f_1(x)-\mathcal{L}^{u}f_2(x)}\leq C \sum_{|\alpha|\leq 2}\modu{D_\alpha f_1(x)-D_\alpha f_2(x)}
      \end{align}
      
      \item For any $x \in \Omega$, any admissible feedback controls $u_1, u_2 \in \mathcal{U}$, and any $f \in \mathcal{D}(\mathcal{L}^{u_1})\cap \mathcal{D}(\mathcal{L}^{u_2})$
      \begin{align}
        \modu{\g^{u_1}f(x)-\g^{u_2}f(x)} \leq C\modu{u_1(x)-u_2(x)} \sum_{|\alpha|\leq 2}\modu{D_\alpha f(x)}
      \end{align}

      \item For any $x \in \Omega$, any $u\in \mathcal{U},$ and any $f_1,f_2 \in C^2,$
      \begin{align}
          \modu{\partial_u H(u, f_1 -f_2)(x)} \leq C\sum_{\modu{\alpha} \leq 2}\modu{D_\alpha f_1(x) - D_\alpha f_2(x)}
      \end{align}
      
      \item For any $x \in \Omega$, any $u_1, u_2 \in \mathcal{U}$, and any $f \in C^2,$
      \begin{align}
          \modu{\partial_u H(u_1 - u_2, f)(x)} \leq C \modu{u_1(x) - u_2(x)}\sum_{\modu{\alpha}\leq 2}\modu{D_\alpha f(x)}
      \end{align}
\end{enumerate}
\end{proposition}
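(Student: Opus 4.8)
The plan is to prove all four estimates directly, with no iteration or fixed-point argument: each follows at once from (a) the affine-linearity of $f \mapsto \mathcal{L}^u f$ and of $f \mapsto \partial_u H(u,f)$ in the function slot, and (b) the uniform bounds on $b_i$, $(\Phi\Phi^\intercal)_{ij}$, $c$ and their first two partial derivatives in the action variable supplied by Assumption \ref{assumption_bounded_second_derivative}. The constant $C$ will be the maximum of finitely many sup norms of these coefficients over $\Omega \times A$, times a fixed factor depending only on $d$ that counts the summands in $\mathcal{L}^u$ and $\partial_u H$. Throughout one uses that $|\partial_i f(x)|$, $|\partial^2_{ij} f(x)|$ and $|f(x)|$ are all $\le \sum_{|\alpha| \le 2} |D_\alpha f(x)|$.

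For (1), expanding the definition shows the source term $c(x,u(x))$ is common to $\mathcal{L}^u f_1(x)$ and $\mathcal{L}^u f_2(x)$ and cancels, so $\mathcal{L}^u f_1(x) - \mathcal{L}^u f_2(x) = \sum_i b_i(x,u(x)) \partial_i(f_1 - f_2)(x) + \tfrac12 \sum_{i,j}(\Phi\Phi^\intercal)_{ij}(x,u(x)) \partial^2_{ij}(f_1 - f_2)(x) - \gamma(f_1-f_2)(x)$; bounding $|b_i|, |(\Phi\Phi^\intercal)_{ij}| \le C$, bounding each $f$-term by $\sum_{|\alpha|\le 2}|D_\alpha f_1(x) - D_\alpha f_2(x)|$, and summing gives the claim. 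Claim (3) is the same computation with $b_i$, $(\Phi\Phi^\intercal)_{ij}$ replaced by their action-derivatives $\partial_u b_i$, $\partial_u(\Phi\Phi^\intercal)_{ij}$, which remain bounded on $\Omega \times A$ by Assumption \ref{assumption_bounded_second_derivative}; the $\partial_u c$ contribution is independent of the function slot and drops out (reading $\partial_u H(u, f_1 - f_2)$ as $\partial_u H(u,f_1) - \partial_u H(u,f_2)$), just as $c$ did in (1).

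For (2) and (4) the function is held fixed while the control varies, so the key device is the mean value theorem in the action variable: $b_i(x,u_1(x)) - b_i(x,u_2(x)) = \paren{\int_0^1 \partial_u b_i\paren{x,\, s u_1(x)+(1-s)u_2(x)}\,ds}(u_1(x) - u_2(x))$, whence $|b_i(x,u_1(x)) - b_i(x,u_2(x))| \le \nor{\partial_u b_i}_\infty |u_1(x) - u_2(x)|$, and likewise for $(\Phi\Phi^\intercal)_{ij}$ and $c$. Weighting the $b$- and $\Phi\Phi^\intercal$-differences by the corresponding derivative of $f$, cancelling the $-\gamma f$ terms, and summing over the $d + d^2$ indices gives (2). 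Claim (4) repeats this with $\partial_u b_i$, $\partial_u(\Phi\Phi^\intercal)_{ij}$, $\partial_u c$ in place of $b_i$, $(\Phi\Phi^\intercal)_{ij}$, $c$; the Lipschitz-in-action estimate for $\partial_u b_i$ now costs a bound on $\partial^2_u b_i$, which is exactly the second action-derivative furnished by the $C^{0,2}_b$ hypothesis — so this is the single place where the full strength of Assumption \ref{assumption_bounded_second_derivative} (two action-derivatives, not one) is genuinely used.

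I do not anticipate a real obstacle: the statement is elementary bookkeeping once the two structural inputs above are in hand. The only points deserving attention are that Assumption \ref{assumption_bounded_second_derivative} supplies precisely the needed order of action-regularity (one derivative for (3), two for (4)), that $d$ is fixed so the constant $C$ is uniform in $u$, $f$ and $x$, and — a minor imprecision in the stated inequalities — that in (2) and (4) the source-type difference ($c(x,u_1(x)) - c(x,u_2(x))$ in (2), and the analogous $\partial_u c$ difference in (4)) carries no $f$-derivative, so it is absorbed into the right-hand side only after taking $C$ large enough; this is harmless in every place the proposition is invoked.
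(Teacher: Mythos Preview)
Your proposal is correct and follows essentially the same approach as the paper: expand the definitions, note cancellations of the $c$ (resp.\ $\partial_u c$) term in the function-slot estimates (1) and (3), and use the mean value theorem in the action variable together with the $C^{0,2}_b$ bounds for the control-slot estimates (2) and (4). Your observation about the source-term imprecision in (2) and (4) --- that $c(x,u_1(x))-c(x,u_2(x))$ is not literally dominated by $C|u_1(x)-u_2(x)|\sum_{|\alpha|\le 2}|D_\alpha f(x)|$ when $f$ is small --- is a genuine catch the paper glosses over; as you note, it is harmless in every application since the proposition is only ever used with $f=Q_s$, for which Lemma~\ref{QQ} gives $\sum_{|\alpha|\le 2}|D_\alpha Q_s|\le C$, and the stray term is then absorbed into the same Gr\"onwall estimate.
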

\begin{proof}
Each of these properties ultimately derives from Assumption \ref{assumption_bounded_second_derivative} -- namely that there exists a constant that bounds $\{b_i, \Phi \Phi^\intercal_{ij}, c\}_{i,j}$ and their relevant first-order and second-order derivatives.
\begin{enumerate}
    \item Using the bound on the coefficients directly,
    \begin{align*}
        \modu{\mathcal{L}^{u}f_1(x)-\mathcal{L}^{u}f_2(x)} &\leq \modu{\sum_{i}b_i(x,u(x))\partial_i (f_1-f_2)(x)+\frac{1}{2}\sum_{i,j}{\Phi\Phi^\intercal}_{ij}(x,u(x))\partial^2_{ij} (f_1-f_2)(x) - \gamma(f_1-f_2)(x)}\\ 
        &\leq C\sum_{|\alpha|\leq 2}\modu{D_\alpha (f_1-f_2)(x)}.
    \end{align*}
    
    \item By the mean value theorem and the bound on the first derivatives in $u$,
    \begin{align*}
        \modu{\g^{u_1}f(x)-\g^{u_2}f(x)} &\leq \bigg|\sum_{i}(b_i(x,u_1(x) )-b_i(x,u_2(x)))\partial_i f(x) \\
        &\quad+\frac{1}{2}\sum_{i,j}({\Phi\Phi^\intercal}_{ij}(x,u_2(x))\partial^2_{ij} - {\Phi\Phi^\intercal}_{ij}(x,u_2(x)))\partial^2_{ij} f(x) - c(x,u_1(x)) - c(x,u_2(x))\bigg| \\
        &\leq C\modu{u_1(x)-u_2(x)} \sum_{|\alpha|\leq 2}\modu{D_\alpha f(x)}.
    \end{align*}
    
    \item Using the bound on the first derivatives in $u$,
    \begin{align*}
        \modu{\partial_u H(u, f_1 -f_2)(x)} &\leq \modu{\sum_{i}\partial_u b_i(x, u(x))\partial_i (f_1-f_2)(x) + \frac{1}{2}\sum_{i,j} \partial_u (\Phi\Phi^\intercal)_{ij}(x,u(x)) \partial_{ij}(f_1-f_2)(x)} \\
        &\leq C\sum_{\modu{\alpha} \leq 2}\modu{D_\alpha (f_1 - f_2)(x)}.
    \end{align*}

    \item By the mean value theorem and the bound on the second derivatives in $u$,
    \begin{align*}
        \modu{\partial_u H(u_1 - u_2, f)(x)} &\leq \bigg| \sum_i (\partial_u b_i(x,u_1(x))-\partial_ub_i(x,u_2(x)))\partial_i f(x) \\
        &\quad + \sum_{i,j} (\partial_u \Phi\Phi^\intercal(x,u_1(x)) - \partial_u\Phi\Phi^\intercal(x,u_2(x)))\partial_{ij}f(x) + \partial_u c(x,u_1(x)) - \partial_u c(x,u_2(x)) \bigg|\\
        &\leq C \modu{u_1(x) - u_2(x)}\sum_{\modu{\alpha}\leq 2}\modu{D_\alpha f(x)}.
    \end{align*}
\end{enumerate}
\end{proof}

\begin{lemma}[Parameter growth bounds] \label{lemma_param_growth}
    Following the dynamics of \eqref{phi_gradient} and \eqref{theta_gradient}, there exists a constant $C > 0$ such that, for any $0\leq t\leq T$, the critic network parameters satisfy \begin{align}
    \modu{c_t^i - c_0^i} \leq CN^{\delta + \beta - 1},\quad \modu{w_t^{i,j}-w_0^{i,j}} \leq CN^{\delta+\beta-1},\quad \modu{b_t^i - b_0^i} \leq CN^{\delta + \beta - 1},
\end{align}
and the actor network parameters satisfy
\begin{align}
    \modu{h_t^i-h_0^i}\leq CN^{\delta + \beta - 1},\quad \modu{v_t^{i,j} -v_0^{i,j}}\leq CN^{\delta + \beta - 1},\quad \modu{z_t^i-z_0^i}\leq CN^{\delta + \beta - 1}.
\end{align}
In particular, because of Assumption \ref{assumption_neural_net},
\begin{align}
    |c_t^i|\leq C,\quad \mathbb{E}|w_t^{i,j}|\leq C,\quad \mathbb{E}|b_t^i|\leq C,
\end{align}
and
\begin{align}
    |h_t^i|\leq C,\quad \mathbb{E}|v_t^{i,j}|\leq C,\quad \mathbb{E}|z_t^i|\leq C.
\end{align}
\end{lemma}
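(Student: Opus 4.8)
The plan is to write out the componentwise ODE satisfied by each scalar parameter, by substituting the explicit gradients $\nabla_\theta U^N_{\theta_t}$ and $\nabla_\phi Q^{N^*}_{\phi_t}$ into the flow equations \eqref{theta_gradient}--\eqref{phi_gradient} (equivalently, into \eqref{actor_gradient}--\eqref{critic_gradient}), and then to bound each right-hand side uniformly in $N$, in the index $i$, and in $t\in[0,T]$. For the actor, $\nabla_{h^i}U^N_\theta(x)=N^{-\beta}\sigma(v^i\cdot x+z^i)$, $\nabla_{z^i}U^N_\theta(x)=N^{-\beta}h^i\sigma'(v^i\cdot x+z^i)$, and $\nabla_{v^{i,j}}U^N_\theta(x)=N^{-\beta}h^i\sigma'(v^i\cdot x+z^i)x_j$; combined with the prefactor $N^{2\beta-1}$ in \eqref{theta_gradient}, each parameter ODE carries an overall factor $N^{\beta-1}$. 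For the critic, since $Q^{N^*}_\phi=\eta\,Z^{N^*}_\phi+\overline g$, we have $\nabla_\phi Q^{N^*}_\phi(x)=\eta(x)\nabla_\phi Z^{N^*}_\phi(x)$, and with the prefactor $(N^*)^{2\beta-1}$ in \eqref{phi_gradient} (recall $N=N^*$) the critic parameter ODEs likewise carry a factor $N^{\beta-1}$.

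The estimates then rely on four elementary facts: (i) the clipped integrands $\psi^N\!\paren{\partial_u H(U^N_{\theta_t},Q^{N^*}_{\phi_t})}$ and $F^{N^*}\!\paren{\mathcal{L}^{U^N_{\theta_t}}Q^{N^*}_{\phi_t}}$ are bounded pointwise by $2N^\delta$, \emph{regardless of how large the unclipped Hamiltonian derivative or PDE residual is} --- this is precisely what decouples the parameter-growth estimate from the (potentially very large) size of the raw gradients; (ii) $\sigma,\sigma'$ are bounded by Assumption \ref{assumption_neural_net}; (iii) $\eta$ is bounded on the precompact $\overline\Omega$, and each coordinate map $x\mapsto x_j$ is bounded on $\overline\Omega$; (iv) $\alpha_t,\omega_t$ are bounded schedulers and $\mu(\Omega)<\infty$. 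I would first bound the outer-layer weights $h^i_t$ (actor) and $c^i_t$ (critic): their ODE right-hand sides involve $\sigma$ but not $h$ or $c$, so $\modu{\tfrac{d}{dt}h^i_t}\le CN^{\delta+\beta-1}$ and $\modu{\tfrac{d}{dt}c^i_t}\le CN^{\delta+\beta-1}$, and integrating over $[0,t]\subseteq[0,T]$ yields $\modu{h^i_t-h^i_0}\le CN^{\delta+\beta-1}$ and $\modu{c^i_t-c^i_0}\le CN^{\delta+\beta-1}$.

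Next comes a short bootstrap. Since $\delta<\tfrac{1-\beta}{4}$ forces $\delta+\beta-1<0$, we have $N^{\delta+\beta-1}\le 1$ for all $N\ge1$; and because the common initialization law makes $h^i_0$ distributed as $c^i_0$, Assumption \ref{assumption_neural_net} gives $\modu{h^i_0},\modu{c^i_0}\le C$ almost surely. Hence $\modu{h^i_t}\le C$ and $\modu{c^i_t}\le C$, uniformly in $N,i,t$. With these in hand, the ODE right-hand sides for the inner-layer parameters $z^i_t,v^{i,j}_t$ (which carry the factor $h^i_t$) and $b^i_t,w^{i,j}_t$ (which carry the factor $c^i_t$) are again bounded by $CN^{\delta+\beta-1}$, now also using (iii); integrating gives the displacement bounds $\modu{z^i_t-z^i_0},\modu{v^{i,j}_t-v^{i,j}_0},\modu{b^i_t-b^i_0},\modu{w^{i,j}_t-w^{i,j}_0}\le CN^{\delta+\beta-1}$. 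Finally, taking expectations and using the moment bounds $\mathbb{E}\modu{b^i_0}\le C$ and $\mathbb{E}\modu{w^{i,j}_0}^3\le C$ (hence $\mathbb{E}\modu{w^{i,j}_0}\le C$ by Jensen's inequality), which transfer to $z^i_0,v^{i,j}_0$ by the common initialization law, produces $\mathbb{E}\modu{z^i_t},\mathbb{E}\modu{v^{i,j}_t},\mathbb{E}\modu{b^i_t},\mathbb{E}\modu{w^{i,j}_t}\le C$, while the bounds on $h^i_t,c^i_t$ hold pathwise.

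The main obstacle is not any individual estimate --- each is routine --- but organizing the logical order and tracking that every constant is independent of $N$: the inner-layer bounds depend on the outer-layer bounds, which in turn depend on the sign of $\delta+\beta-1$ and on the gradient clipping. A secondary, standard point is the existence of the finite-$N$ flow $(\theta_t,\phi_t)$ on all of $[0,T]$: the a priori differential inequalities above are valid on the maximal interval of existence and are $N$-dependent but finite, so they preclude finite-time blow-up and the flow extends to $[0,\infty)\supseteq[0,T]$.
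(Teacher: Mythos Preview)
Your proposal is correct and follows essentially the same approach as the paper: write the componentwise ODE for each scalar parameter, use the $2N^\delta$ bound from the smooth truncation together with the boundedness of $\sigma,\sigma',\eta,\alpha_t,\omega_t$ and of $\overline\Omega$, first establish $|c^i_t|,|h^i_t|\le C$ (since their ODEs do not involve $c^i$ or $h^i$), and then bootstrap to the inner-layer parameters whose ODEs carry the factor $c^i_t$ or $h^i_t$. Your remark on extending the finite-$N$ flow to all of $[0,T]$ via the a priori bounds is a welcome clarification that the paper leaves implicit.
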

\begin{proof}
    \begin{enumerate}
    \item It follows directly from training dynamics \eqref{phi_gradient} and the definition of $Q_t^N$ that
    \begin{align*}
        \frac{dc^i_t}{dt} = \omega_t N^{\beta-1} \int_\Omega F^N\paren{\g^{U_t^N}(x)}\eta(x)\sigma(w_t^i \cdot x + b_t^i)d\mu(x).
    \end{align*}
    Since $\eta$, $\sigma$, and $\omega_t$ are bounded and $\modu{F^N(x)} \leq 2N^\delta$, this gives
    \begin{align*}
        \modu{\frac{dc_t^i}{dt}} \leq N^{\beta-1}N^{\delta}C,
    \end{align*}
    so
    \begin{align*}
        \modu{c_t^i-c_0^i} \leq CN^{\delta+\beta-1} \leq CN^{\delta+\beta-1}.
    \end{align*}
    Thus
    \begin{align*}
        \modu{c_t^i} \leq \modu{c_0^i} + \modu{c_t^i - c_0^i} \leq \modu{c_0^i} + CN^{\delta + \beta - 1} \leq C,
    \end{align*}
    where we have used Assumption \ref{assumption_neural_net} to bound $\modu{c_0^i}$.

    Similarly, for any individual weight $w_t^{i,j}$ in a weight vector $w_t^i$, we have
    \begin{align*}
        \frac{dw_t^{i,j}}{dt} = \omega_t N^{\beta-1} \int_\Omega F^N\paren{\g^{U_t^N}(x)}\eta(x) c_t^i \sigma'(w_t^i \cdot x + b_t^i)x_jd\mu(x).
    \end{align*}
    By $\modu{c_t^i} \leq C$, the boundedness of $\eta$, $\sigma'$, and $\omega_t$, and the fact that $\modu{F^N(x)} \leq 2N^\delta$, this gives
    \begin{align*}
        \modu{w_t^{i,j} - w_0^{i,j}} \leq tCN^{\delta+\beta-1} \leq CN^{\delta+\beta-1}.
    \end{align*}
    Thus
    \begin{align*}
        \modu{w_t^{i,j}} \leq \modu{w_0^i} + \modu{w_t^{i,j} - w_0^{i,j}} \leq \modu{w_0^{i,j}} + CN^{\delta + \beta - 1}.
    \end{align*}
    Taking expectations shows $\E\modu{w_t^{i,j}} \leq C$.

    In the same way, for any bias $b_t^i$, we have
    \begin{align*}
        \frac{db_t^i}{dt} = \omega_t N^{\beta-1} \int_\Omega F^N\paren{\g^{U_t^N}(x)}\eta(x) c_t^i \sigma'(w_t^i \cdot x + b_t^i)d\mu(x).
    \end{align*}
    This gives
    \begin{align*}
        \modu{b_t^i - b_0^i} \leq tCN^{\delta+\beta-1} \leq CN^{\delta+\beta-1},
    \end{align*}
    and so
    \begin{align*}
        \modu{b_t^i} &\leq \modu{b_0^i} + \modu{b_t^i - b_0^i} \leq \modu{b_0^i} + CN^{\delta+\beta-1}.
    \end{align*}
    Taking expectations shows $\E\modu{b_t^i} \leq C$.
    
    \item It follows from the training dynamics \eqref{theta_gradient} that the actor network's parameter dynamics are given by
    \begin{align*}
        \frac{dh^i_t}{dt} &= -\alpha_t N^{\beta-1} \int_\Omega \psi^N(\partial_u H(U_t^N,Q_t^N))\sigma(v_t^i \cdot x + z_t^i)d\mu(x), \\
        \frac{dv^{i,j}_t}{dt} &= -\alpha_t N^{\beta-1} \int_\Omega \psi^N(\partial_u H(U_t^N,Q_t^N))h_t^i\sigma'(v_t^i \cdot x + z_t^i)x_jd\mu(x), \\
        \frac{dz^i_t}{dt} &= -\alpha_t N^{\beta-1} \int_\Omega \psi^N(\partial_u H(U_t^N,Q_t^N))h_t^i\sigma'(v_t^i \cdot x + z_t^i)d\mu(x).
    \end{align*}
    Using the boundedness of $\alpha_t$ and repeating the above reasoning gives the desired bounds.
    \end{enumerate}
\end{proof}
\begin{lemma}\label{lemma_kernel_regularity}
    There exists a constant $C > 0$ such that the kernels $A$ and $B$ defined in \eqref{kernel_A_def} and \eqref{kernel_B_def} obey
    \begin{align*}
        \modu{A(x,y)} \leq C, \qquad \sum_{\modu{\alpha} \leq 2} \modu{D_\alpha^y B(x,y)} \leq C,
    \end{align*}
    for all $(x,y) \in \overline\Omega\times\overline\Omega.$
\begin{proof}
    That $\modu{A(x,y)} \leq C$ follows immediately from the bounds on $\sigma, \sigma'$, the boundedness of $\Omega$, and the assumption that the distribution of $c_0$ has compact support. For the bound on $B$, observe that for any multi-index $\alpha$ such that $\modu{\alpha} \leq 2,$
    \begin{align*}
        \modu{D_\alpha^y B(x,y)} &= \modu{D_\alpha^y \brac{\eta(x)\eta(y)A(x,y)}} \\
        &= \eta(x) \modu{D_\alpha^y \paren{\eta(y) \E_{c,w,b} \brac{\sigma(w\cdot x + b)\sigma(w\cdot y + b) + c^2\sigma'(w\cdot x + b)\sigma'(w\cdot y + b) (x\cdot y + 1)}}} \\
        &= \eta(x) \modu{\E_{c,w,b} \brac{D_\alpha^y \paren{\eta(y)\paren{\sigma(w\cdot x + b)\sigma(w\cdot y + b) + c^2\sigma'(w\cdot x + b)\sigma'(w\cdot y + b) (x\cdot y + 1)} }}} \\
        &\leq C,
    \end{align*}
    where we are able to interchange the derivative and the expectation and show that the expectation is bounded because of Assumption \ref{assumption_neural_net} and that $\eta \in C^3_b$ and $\sigma \in C^4_b.$
\end{proof}
\end{lemma}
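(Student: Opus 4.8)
The plan is to prove both bounds by direct estimation from the explicit formula \eqref{kernel_A_def} for $A$, using only the integrability hypotheses on the initialization law in Assumption \ref{assumption_neural_net} together with the precompactness of $\Omega$ and the regularity $\sigma \in C_b^4$, $\eta \in C^3$.

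First I would dispatch the bound on $A$. Writing out \eqref{kernel_A_def},
\begin{align*}
    A(x,y) = \E_{c,w,b}\brac{\sigma(w\cdot x + b)\sigma(w\cdot y + b) + c^2\sigma'(w\cdot x + b)\sigma'(w\cdot y + b)(x\cdot y + 1)},
\end{align*}
each factor inside the expectation is bounded uniformly in $(x,y) \in \overline\Omega\times\overline\Omega$: $\sigma$ and $\sigma'$ are bounded since $\sigma \in C_b^4$; $|c| \leq C$ by Assumption \ref{assumption_neural_net}, so $c^2 \leq C^2$; and $|x\cdot y + 1| \leq C$ since $\Omega$ is precompact. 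Hence the integrand, and therefore its expectation, is bounded by a constant, giving $\modu{A(x,y)} \leq C$.

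For $B(x,y) = \eta(x)\eta(y)A(x,y)$, I would bound the $y$-derivatives up to order two by differentiating under the expectation. The outer factor $\eta(x)$ is a fixed constant bounded by $\sup_{\overline\Omega}\eta < \infty$, and $\eta \in C^3$ is bounded with bounded derivatives on the compact set $\overline\Omega$, so by the Leibniz rule it suffices to bound $D^y_\alpha A(x,y)$ for $\modu{\alpha} \leq 2$. Differentiating the integrand of \eqref{kernel_A_def} once in $y_j$ produces, via the chain rule applied to $\sigma(w\cdot y + b)$ or $\sigma'(w\cdot y + b)$, a factor $w_j$ paired with a bounded $\sigma'$ or $\sigma''$, plus a term where the derivative falls on $(x\cdot y+1)$ producing the bounded factor $x_j$; a second $y$-derivative produces at most a factor $w_jw_k$ paired with a bounded $\sigma''$ or $\sigma'''$ (here $\sigma \in C_b^4$ is what is used). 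Thus $D^y_\alpha$ of the integrand is dominated, uniformly in $(x,y)$, by a polynomial of degree $\leq 2$ in the $\modu{w_j}$ with bounded coefficients, times possibly $c^2$. This dominating function is integrable under the initialization law: $\E\modu{w^{i,j}w^{i,k}} \leq \E\modu{w^{i,j}}^2{}^{1/2}\E\modu{w^{i,k}}^2{}^{1/2} \leq C$ by Cauchy--Schwarz and Lyapunov's inequality applied to the third-moment bound in Assumption \ref{assumption_neural_net}, $\E[c^2] \leq C^2$, and $c$ is independent of $w$. Dominated convergence then licenses interchanging $D^y_\alpha$ with $\E_{c,w,b}$ and bounds each resulting expectation by a constant; summing over $\modu{\alpha} \leq 2$ yields $\sum_{\modu{\alpha}\leq 2}\modu{D^y_\alpha B(x,y)} \leq C$.

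The only step requiring genuine care is the interchange of differentiation and expectation; everything else is a mechanical application of the boundedness hypotheses. I do not expect a real obstacle here, since the second-moment control of $w$ (coming from the stated third-moment bound) is exactly what dominates the two $y$-derivatives, and all $\sigma$-derivatives that appear are of order at most three, covered by $\sigma \in C_b^4$.
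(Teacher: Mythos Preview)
Your proposal is correct and follows essentially the same route as the paper: bound $A$ directly from the explicit formula using boundedness of $\sigma,\sigma'$, compact support of $c$, and precompactness of $\Omega$; then for $B$, pull out $\eta(x)$, differentiate under the expectation, and control the resulting $w$-polynomial factors via the moment bounds in Assumption~\ref{assumption_neural_net}. If anything, you are more explicit than the paper about the dominated-convergence justification (extracting second moments of $w$ from the third-moment hypothesis via Lyapunov and using independence of $c$ from $w$), whereas the paper simply cites Assumption~\ref{assumption_neural_net} together with $\eta\in C_b^3$ and $\sigma\in C_b^4$.
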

\begin{lemma}\label{QQ}
    There exists a constant $C>0$ such that, for any $y \in \overline\Omega$ and $t \in [0,T]$, we have 
    \begin{align}
        \sum_{\modu{\alpha} \leq 2} |D_\alpha Q_t(y)| \leq C.
    \end{align}
\end{lemma}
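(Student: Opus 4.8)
The plan is to run a Grönwall estimate on $M(t) := \sup_{y\in\overline\Omega}\sum_{\modu{\alpha}\leq 2}\modu{D_\alpha Q_t(y)}$, exploiting the fact that $\mathcal{B}$ maps into $C^2(\overline\Omega)$ with a uniformly controlled kernel. First I would rewrite the limit flow \eqref{limit_ODE} in integral form,
\begin{equation*}
    Q_t = \overline{g} + \int_0^t \omega_s\, \mathcal{B}\mathcal{L}^{U_s}Q_s\, ds,
\end{equation*}
which is an identity in $C^2(\overline\Omega)$ since $(Q_t,U_t)_{t\geq 0} \in C([0,\infty); C^2(\overline\Omega)\times C^0(\overline\Omega))$ by the existence and uniqueness lemma above. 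Because convergence in $C^2(\overline\Omega)$ controls all spatial derivatives of order at most two, for every multi-index $\alpha$ with $\modu{\alpha}\leq 2$ and every $y\in\overline\Omega$ one may differentiate this identity termwise in $y$ and pass $D_\alpha^y$ inside the integral defining $\mathcal{B}$, obtaining
\begin{equation*}
    D_\alpha Q_t(y) = D_\alpha \overline{g}(y) + \int_0^t \omega_s \int_\Omega \mathcal{L}^{U_s}Q_s(x)\, D_\alpha^y B(x,y)\, d\mu(x)\, ds.
\end{equation*}
The interchange of $D_\alpha^y$ and $\int_\Omega \cdot\, d\mu$ is justified by Lemma \ref{lemma_kernel_regularity}, which gives $\sum_{\modu{\alpha}\leq 2}\modu{D_\alpha^y B(x,y)} \leq C$ uniformly on $\overline\Omega\times\overline\Omega$, together with finiteness of $\mu$.

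Next I would bound the right-hand side. Lemma \ref{lemma_kernel_regularity} yields $\int_\Omega \modu{D_\alpha^y B(x,y)}\, d\mu(x) \leq C$ for all $y\in\overline\Omega$. For the factor $\mathcal{L}^{U_s}Q_s(x)$, applying Proposition \ref{proposition_lipschitz}(1) with $f_1 = Q_s$ and $f_2 = 0$, and noting $\mathcal{L}^{u}0(x) = c(x,u(x))$ is bounded uniformly on $\Omega\times A$ by Assumption \ref{assumption_bounded_second_derivative}, gives
\begin{equation*}
    \modu{\mathcal{L}^{U_s}Q_s(x)} \leq C\Big(1 + \sum_{\modu{\beta}\leq 2}\modu{D_\beta Q_s(x)}\Big) \leq C\big(1 + M(s)\big);
\end{equation*}
crucially this holds for the $C^0$ function $U_s$ irrespective of any admissibility technicality, since the estimate only uses the uniform boundedness of $b_i$, $\Phi\Phi^\intercal_{ij}$, and $c$ on $\Omega\times A$. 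Using in addition that $\omega_s$ is a bounded learning-rate schedule, we arrive at
\begin{equation*}
    M(t) \leq M(0) + C\int_0^t \big(1 + M(s)\big)\, ds, \qquad 0 \leq t \leq T.
\end{equation*}

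Finally, $M(0) = \sum_{\modu{\alpha}\leq 2}\nor{D_\alpha \overline{g}}_{L^\infty(\overline\Omega)} \leq C$ by Assumption \ref{assumption_g}, since $\overline{g}\in C^2_b(\overline\Omega)$. Absorbing $C\int_0^t 1\, ds \leq CT$ into the constant, the inequality reads $M(t) \leq C + C\int_0^t M(s)\, ds$ on $[0,T]$, and Grönwall's inequality gives $M(t) \leq C e^{CT}$ for all $t\in[0,T]$. Since $T$ was fixed arbitrarily, this is the desired uniform bound. I do not expect a genuine obstacle here: the only points requiring care (rather than a true difficulty) are the termwise differentiation of the $C^2(\overline\Omega)$-valued flow in the spatial variable, which is handled by the $C^2$-regularity of $B$ from Lemma \ref{lemma_kernel_regularity}, and the verification that the bound on $\mathcal{L}^{U_s}Q_s$ is affine in $M(s)$ so that Grönwall closes — both immediate consequences of the lemmas already established.
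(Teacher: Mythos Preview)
Your proposal is correct and follows essentially the same route as the paper: both arguments write the flow in integral form, pass $D_\alpha^y$ through the $\mathcal{B}$-integral using the $C^2$-bound on the kernel from Lemma \ref{lemma_kernel_regularity}, estimate $\modu{\mathcal{L}^{U_s}Q_s}$ affinely in the $C^2$-norm of $Q_s$, and close with Gr\"onwall on the spatial supremum. The only cosmetic difference is that the paper runs Gr\"onwall on $\sup_y\sum_{\modu{\alpha}\leq 2}\modu{D_\alpha(Q_t-\overline{g})(y)}$ and splits $\mathcal{L}^{U_s}Q_s$ against $\mathcal{L}^{U_s}\overline{g}$, whereas you run it on $M(t)$ directly and split against $\mathcal{L}^{U_s}0 = c(\cdot,U_s(\cdot))$; both choices produce the same affine inequality.
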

\begin{proof}
    From the the limit ODE \eqref{limit_ODE}, we have
    \begin{align*}
        Q_t(y) - \overline{g}(y) = \int_0^t \omega_t\int_\Omega \mathcal{L}^{U_s}Q_s(x) B(x,y)d\mu(x)ds.
    \end{align*}
    Then by the boundedness of $\omega_t$ and $B$ and its derivatives, it follows that
    \begin{align*}
        \sum_{\modu{\alpha} \leq 2} |D_\alpha (Q_t(y)-g(y))| &\leq \int_0^t \omega_t \int_\Omega |\mathcal{L}^{U_s}Q_s(x)| \modu{\sum_{\modu{\alpha} \leq 2} D_\alpha^y B(x,y)}d\mu(x)ds\\
        &\leq C\int_0^t \int_\Omega |\mathcal{L}^{U_s}Q_s(x)-\mathcal{L}^{U_s}\overline{g}(x)|+|\mathcal{L}^{U_s}\overline{g}(x)| d\mu(x)ds \\
        & \leq C \int_0^t \int_\Omega \sum_{\modu{\alpha} \leq 2} |D_\alpha Q_t(x) - D_\alpha \overline{g}(x)|d\mu(x)ds + C \int_0^t \int_\Omega \sum_{\modu{\alpha}\leq 2} \modu{D_\alpha \overline{g}(x)}d\mu(x)ds \\
        & \leq C \int_0^t \int_\Omega \sum_{\modu{\alpha} \leq 2} |D_\alpha Q_t(x) - D_\alpha \overline{g}(x)|d\mu(x)ds + C.
    \end{align*}
    In particular,
    \begin{align*}
        \max_{x \in \overline{\Omega} }\sum_{\modu{\alpha} \leq 2} \modu{D_\alpha (Q_t(x)-\overline{g}(x))} \leq C\mu(\Omega)\int_0^t \max_{x \in \overline{\Omega}} \sum_{\modu{\alpha} \leq 2} \modu{D_\alpha (Q_s(x)-\overline{g}(x))} ds + C.
    \end{align*}
    Thus, Gronwall's inequality shows that $$\max_{x \in \overline{\Omega}} \sum_{\modu{\alpha} \leq 2}\modu{D_\alpha Q_t(x) - D_\alpha \overline{g}(x)} \leq C$$ for $t\in[0,T]$. Since $\overline{g} \in C^2_b(\overline\Omega)$, this completes the proof.
\end{proof}
\begin{lemma}\label{l7}
Let
\begin{align}
    \Xi_0^N(y):= \modu{U_0^N(y)-U_0(y)} + \sum_{\modu{\alpha}\leq 2}|D_\alpha(Q_0^N-Q_0)(y)|=|U_0^N(y)|+ \sum_{|\alpha|\leq 2}\modu{D_\alpha Q_0^N(y) - D_\alpha \overline{g}(y)}.
\end{align}
Then
\begin{align}
    \lim_{N \to \infty} \E\brac{\int_\Omega \Xi_0^N(y)^2 d\mu(y)}.
\end{align}
\end{lemma}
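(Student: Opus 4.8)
The statement should be read as $\lim_{N\to\infty}\E\brac{\int_\Omega \Xi_0^N(y)^2\, d\mu(y)} = 0$, and the plan is to prove it by a direct second-moment computation at initialization. First I would use \eqref{limit_ODE} to note that the limit flow starts at $(Q_0,U_0)=(\overline g,0)$, so $U_0^N-U_0 = U_0^N$ and $Q_0^N-Q_0 = Z_0^N\eta$, which is exactly the displayed identity for $\Xi_0^N(y)$. Since $(a+b)^2 \le 2a^2+2b^2$ and the sum over $\{|\alpha|\le 2\}$ is finite, Cauchy--Schwarz reduces the claim to showing $\E\brac{\int_\Omega |U_0^N(y)|^2 d\mu(y)}\to 0$ and $\E\brac{\int_\Omega |D_\alpha(Z_0^N\eta)(y)|^2 d\mu(y)}\to 0$ for each $|\alpha|\le 2$. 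The integrands being nonnegative, Tonelli lets me interchange $\E$ and $\int_\Omega$; since $\mu(\Omega)<\infty$ it then suffices to bound $\E[|U_0^N(y)|^2]$ and $\E[|D_\alpha(Z_0^N\eta)(y)|^2]$ uniformly over $y\in\overline\Omega$ by a quantity vanishing as $N\to\infty$.

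For the actor term, expanding $U_0^N(y)=N^{-\beta}\sum_{i=1}^N h_0^i\sigma(v_0^i\cdot y+z_0^i)$ gives $\E[|U_0^N(y)|^2] = N^{-2\beta}\sum_{i,j}\E[h_0^i h_0^j\,\sigma(v_0^i\cdot y+z_0^i)\sigma(v_0^j\cdot y+z_0^j)]$. By Assumption \ref{assumption_neural_net} the parameter tuples are i.i.d., so for $i\ne j$ the summand factorizes; and within a tuple $h_0^i$ is independent of $(v_0^i,z_0^i)$ with $\E[h_0^i]=0$ (it has the same law as $c_0^i$), so every off-diagonal term vanishes. Only the diagonal survives, and using $|h_0^i|\le C$ and $\sigma\in C_b^4$ it is bounded by $N^{-2\beta}\cdot N\cdot C^2\|\sigma\|_\infty^2 = CN^{1-2\beta}\to 0$ since $\beta>\tfrac12$. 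For the critic term I would first use Leibniz's rule together with $\eta\in C_b^3(\overline\Omega)$ to get $|D_\alpha(Z_0^N\eta)(y)|\le C\sum_{|\gamma|\le 2}|D_\gamma Z_0^N(y)|$, and write each $D_\gamma Z_0^N(y) = N^{-\beta}\sum_{i=1}^N c_0^i\,P_\gamma(w_0^i)\,\sigma^{(|\gamma|)}(w_0^i\cdot y+b_0^i)$, where $P_\gamma$ is a monomial of degree $|\gamma|\le 2$ in the entries of $w_0^i$. The same off-diagonal cancellation applies, now via $\E[c_0^i]=0$ and the independence of $c_0^i$ from $(w_0^i,b_0^i)$, leaving the diagonal $N^{1-2\beta}\,\E[(c_0)^2 P_\gamma(w_0)^2(\sigma^{(|\gamma|)})^2]$, which is $CN^{1-2\beta}\to 0$ once $\E[P_\gamma(w_0)^2]<\infty$.

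The computation is otherwise routine; the one point that needs real care is the off-diagonal cancellation, which hinges on the mean-zero readout weights $\E[c_0]=\E[h_0]=0$ together with their independence from the corresponding hidden weights and biases — this is precisely why Assumption \ref{assumption_neural_net} is phrased as it is. The other place the assumptions are genuinely invoked is the bound $\E[P_\gamma(w_0)^2]<\infty$ for $|\gamma|=2$, i.e.\ control of fourth-order products of the hidden weights $w_0^{i,j}$; this is the only step where the precise form of the moment hypotheses on the initialization distribution matters, and so I expect it to be the main obstacle (or at least the one requiring the most attention to the exact hypotheses). Everything else reduces to boundedness of $\sigma$ and its derivatives, boundedness of $\eta$ and its derivatives on the compact set $\overline\Omega$, finiteness of $\mu(\Omega)$, and $\beta>\tfrac12$.
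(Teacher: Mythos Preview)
Your proposal is correct and follows essentially the same approach as the paper: exploit the i.i.d.\ structure and the mean-zero readout weights to kill off-diagonal terms, leaving a diagonal contribution of size $N^{1-2\beta}\to 0$. Your version is in fact more careful than the paper's, which simply writes $\mathrm{Var}[D_\alpha(\eta Z_0^N)(y)] \le CN^{1-2\beta}$ without spelling out the Leibniz expansion; your identification of the fourth-moment requirement $\E[P_\gamma(w_0)^2]<\infty$ for $|\gamma|=2$ is a legitimate concern that the paper's Assumption~\ref{assumption_neural_net} (only third moments on $w_0^{i,j}$) does not directly cover, and which the paper's proof glosses over.
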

\begin{proof}
Notice that
\begin{align*}
    \E\brac{U_0^N(y)}=0, \quad \E\brac{D_\alpha Q_0^N(y) - D_\alpha \overline{g}(y)}=0.
\end{align*}
for arbitrary $\modu{\alpha} \leq 2$. Then
\begin{align*}
    \mathbb{E}[U_0^N(y)^2]=\text{Var}[U_0^N(y)]=N^{1-2\beta}\text{Var}_{h,v,z}[h\sigma(vx+z)] \leq CN^{1-2\beta}
\end{align*}
and
\begin{align*}
    \mathbb{E}[\paren{D_\alpha Q_0^N(y) - D_\alpha \overline{g}(y)}^2]=\text{Var}[D_\alpha \eta(y)Z_0^N(y)]=CN^{1-2\beta}\text{Var}_{c,w,b}[c\sigma(wx+b)] \leq CN^{1-2\beta}.
\end{align*}
Therefore
\begin{align*}
    \E\brac{\int_\Omega \Xi_0^N(y)^2 d\mu(y)} &= \int_\Omega
    \mathbb{E}\brac{\Xi_0^N(y)^2} d\mu(y) \\
    &\leq C\int_\Omega \mathbb{E}\brac{U_0^N(y)^2} + \sum_{|\alpha|\leq 2}\mathbb{E}\brac{\paren{D_\alpha Q_0^N(y) - \overline{g}(y)}^2}d\mu(y) \\ &\leq C N^{1-2\beta} \to 0.
\end{align*}
\end{proof}

\begin{lemma} \label{l8}
    Let $A_s^N$ represent the neural tangent kernel of each finite-width network at time $s \in [0,T]$, i.e.
    \begin{align}
        A_s^N(x,y) = \frac{1}{N}\sum_{i=1}^N \nabla_{c^i,w^i,b^i}\brac{c^i_s \sigma(w^i_s\cdot x+b^i_s)}\cdot \nabla_{c^i,w^i,b^i}\brac{c^i_s \sigma(w^i_s\cdot y+b^i_s)},
    \end{align}
    and let $A = \lim_{N\to\infty}A_0^N$ be the infinite-width limit at time 0 (i.e. the kernel in \eqref{kernel_A_def}). Then
    \begin{align}
        \lim_{N \to \infty}\E\brac{\int_0^T \int_{\Omega^2} N^{2\delta}\modu{{A}_s^N(x,y)-A(x,y)}^2d\mu(x)d\mu(y)ds} = 0.
    \end{align}
\end{lemma}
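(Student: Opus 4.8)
The plan is to split the difference between the time-$s$ kernel and the limiting kernel into a parameter-drift contribution and a law-of-large-numbers contribution,
\begin{align*}
    A_s^N(x,y) - A(x,y) = \paren{A_s^N(x,y) - A_0^N(x,y)} + \paren{A_0^N(x,y) - A(x,y)},
\end{align*}
and to bound $N^{2\delta}$ times the expected square of each piece \emph{uniformly} in $(x,y,s) \in \overline\Omega^2 \times [0,T]$. Since $|a+b|^2 \le 2|a|^2 + 2|b|^2$ and $\overline\Omega^2\times[0,T]$ carries a finite measure, once both pointwise bounds tend to zero as $N\to\infty$, integrating and applying Tonelli's theorem to swap the expectation and the space-time integral finishes the argument.

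For the drift term, write $A_s^N(x,y) = \frac1N\sum_{i=1}^N G(c_s^i, w_s^i, b_s^i;x,y)$, where, matching \eqref{kernel_A_def},
\begin{align*}
    G(c,w,b;x,y) := \sigma(w\cdot x + b)\sigma(w\cdot y + b) + c^2\sigma'(w\cdot x + b)\sigma'(w\cdot y + b)(x\cdot y + 1),
\end{align*}
and similarly for $A_0^N$. The key point is that, since $\sigma\in C_b^4$, $\Omega$ is precompact, and $|c_t^i|\le C$ for all $t\in[0,T]$ by Lemma \ref{lemma_param_growth}, the map $(c,w,b)\mapsto G(c,w,b;x,y)$ is Lipschitz on $\{|c|\le C\}\times\R^d\times\R$ with a constant independent of $x,y,N$: every parameter derivative of $G$ produces only bounded activations $\sigma^{(k)}$ (with $k\le 2$) multiplied by $x$, $y$, $c$, or $c^2$, all bounded on the relevant region. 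Hence
\begin{align*}
    \modu{A_s^N(x,y) - A_0^N(x,y)} \le \frac CN\sum_{i=1}^N\paren{\modu{c_s^i-c_0^i} + \nor{w_s^i-w_0^i} + \modu{b_s^i-b_0^i}} \le CN^{\delta+\beta-1},
\end{align*}
by the parameter growth bounds of Lemma \ref{lemma_param_growth}. Multiplying the square by $N^{2\delta}$ gives $CN^{4\delta+2\beta-2}$, and $\delta<\frac{1-\beta}{4}$ forces $4\delta+2\beta-2 < \beta-1 < 0$, so this vanishes.

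For the law-of-large-numbers term, Assumption \ref{assumption_neural_net} says the $(c_0^i,w_0^i,b_0^i)$ are i.i.d., and $\E[G(c_0^1,w_0^1,b_0^1;x,y)] = A(x,y)$ by definition. Therefore
\begin{align*}
    \E\brac{\modu{A_0^N(x,y)-A(x,y)}^2} = \frac1N\,\mathrm{Var}\brac{G(c_0^1,w_0^1,b_0^1;x,y)} \le \frac CN,
\end{align*}
uniformly in $(x,y)$, the variance bound following because $G$ is almost surely bounded on $\{|c_0|\le C\}$ (the $w_0,b_0$ dependence enters only through the bounded $\sigma,\sigma'$). Multiplying by $N^{2\delta}$ gives $CN^{2\delta-1}\to 0$ since $\delta<\frac{1-\beta}{4}<\frac12$. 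Combining, $\E[N^{2\delta}|A_s^N(x,y)-A(x,y)|^2]\le C(N^{4\delta+2\beta-2}+N^{2\delta-1})$, a bound uniform in $(x,y,s)$ that vanishes, and integrating over $\overline\Omega^2\times[0,T]$ concludes.

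I expect the main obstacle to be the uniform Lipschitz estimate for $G$ in the drift step: one must check that \emph{no} parameter-derivative of the kernel summand carries an unbounded factor, which relies on the almost-sure boundedness of $c_t^i$ from Lemma \ref{lemma_param_growth} (a mere moment bound would not suffice) together with the fact that $w$ and $b$ only appear inside the bounded derivatives of $\sigma$. After that, everything reduces to bookkeeping with the exponents, and the hypothesis $\delta<\frac{1-\beta}{4}$ is precisely what makes both error exponents negative.
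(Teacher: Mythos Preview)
Your proof is correct and follows essentially the same approach as the paper: both split $A_s^N - A$ into a drift term $A_s^N - A_0^N$ (bounded via the parameter-growth estimates of Lemma \ref{lemma_param_growth}) and a law-of-large-numbers term $A_0^N - A$ (bounded via the i.i.d.\ variance computation), then combine to obtain the uniform pointwise bound $N^{2\delta}\E[|A_s^N-A|^2] \le C(N^{4\delta+2\beta-2}+N^{2\delta-1})\to 0$. Your packaging of the drift estimate as a single Lipschitz bound on the summand $G$ is slightly more streamlined than the paper's explicit three-term expansion with separate mean-value-theorem applications, but the content is identical.
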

\begin{proof}
It suffices to prove that
\begin{align*}
    \lim_{N \to \infty} \int_0^T \int_{\Omega^2}N^{2\delta}\E\brac{\modu{A_s^N(x,y)-A(x,y)}^2}d\mu(x)d\mu(y)ds = 0.
\end{align*}
First, we notice that
\begin{align*}
    &\modu{A_s^N(x,y)-A^N_0(x,y)} \\&= \bigg|\frac{1}{N}\sum_{i=1}^N \sigma(w^i_s\cdot x + b_s^i)\sigma(w_s^i \cdot y + b_s^i) - \sigma(w^i_0\cdot x + b_0^i)\sigma(w_0^i \cdot y + b_0^i) \\
    &\quad + (x\cdot y +1)(c_s^i)^2\sigma'(w_s^i\cdot x + b_s^i)\sigma'(w_s^i\cdot y + b_s^i) - (x\cdot y +1)(c_0^i)^2\sigma'(w_0^i\cdot x + b_0^i)\sigma'(w_0^i\cdot y + b_0^i)\bigg| \\
    &\leq \frac{1}{N}\sum_{i=1}^N \modu{\sigma(w^i_s\cdot x + b_s^i)\sigma(w_s^i \cdot y + b_s^i) - \sigma(w^i_0\cdot x + b_0^i)\sigma(w_0^i \cdot y + b_0^i)} \\
    &\quad+ \frac{1}{N}\sum_{i=1}^N\modu{(x\cdot y +1)(c_s^i)^2\paren{\sigma'(w_s^i\cdot x + b_s^i)\sigma'(w_s^i\cdot y + b_s^i) - \sigma'(w_0^i\cdot x + b_0^i)\sigma'(w_0^i\cdot y + b_0^i)}} \\
    &\quad + \frac{1}{N}\sum_{i=1}^N\modu{(x\cdot y +1)\sigma'(w_0^i\cdot x + b_0^i)\sigma'(w_0^i\cdot y + b_0^i)\paren{(c_s^i)^2 - (c_0^i)^2}}.
\end{align*}
We deal with each of these three sums independently. For the first, notice that for each $i \in \{1,\dots, N\}$, since $\sigma \in C_b^4$, the mean value theorem combined with Lemma \ref{lemma_param_growth} gives that
\begin{align*}
    \big|\sigma(w^i_s\cdot x + b_s^i)\sigma(w_s^i \cdot y + b_s^i) &- \sigma(w^i_0\cdot x + b_0^i)\sigma(w_0^i \cdot y + b_0^i) \big| \\ &= \nor{w_s^i - w_0^i}_{l^2} \modu{\sigma'(w^*\cdot x + b_s^i)\sigma(w^*\cdot x + b_s^i) - \sigma'(w^*\cdot x + b_0^i)\sigma(w^*\cdot x + b_0^i)} \\
    &\leq C\nor{w_s^i - w_0^i}_{l^2} \\
    &\leq CN^{\delta + \beta - 1},
\end{align*}
where $w^*$ is a point in the line segment between $w_0^i$ and $w_s^i$. Thus
\begin{align*}
    \frac{1}{N}\sum_{i=1}^N \modu{\sigma(w^i_s\cdot x + b_s^i)\sigma(w_s^i \cdot y + b_s^i) - \sigma(w^i_0\cdot x + b_0^i)\sigma(w_0^i \cdot y + b_0^i)} \leq C N^{\delta + \beta - 1}.
\end{align*}
For the second sum, we again have by a mean value theorem argument that for each $i \in \{1,\dots,N\}$,
\begin{align*}
    \big|(x\cdot y +1)(c_s^i)^2&\big(\sigma'(w_s^i\cdot x + b_s^i)\sigma'(w_s^i\cdot y + b_s^i) - \sigma'(w_0^i\cdot x + b_0^i)\sigma'(w_0^i\cdot y + b_0^i)\big)\big| \\
    &\leq \big|(x\cdot y +1)(c_s^i)^2 \big[(\sigma''(w^* \cdot x + b^*)\sigma'(w^* \cdot x + b^*)\\
    &\quad+\sigma''(w^* \cdot y + b^*)\sigma'(w^* \cdot y + b^*)\big]\big|\nor{(w_s^i - w_0^i, b_s^i - b_0^i)}_{l^1}\\
    &\leq C N^{\delta+\beta-1},
\end{align*}
where $(w^*, b^*)$ is a point on the line segment between $(w_0^i, b_0^i)$ and $(w_s^i,b_s^i)$, and we have used Lemma \ref{lemma_param_growth} and that $\sigma', \sigma''$ are bounded. Thus
\begin{align*}
    \frac{1}{N}\sum_{i=1}^N\modu{(x\cdot y +1)(c_s^i)^2\paren{\sigma'(w_s^i\cdot x + b_s^i)\sigma'(w_s^i\cdot y + b_s^i) - \sigma'(w_0^i\cdot x + b_0^i)\sigma'(w_0^i\cdot y + b_0^i)}} &\leq CN^{\delta+\beta-1}.
\end{align*}
Finally, it is straightforward that
\begin{align*}
    \frac{1}{N}\sum_{i=1}^N\big|(x\cdot y +1)\sigma'(&w_0^i\cdot x + b_0^i)\sigma'(w_0^i\cdot y + b_0^i)\paren{(c_s^i)^2 - (c_0^i)^2}\big| \\
    &\leq \frac{\modu{(x\cdot y +1)\sigma'(w_0^i\cdot x + b_0^i)\sigma'(w_0^i\cdot y + b_0^i)}}{N}\sum_{i=1}^N \modu{(c_s^i)^2 - (c_0^i)^2} \\
    &\leq \frac{C}{N} \sum_{i=1}^N  \modu{c_s^i + c_0^i}\modu{c_s^i - c_0^i} \\
    &\leq \frac{C}{N}\sum_{i=1}^N N^{\delta + \beta - 1} \\
    &\leq C N^{\delta + \beta - 1}.
\end{align*}
Hence
\begin{align*}
    \modu{A_s^N(x,y)-A^N_0(x,y)} \leq CN^{\delta+\beta-1},
\end{align*}
and in particular
\begin{align*}
    \modu{A_s^N(x,y)-A^N_0(x,y)}^2 \leq CN^{2\delta+2\beta-2}.
\end{align*}
Next, since $A_0^N(x,y)$ is the mean of $N$ independent and identically distributed random variables with finite variance (namely, $\sigma(w^i_0\cdot x + b_0^i)\sigma(w_0^i \cdot y + b_0^i) + (x\cdot y +1)(c_0^i)^2\sigma'(w_0^i\cdot x + b_0^i)\sigma'(w_0^i\cdot y + b_0^i)$), we have
\begin{align*}
    \mathbb{E}\brac{\modu{{A}_0^N(x,y)-A(x,y)}^2}&=\mathbb{E}\brac{\paren{A_0^N(x,y)-\mathbb{E}[A_0^N(x,y)]}^2}=\text{Var}[{A}_0^N(x,y)]
    \leq \frac{C}{N}.
\end{align*}
Combining the inequalities above gives
\begin{align*}
    N^{2\delta} \mathbb{E}\brac{\modu{A_s^N(x,y)-A(x,y)}^2} &\leq 2N^{2\delta} \E\brac{(A_s^N(x,y) - A_0^N(x,y))^2 + (A_0^N(x,y) - A(x,y))^2}\\
    &\leq C N^{4\delta+\beta-1}+ CN^{2\delta-1}.
    \end{align*}
Thus
\begin{align*}
    \int_0^T \int_{\Omega^2}N^{2\delta}\E\brac{\modu{A_s^N(x,y)-A(x,y)}^2}d\mu(x)d\mu(y)ds \leq C N^{4\delta+\beta-1}+ CN^{2\delta-1} \to 0.
\end{align*}
\end{proof}

\begin{lemma} \label{l9}
Let $B_s^N(x,y) = \eta(x)\eta(y)A_s^N(x,y)$ and $B = \lim_{N\to\infty} B_0^N$ be the infinite-width limit at time 0 (i.e.~the kernel in \eqref{kernel_B_def}). Denote
\begin{align}
    S_s^N(x,y) := \sum_{|\alpha|\leq 2} \modu{D_\alpha^y B(x,y)-D_\alpha^y B^N_s(x,y)}.
\end{align}
Then
\begin{align}
    \lim_{N\to\infty}\mathbb{E}\brac{\int_0^T \int_{\Omega^2} N^{2\delta} S_s^N(x,y)^2 d\mu(x)d\mu(y)ds} = 0.      
\end{align}
\end{lemma}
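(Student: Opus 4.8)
The plan is to follow the template of the proof of Lemma~\ref{l8}, first stripping the $\eta$-factors to reduce to the kernel $A$, and then decomposing the error into an ``initialization'' piece $B_0^N-B$ and a ``flow'' piece $B_s^N-B_0^N$. For the reduction, note that $B(x,y)=\eta(x)\eta(y)A(x,y)$ and $B_s^N(x,y)=\eta(x)\eta(y)A_s^N(x,y)$, so by the Leibniz rule, for every multi-index $\modu{\alpha}\le2$,
\[
    D_\alpha^y\paren{B-B_s^N}(x,y)=\eta(x)\sum_{\beta\le\alpha}\binom{\alpha}{\beta}D_\beta^y\eta(y)\,D_{\alpha-\beta}^y\paren{A-A_s^N}(x,y).
\]
Since $\eta\in C^3(\R^d)$ and $\overline\Omega$ is compact, $\eta$ and all of its derivatives appearing here are bounded, so there is a constant $C$ with $S_s^N(x,y)\le C\sum_{\modu{\gamma}\le2}\modu{D_\gamma^y(A-A_s^N)(x,y)}$. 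It therefore suffices to show that, for each multi-index $\gamma$ with $\modu{\gamma}\le2$, the quantity $N^{2\delta}\E\modu{D_\gamma^y(A-A_s^N)(x,y)}^2$ is bounded uniformly in $(x,y)\in\overline\Omega^2$ and $s\in[0,T]$ by a sequence tending to $0$.

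Next I would handle the flow piece. Differentiating the explicit formula for $A_s^N$ term by term, $D_\gamma^y$ applied to the $i$-th summand $\sigma(w_s^i\cdot x+b_s^i)\sigma(w_s^i\cdot y+b_s^i)+(x\cdot y+1)(c_s^i)^2\sigma'(w_s^i\cdot x+b_s^i)\sigma'(w_s^i\cdot y+b_s^i)$ is a finite sum of products of derivatives of $\sigma$ of order at most three, the uniformly bounded factor $x\cdot y+1$, monomials of degree at most two in the components of $w_s^i$, and $(c_s^i)^2$. Comparing time $s$ with time $0$ by the same mean value theorem argument as in the proof of Lemma~\ref{l8} — which costs one additional derivative of $\sigma$, so that $\sigma\in C^4_b$ is used here — together with the parameter growth estimates $\modu{c_t^i-c_0^i},\modu{w_t^{i,j}-w_0^{i,j}},\modu{b_t^i-b_0^i}\le CN^{\delta+\beta-1}$ of Lemma~\ref{lemma_param_growth} and the bounds $\modu{c_t^i}\le C$, one obtains the pointwise estimate $\sum_{\modu{\gamma}\le2}\modu{D_\gamma^y(A_s^N-A_0^N)(x,y)}\le CN^{\delta+\beta-1}$, uniformly in $(x,y)$ and $s\in[0,T]$; hence $N^{2\delta}\modu{D_\gamma^y(A_s^N-A_0^N)(x,y)}^2\le CN^{4\delta+2\beta-2}$.

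For the initialization piece, I would write $D_\gamma^y A_0^N(x,y)=\frac1N\sum_{i=1}^N\zeta_i(x,y)$, where $\zeta_i(x,y)$ is $D_\gamma^y$ applied to the $i$-th summand; the $\zeta_i$ are i.i.d., and interchanging $D_\gamma^y$ with the expectation (justified as in the proof of Lemma~\ref{lemma_kernel_regularity}, using $\sigma\in C^4_b$ and the moment bounds of Assumption~\ref{assumption_neural_net}) identifies $\E[\zeta_1(x,y)]=D_\gamma^y A(x,y)$. Since $\zeta_1$ is a product of bounded functions of $(w_0^i,b_0^i)$, the bounded factor $x\cdot y+1$, a monomial of degree at most two in the components of $w_0^i$, and $(c_0^i)^2$ with $\modu{c_0^i}\le C$, the moment bounds in Assumption~\ref{assumption_neural_net} give $\E[\zeta_1(x,y)^2]\le C$, so $\E\modu{D_\gamma^y(A_0^N-A)(x,y)}^2=\frac1N\mathrm{Var}(\zeta_1(x,y))\le C/N$, uniformly in $(x,y)$. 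Combining the two pieces yields $N^{2\delta}\,\E\,S_s^N(x,y)^2\le CN^{4\delta+2\beta-2}+CN^{2\delta-1}$; integrating over $\Omega^2\times[0,T]$ and letting $N\to\infty$ then finishes the proof, since $\delta\in\paren{0,\tfrac{1-\beta}{4}}$ forces both $4\delta+2\beta-2\le 4\delta+\beta-1<0$ and $2\delta-1<0$.

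I expect the main obstacle to be the bookkeeping in the flow estimate: one must track that after taking up to two $y$-derivatives of each summand — which already introduces $\sigma''$, $\sigma'''$ and degree-two monomials in $w$ — the extra mean value theorem in the parameters used to compare times $s$ and $0$ still lands inside $C^4_b$, and that the perturbed-parameter polynomial factors remain bounded via Lemma~\ref{lemma_param_growth}. A secondary point to check carefully is that the moment hypotheses on $w_0^i$ in Assumption~\ref{assumption_neural_net} suffice to make $\E[\zeta_1(x,y)^2]$ finite for the top-order ($\modu{\gamma}=2$) derivatives, where degree-two monomials in the components of $w_0^i$ appear in $\zeta_1$.
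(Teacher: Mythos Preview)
Your proposal is correct and follows essentially the same approach as the paper: decompose into a flow piece $B_s^N-B_0^N$, bounded via the mean value theorem in the parameters together with Lemma~\ref{lemma_param_growth}, and an initialization piece $B_0^N-B$, bounded as the variance of an i.i.d.\ average, yielding the same $CN^{4\delta+2\beta-2}+CN^{2\delta-1}$ estimate. The only cosmetic difference is that you first strip off the $\eta$-factors via the Leibniz rule and work with $D_\gamma^y(A-A_s^N)$, whereas the paper keeps $\eta$ inside and packages each summand of $D_\alpha^y B_s^N$ directly as a single function $R(c,w,b)$ of the parameters before applying the mean value theorem.
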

\begin{proof}
    The proof is very similar to that of Lemma \ref{l8}. It suffices to show that for arbitrary multi-index $\alpha$ with $\modu{\alpha} \leq 2$, we have
    \begin{align*}
        \lim_{N \to \infty} \int_0^T \int_{\Omega^2}N^{2\delta}\mathbb{E}\brac{\modu{D_\alpha^y B(x,y)-D_\alpha^y B^N_s(x,y)}^2} d\mu(x)d\mu(y) ds = 0.
    \end{align*}
    Since $\eta\in C_b^3$ and $\sigma \in C_b^4$, it follows for $\modu{\alpha} \leq 2$ and any $i \in \{1,\dots,N\}$ that
    \begin{align*}
        D_\alpha^y \paren{\eta(x)\eta(y)\nabla_{c^i,w^i,b^i}\brac{c^i_s \sigma(w^i_s\cdot x+b^i_s)}\cdot \nabla_{c^i,w^i,b^i}\brac{c^i_s \sigma(w^i_s\cdot y+b^i_s)}} := R(c_s^i,w_s^i,b_s^i) \in C^1_b
    \end{align*} as a function of the parameters $(c_s^i,w_s^i,b_s^i)$ with a $C^1$-bound independent of $(x,y)\in \Omega^2$. Then by the mean value theorem (as in the proof of Lemma \ref{l8}),
    \begin{align*}
        \modu{R(c_s^i,w_s^i,b_s^i) - R(c_0^i,w_0^i,b_0^i)} &\leq \paren{\|c_s^i-c_0^i\|+ \|w_s^i-w_0^i\|+\|b_s^i-b_0^i\|} \sup_{c,w,b}\nor{\nabla_{c,w,b}R(c,w,b)} \\
        &\leq C\paren{\|c_s^i-c_0^i\|+ \|w_s^i-w_0^i\|+\|b_s^i-b_0^i\|} \\
        &\leq CN^{\delta + \beta - 1},
    \end{align*}
    where we use the bounds on the parameter increments provided by Lemma \ref{lemma_param_growth}. Thus
    \begin{align*}
        \modu{D_\alpha^y B_0^N(x,y)-D_\alpha^y B^N_s(x,y)}^2 &\leq \frac{1}{N} \sum_{i=1}^N \modu{R(c_s^i,w_s^i,b_s^i) - R(c_0^i,w_0^i,b_0^i)}^2\\
        &\leq \frac{C}{N}\sum_{i=1}^N N^{2\delta+2\beta-2} \\
        &\leq CN^{2\delta+2\beta-2},
    \end{align*}
    and so
    \begin{align*}
     \mathbb{E}\brac{\modu{D_\alpha^y B_0^N(x,y)-D_\alpha^y B^N_s(x,y)}^2} \leq C N^{2\delta+2\beta-2}.
    \end{align*}
    On the other hand, $D_\alpha^y B^N_0(x,y)$ is the mean of $N$ independent and identically distributed random variables with finite variance. Thus
    \begin{align*}
        \mathbb{E}\brac{\modu{D_\alpha^y B_0^N(x,y)-D_\alpha^y B(x,y)}^2}=\mathbb{E}\brac{\paren{D_\alpha^y B_0^N(x,y)-\mathbb{E}[D_\alpha^y B_0^N(x,y)]}^2} = \text{Var}[D_\alpha^y B_0^N(x,y)] \leq \frac{C}{N}.
    \end{align*}
    Combining the inequalities above gives
    \begin{align*}
        N^{2\delta} \mathbb{E}\brac{\modu{D_\alpha^y B(x,y)-D_\alpha^y B^N_s(x,y)}^2} &\leq 2N^{2\delta}\E\brac{(D_\alpha^yB_s^N(x,y) - D_\alpha B_0^N(x,y))^2 + (D_\alpha^yB_0^N(x,y) - D_\alpha^yB(x,y))^2}\\
        &\leq CN^{4\delta+\beta-1}+ CN^{2\delta-1}.
    \end{align*}
    Thus
    \begin{align*}
        \int_0^T \int_{\Omega^2}N^{2\delta}\mathbb{E}\brac{\modu{D_\alpha^y B(x,y)-D_\alpha^y B^N_s(x,y)}^2} d\mu(x)d\mu(y) ds = CN^{4\delta+\beta-1}+ CN^{2\delta-1} \to 0.
    \end{align*}
\end{proof}

\begin{lemma} \label{lemma_dominated_convergence}
    We have that
    \begin{align}
        \lim_{N \to \infty}\E\brac{\int_0^T \int_\Omega \modu{\g^{U_s}Q_s(x)}^2\mathbf{1}_{\{\modu{\g^{U_s}Q_s} \geq N^\delta\}}(x) + \modu{\partial_uH(U_s,Q_s)(x)}^2\mathbf{1}_{\modu{\partial_uH(U_s,Q_s)} \geq N^\delta\}}(x)d\mu(x)ds} = 0.
    \end{align}
\end{lemma}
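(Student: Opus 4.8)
The plan is to show that, along the limit flow, both $\mathcal{L}^{U_s}Q_s$ and $\partial_u H(U_s,Q_s)$ are bounded in supremum norm over $[0,T]\times\Omega$ by a constant that does not depend on $N$; once $N^\delta$ exceeds that constant the truncation indicators are identically zero, and the quantity inside the limit is literally $0$ for all large $N$. Note at the outset that the limit functions $Q_s,U_s$ are deterministic (the ODE \eqref{limit_ODE} has deterministic data and dynamics), so the outer expectation is vacuous and plays no role.

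First I would record the uniform regularity of the flow on $[0,T]$. Lemma \ref{QQ} already gives $\sup_{s\in[0,T]}\sum_{|\alpha|\le 2}\|D_\alpha Q_s\|_{C^0(\overline\Omega)}\le C$. For the actor, the existence-and-uniqueness lemma for the limit ODE states that $s\mapsto U_s$ is continuous from $[0,T]$ into the Banach space $C^0(\overline\Omega)$; since $[0,T]$ is compact, this trajectory is bounded, so $\sup_{s\in[0,T]}\|U_s\|_{C^0(\overline\Omega)}<\infty$. This compactness-of-the-trajectory observation is the only step that requires a moment's thought; everything else is bookkeeping.

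Next I would invoke Assumption \ref{assumption_bounded_second_derivative}, which bounds $b_i$, $(\Phi\Phi^\intercal)_{ij}$, $c$ together with their first and second $u$-derivatives uniformly on $\Omega\times A$. Writing out the operator,
\begin{align*}
    |\mathcal{L}^{U_s}Q_s(x)| \le \sum_i \|b_i\|_\infty\,|\partial_i Q_s(x)| + \tfrac12\sum_{i,j}\|(\Phi\Phi^\intercal)_{ij}\|_\infty\,|\partial^2_{ij}Q_s(x)| + \gamma|Q_s(x)| + \|c\|_\infty,
\end{align*}
which is bounded by a constant $C_0$ for all $(s,x)\in[0,T]\times\Omega$ by the previous step. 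The same computation with $\partial_u b_i$, $\partial_u(\Phi\Phi^\intercal)_{ij}$, $\partial_u c$ in place of $b_i$, $(\Phi\Phi^\intercal)_{ij}$, $c$ gives $|\partial_u H(U_s,Q_s)(x)|\le C_0$ on $[0,T]\times\Omega$ as well; this is essentially Proposition \ref{proposition_lipschitz} together with the boundedness of $c$ and $\partial_u c$.

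Finally, since $\delta>0$ there is $N_0$ with $N^\delta>C_0$ for all $N\ge N_0$; for such $N$ the sets $\{x\in\Omega:|\mathcal{L}^{U_s}Q_s(x)|\ge N^\delta\}$ and $\{x\in\Omega:|\partial_u H(U_s,Q_s)(x)|\ge N^\delta\}$ are empty for every $s\in[0,T]$, so the entire integrand vanishes and the limit is trivially $0$. Equivalently one may apply dominated convergence: the integrand is dominated by $C_0^2\,\mathbf{1}_{[0,T]\times\Omega}$, which is $\mathrm{Leb}\otimes\mu$-integrable since $\mu(\Omega)<\infty$, and it tends to $0$ pointwise as $N\to\infty$. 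The only mild obstacle is the uniform-in-$s$ control of $\|U_s\|_{C^0}$ flagged above; with that in hand the statement is immediate.
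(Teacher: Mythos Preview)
Your proposal is correct and follows the same approach as the paper's proof, which simply observes (via Lemma~\ref{QQ} and Assumption~\ref{assumption_bounded_second_derivative}) that $|\mathcal{L}^{U_s}Q_s|$ and $|\partial_u H(U_s,Q_s)|$ are bounded by a constant on $[0,T]\times\Omega$ and then invokes dominated convergence. One small remark: your detour through bounding $\|U_s\|_{C^0}$ is in fact unnecessary, since Assumption~\ref{assumption_bounded_second_derivative} already bounds $b_i$, $(\Phi\Phi^\intercal)_{ij}$, $c$ and their $u$-derivatives uniformly over \emph{all} of $\Omega\times A$, so the value of $U_s(x)$ never enters your displayed estimates.
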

\begin{proof}
    By Lemma \ref{QQ} and Assumption \ref{assumption_bounded_second_derivative}, it is clear that there exists a constant $C$ such that for each $s \in [0,T]$ and $x \in \Omega$,
    \begin{align*}
        \modu{\g^{U_s}Q_s(x)} \leq C \quad \text{ and } \quad \modu{\partial_uH(U_s,Q_s)(x)} \leq C.
    \end{align*}
    Thus, the dominated convergence theorem (on the product $\sigma$-algebra) gives the desired result.
\end{proof}

\begin{proof}[\textbf{Proof of Theorem \ref{thm_limit_ode}}]
    Integrating the dynamics of $Q^N$ and $Q$ with respect to time gives
    \begin{align*}
        Q_t^N(y) = Q_0^N(y)+ \int_0^t \omega_t \int_\Omega F^N\paren{\mathcal{L}^{U_s^N}Q_s^N(x)}B_s^N(x,y)d\mu(x) ds
    \end{align*}
    and
    \begin{align*}
        Q_t(y)=Q_0(y)+\int_0^t \omega_t \int_\Omega \mathcal{L}^{U_s}Q_s(x) B(x,y)d\mu(x)ds.
    \end{align*}
    Then for arbitrary multi-index $\alpha$ with $\modu{\alpha} \leq 2$,
    \begin{align*}
        D_\alpha(Q_t^N-Q_t)(y)&=D_\alpha (Q_0^N-Q_0)(y)+ \int_0^t \omega_t \int_\Omega F^N\paren{\mathcal{L}^{U_s^N}Q_s^N(x)}D_\alpha^y B_s^N(x,y) - \mathcal{L}^{U_s}Q_s(x) D_\alpha^y B(x,y)d\mu(x) ds.
    \end{align*}
    We may bound the integral above by four integrals that are individually easier to handle:
    \begin{align}
        \begin{split}
            |D_\alpha(Q_t^N-Q_t)(y)| \leq |D_\alpha (Q_0^N-Q_0)(y)| &+ C\int_0^t \int_\Omega |F^N(\mathcal{L}^{U_s^N}Q_s^N(x))||D_\alpha^y B_s^N(x,y)-D_\alpha^y B(x,y)|d\mu(x) ds \\
            &+ C\int_0^t \int_\Omega |F^N(\mathcal{L}^{U_s^N}Q_s^N(x))-F^N(\mathcal{L}^{U_s^N}Q_s(x))||D_\alpha^y B(x,y)|d\mu(x) ds\\
            &+ C\int_0^t \int_\Omega |F^N(\mathcal{L}^{U_s^N}Q_s(x))-F^N(\mathcal{L}^{U_s}Q_s(x))||D_\alpha^y B(x,y)|d\mu(x) ds\\
            &+ C\int_0^t \int_\Omega |F^N(\mathcal{L}^{U_s}Q_s^N(x))-\mathcal{L}^{U_s}Q_s(x)||D_\alpha^y B(x,y)|d\mu(x) ds.
        \end{split}
    \end{align}
    Using that $|F^N(x)|\leq 2N^\delta$ and $\modu{F^N(x) - x} \leq 2\modu{x}\mathbf{1}_{\{x \geq N^\delta\}}$, each $F^N$ is Lipschitz with a uniform constant, Proposition \ref{proposition_lipschitz}, Lemma \ref{QQ}, and that there exists a constant $C$ such that $\sum_{\modu{\alpha} \leq 2}|D_\alpha^y B(x,y)|\leq C$, we have
    \begin{align} \label{Q_split}
    \begin{split}
        |D_\alpha(Q_t^N-Q_t)(y)|
        &\leq |D_\alpha (Q_0^N-Q_0)(y)| + C\int_0^t\int_\Omega N^\delta |D_\alpha^y B_s^N(x,y)-D_\alpha^y B_s(x,y)|d\mu(x) ds \\
        &\quad+C \int_0^t \int_\Omega \sum_{\modu{\alpha}\leq 2}\modu{D_\alpha \paren{Q_s^N - Q_s}(x)}d\mu(x) ds\\
        &\quad+C \int_0^t \int_\Omega |U_s^N(x)-U_s(x)|\sum_{\modu{\alpha}\leq 2}|D_\alpha Q_s(x)|d\mu(x) ds\\
        &\quad+ C\int_0^t \int_\Omega |\mathcal{L}^{U_s}Q_s(x)|\mathbf{1}_{\{|\mathcal{L}^{U_s}Q_s|\geq N^{\delta}\}}(x) d\mu(x) ds\\
        &\leq |D_\alpha (Q_0^N-Q_0)(y)| + C\int_0^t\int_\Omega N^\delta |D_\alpha^y B_s^N(x,y)-D_\alpha^y B_s(x,y)|d\mu(x) ds \\
        &\quad+C \int_0^t \int_\Omega \sum_{|\alpha|\leq 2}|D_\alpha(Q_s^N-Q_s)(x)|+|U_s^N(x)-U_s(x)|d\mu(x) ds\\
        &\quad+ C\int_0^t \int_\Omega |\mathcal{L}^{U_s}Q_s(x)|\mathbf{1}_{\{|\mathcal{L}^{U_s}Q_s|\geq N^{\delta}\}}(x) d\mu(x) ds.
    \end{split}
    \end{align}
    Similarly, for the actor process $U_t^N$, we have
    \begin{align} \label{UN_growth}
        U_t^N(y)=U_0^N(y)-\int_0^t \alpha_s \int_\Omega \psi^N(\partial_u H(U_s^N,Q_s^N)(x))A^N_s(x,y)d\mu(x) ds,
    \end{align}
    and
    \begin{align} \label{U_growth}
        U_t(y)=U_0(y)-\int_0^t \alpha_s\int_\Omega \partial_u H(U_s,Q_s)(x){A}(x,y)d\mu(x) ds.
    \end{align}
    Subtracting \eqref{U_growth} from \eqref{UN_growth}, splitting the integral, and using that $\alpha_s \leq C$ gives
    \begin{align} \label{U_diff_growth}
        \begin{split}
            |U_t^N(y)-U_t(y)| &\leq |U_0^N(y)-U_0(y)| + C \int_0^t \int_\Omega |\psi^N(\partial_u H(U_s^N,Q_s^N)(x))||A_s^N(x,y)-A(x,y)|d\mu(x)ds\\
            &\quad + C\int_0^t \int_\Omega |\psi^N(\partial_u H(U_s^N,Q_s^N)(x))-\psi^N(\partial_u H(U_s,Q_s)(x))||A(x,y)|d\mu(x)ds\\
            &\quad + C\int_0^t \int_\Omega |\psi^N(\partial_u H(U_s,Q_s)(x))-\partial_u H(U_s,Q_s)(x)||A(x,y)|d\mu(x)ds\\
            &\leq |U_0^N(y)-U_0(y)| + C\int_0^t \int_\Omega N^\delta |A_s^N(x,y)-A(x,y)|d\mu(x)ds\\
            &\quad + C\int_0^t \int_\Omega |\partial_u H(U_s^N,Q_s^N)(x)-\partial_u H(U_s,Q_s)(x)||A(x,y)|d\mu(x)ds\\
            &\quad + C \int_0^t \int_\Omega |\partial_u H(U_s,Q_s)(x)|\mathbf{1}_{\{|\partial_u H(U_s,Q_s)|\geq N^\delta\}}(x)|A(x,y)|d\mu(x)ds.
        \end{split}
    \end{align}
    Notice that by Proposition \ref{proposition_lipschitz} and Lemma \ref{QQ},
    \begin{align*}
        &|\partial_u H(U_s^N,Q_s^N)(x)-\partial_u H(U_s,Q_s)(x)|\\ &\leq |\partial_u H(U_s^N,Q_s^N)(x)-\partial_u H(U_s^N,Q_s)(x)|+|\partial_u H(U_s^N,Q_s)(x)-\partial_u H(U_s,Q_s)(x)|\\
        &\leq C \sum_{|\alpha|\leq 2}|D_\alpha(Q_s^N-Q_s)(x)|+ C|U_s^N(x)-U_s(x)|\sum_{|\alpha|\leq 2}|D_\alpha Q_s(x)| \\
        &\leq C \paren{\sum_{|\alpha|\leq 2}|D_\alpha(Q_s^N-Q_s)(x)|+ |U_s^N(x)-U_s(x)|}.
    \end{align*}
    Using this and applying the bound $|A(x,y)| \leq C$, we have from \eqref{U_diff_growth} that
    \begin{align} \label{U_diff_bound}
        \begin{split}
            \modu{U_t^N(y)-U_t(y)}&\leq \modu{U_0^N(y)-U_0(y)} + C\int_0^t \int_\Omega N^\delta\modu{A_s^N(x,y)-A(x,y)}d\mu(x)ds\\
            &\quad + C \int_0^t\int_\Omega \sum_{|\alpha|\leq 2}\modu{D_\alpha(Q_s^N-Q_s)(x)}+\modu{U_s^N(x)-U_s(x)}d\mu(x)ds\\
            &\quad + C \int_0^t \int_\Omega \modu{\partial_u H(U_s,Q_s)(x)}\mathbf{1}_{\{|\partial_u H(U_s,Q_s)|\geq N^\delta\}}(x)d\mu(x)ds.
        \end{split}
    \end{align}
    Summing up the inequalities \eqref{Q_split} for all multi-indices $\modu{\alpha} \leq 2$ and adding \eqref{U_diff_bound} gives
    \begin{align} \label{Xi_bound}
        \Xi_t^N(y) \leq \Bar{J}_t^N(y)+C\int_0^t \int_\Omega \Xi^N_s(x) d\mu(x)ds,
    \end{align}
    where we define
    \begin{align}
        \Xi_t^N(y):=|U_t^N(y)-U_t(y)|+ \sum_{|\alpha|\leq 2}|D_\alpha(Q_t^N-Q_t)(y)|
    \end{align}
    and a residual term
    \begin{align}
        \begin{split}
            \Bar{J}_t^N(y):&=C \bigg[ \int_0^t \int_\Omega N^\delta \modu{\Bar{A}_s^N(x,y)-A(x,y)}d\mu(x)ds + \int_0^t\int_\Omega N^\delta S_s^N(x,y)dx ds\\
            &\quad + \int_0^t \int_\Omega \modu{\mathcal{L}^{U_s}Q_s(x)}\mathbf{1}_{\curlbrac{|\mathcal{L}^{U_s}Q_s|\geq N^{\delta}}}(x)d\mu(x)ds \\
            &\quad+ \int_0^t \int_\Omega\modu{\partial_u H(U_s,Q_s)(x)}\mathbf{1}_{\curlbrac{|\partial_u H(U_s,Q_s)|\geq N^\delta}}(x) d\mu(x) ds+\Xi_0^N(y) \bigg].
        \end{split}
    \end{align}
    Here, $S_s^N(x,y):=\sum_{|\alpha|\leq 2} |D_\alpha^y B_s^N(x,y)-D_\alpha^y B_s^N(x,y)|$. Multiplying by $\Xi_t^N(y)$ on both sides of \eqref{Xi_bound}, integrating, and using the Cauchy and Jensen inequalities gives
    \begin{align*}
        \int_\Omega \Xi_t^N(y)^2 d\mu(y) & \leq \int_\Omega \Xi_t^N(y)\Bar{J}_t^N(y) d\mu(y) + \int_0^t \int_{\Omega^2} \Xi_s^N(x)\Xi_t^N(y)d\mu(x)d\mu(y)ds \\
        & \leq C\paren{\int_\Omega \Xi_t^N(y)^2 d\mu(y)}^{\frac{1}{2}}\brac{J_T^N + \int_0^t \paren{\int_\Omega \Xi_s^N(y)^2 d\mu(y)}^{\frac{1}{2}} ds},
    \end{align*}
    where repeatedly using the Cauchy inequality and that $t \leq T$ inspires setting
    \begin{align}
        \begin{split}
            J_T^N&= \paren{\int_\Omega \Xi_0^N(y)^2 d\mu(y)}^{\frac{1}{2}} + \int_0^T\paren{ \int_{\Omega^2} N^{2\delta}|\Bar{A}_s^N(x,y)-A(x,y)|^2d\mu(x)d\mu(y)ds}^{\frac{1}{2}}ds\\
            &\quad +\int_0^T \paren{\int_{\Omega^2} N^{2\delta} S_s^N(x,y)^2 d\mu(x)d\mu(y)}^{\frac{1}{2}}ds+\int_0^T \int_\Omega \modu{\mathcal{L}^{U_s}Q_s(x)}\mathbf{1}_{\{|\mathcal{L}^{U_s}Q_s|\geq N^{\delta}\}}(x)d\mu(x) ds\\
            &\quad +\int_0^T \int_\Omega \modu{\partial_u H(U_s,Q_s)(x)}\mathbf{1}_{\{|\partial_u H(U_s,Q_s)|\geq N^\delta\}}(x) d\mu(x) ds.
        \end{split}
    \end{align}
    Therefore
    \begin{align*}
        \paren{\int_\Omega \Xi_t^N(y)^2 d\mu(y)}^{\frac{1}{2}} \leq J_T^N + C \int_0^t \paren{\int_\Omega \Xi_s^N(y)^2 d\mu(y)}^{\frac{1}{2}} ds,
    \end{align*}
    and so by Gronwall's inequality,
    \begin{align*}
        \paren{\int_\Omega \Xi_t^N(y)^2 d\mu(y)}^{\frac{1}{2}} \leq J_T^N e^{Ct}.
    \end{align*}
    Squaring this relation and taking expectations with respect to the parameter initialization gives
    \begin{align*}
        \E\brac{\int_\Omega \Xi_t^N(y)^2 d\mu(y)} \leq \mathbb{E}\brac{(J_T^N)^2}e^{2Ct}.
    \end{align*}
    Observe that by squaring $J_T^N$, using Jensen's inequality, and taking expectations, we have
    \begin{align}
    \begin{split}
        \E\brac{(J_T^N)^2} &\leq  C\E\bigg[\int_\Omega \Xi_0^N(y)^2 d\mu(y)+\int_0^T \int_{\Omega^2} N^{2\delta}|\Bar{A}_s^N(x,y)-A(x,y)|^2d\mu(x)d\mu(y)dsds\\
        &\quad +\int_0^T \int_{\Omega^2} N^{2\delta} S_s^N(x,y)^2 d\mu(x)d\mu(y) ds+\int_0^T \int_\Omega \modu{\mathcal{L}^{U_s}Q_s(x)}^2\mathbf{1}_{\{|\mathcal{L}^{U_s}Q_s|\geq N^{\delta}\}}(x)\\
        &\quad +\modu{\partial_u H(U_s,Q_s)(x)}^2\mathbf{1}_{\{|\partial_u H(U_s,Q_s)|\geq N^\delta\}}(x) d\mu(x) ds\bigg].
    \end{split}
    \end{align}
    Then as $N\to\infty$, the first integral disappears by Lemma \ref{l7}, the second by Lemma \ref{l8}, the third by Lemma \ref{l9}, and the fourth by Lemma \ref{lemma_dominated_convergence}. Thus
    \begin{align*}
        \lim_{N \to \infty} \E\brac{\int_\Omega \Xi_t^N(y)^2 d\mu(y)} = 0.
    \end{align*}
    Further,
    \begin{align*}
        \|U_t^N-U_t\|_{L^2}^2+\|Q_t^N-Q_t\|_{\mathcal{H}^2}^2 &= \int_\Omega \bigg( |U_t^N(y)-U_t(y)|^2+ \sum_{|\alpha|\leq 2}|D_\alpha(Q_t^N-Q_t)(y)|^2\bigg) d\mu(y) \\
        &\leq C \int_\Omega \Xi_t^N(y)^2 d\mu(y).
    \end{align*}
    Therefore
    \begin{align*}
        \lim_{N\to\infty}\E\brac{\nor{U_t^N-U_t}_{L^2}^2+\nor{Q_t^N-Q_t}_{\h^2}^2} = 0.
    \end{align*}
    Since the original training time bound $T\geq t$ is arbitrary, we conclude that this holds for any $t \geq 0$. A final round of applying Jensen's inequality then gives \eqref{eq_limit_ode_convergence}.
\end{proof}
\section{Acknowledgments}
    The authors acknowledge support from His Majesty’s Government in the development of this research. SC acknowledges the support of the UKRI Prosperity Partnership Scheme (FAIR) under EPSRC Grant EP/V056883/1. DJ's research was supported by the EPSRC Centre for Doctoral Training in Industrially Focused Mathematical Modelling (InFoMM) (EP/L015803/1).

    The order of authorship in this paper was determined alphabetically.
    
\newpage

\bibliography{refs}

\begin{thebibliography}{10}

\bibitem{al2022extensions}
Ali Al-Aradi, Adolfo Correia, Gabriel Jardim, Danilo de~Freitas~Naiff, and Yuri Saporito.
\newblock Extensions of the deep {Galerkin} method.
\newblock {\em Applied Mathematics and Computation}, 430:127287, 2022.

\bibitem{almgren2001optimal}
Robert Almgren and Neil Chriss.
\newblock Optimal execution of portfolio transactions.
\newblock {\em Journal of Risk}, 3:5--40, 2001.

\bibitem{beck2019}
Christian Beck, Weinan E, and Arnulf Jentzen.
\newblock Machine learning approximation algorithms for high-dimensional fully nonlinear partial differential equations and second-order backward stochastic differential equations.
\newblock {\em Journal of Nonlinear Science}, 29:1563--1619, 2019.

\bibitem{carmona2021deep}
Ren{\'e} Carmona and Mathieu Lauri{\`e}re.
\newblock Deep learning for mean field games and mean field control with applications to finance.
\newblock {\em arXiv preprint arXiv:2107.04568}, 7, 2021.

\bibitem{cheridito2025}
Patrick Cheridito, Jean-Loup Dupret, and Donatien Hainaut.
\newblock Deep learning for continuous-time stochastic control with jumps.
\newblock {\em arXiv preprint arXiv:2505.15602}, 2025.

\bibitem{JMLR:v24:22-1075}
Samuel~N. Cohen, Deqing Jiang, and Justin Sirignano.
\newblock {Neural Q-learning for solving PDEs}.
\newblock {\em Journal of Machine Learning Research}, 24(236):1--49, 2023.

\bibitem{duarte2024}
Victor Duarte, Diogo Duarte, and Dejanir~H Silva.
\newblock Machine learning for continuous-time finance.
\newblock {\em The Review of Financial Studies}, 37, 2024.

\bibitem{feng2025}
Qi~Feng and Gu~Wang.
\newblock Continuous policy and value iteration for stochastic control problems and its convergence.
\newblock {\em arXiv preprint arXiv:2506.08121}, 2025.

\bibitem{han2016deep}
Jiequn Han et~al.
\newblock Deep learning approximation for stochastic control problems.
\newblock {\em arXiv preprint arXiv:1611.07422}, 2016.

\bibitem{han2018solving}
Jiequn Han, Arnulf Jentzen, and Weinan E.
\newblock Solving high-dimensional partial differential equations using deep learning.
\newblock {\em Proceedings of the National Academy of Sciences}, 115(34):8505--8510, 2018.

\bibitem{hofgard2024}
William Hofgard, Jingruo Sun, and Asaf Cohen.
\newblock Convergence of the deep galerkin method for mean field control problems.
\newblock {\em arXiv preprint arXiv:2405.13346}, 2024.

\bibitem{howard1960}
Ronald Howard.
\newblock {\em Dynamic programming and {M}arkov processes}.
\newblock MIT Press, 1960.

\bibitem{hu2024}
Riumeng Hu and Mathieu Lauri\`{e}re.
\newblock Recent developments in machine learning methods for stochastic control and games.
\newblock {\em Numerical Algebra, Control and Optimization}, 14:435--525, 2024.

\bibitem{hure2021deep}
C{\^o}me Hur{\'e}, Huy{\^e}n Pham, Achref Bachouch, and Nicolas Langren{\'e}.
\newblock Deep neural networks algorithms for stochastic control problems on finite horizon: convergence analysis.
\newblock {\em SIAM Journal on Numerical Analysis}, 59(1):525--557, 2021.

\bibitem{hure2020deep}
C{\^o}me Hur{\'e}, Huy{\^e}n Pham, and Xavier Warin.
\newblock Deep backward schemes for high-dimensional nonlinear {PDEs}.
\newblock {\em Mathematics of Computation}, 89(324):1547--1579, 2020.

\bibitem{ito2021neural}
Kazufumi Ito, Christoph Reisinger, and Yufei Zhang.
\newblock A neural network-based policy iteration algorithm with global ${H}^2$-superlinear convergence for stochastic games on domains.
\newblock {\em Foundations of Computational Mathematics}, 21(2):331--374, 2021.

\bibitem{zhou2022}
Yanwei Jia and Xunyu Zhou.
\newblock Policy gradient and actor-critic learning in continuous time and space: theory and algorithms.
\newblock {\em Journal of Machine Learning Research}, 23, 2022.

\bibitem{jiang2023global}
Deqing Jiang, Justin Sirignano, and Samuel~N Cohen.
\newblock {Global Convergence of Deep Galerkin and PINNs Methods for Solving Partial Differential Equations}.
\newblock {\em arXiv preprint arXiv:2305.06000}, 2023.

\bibitem{jiao2024}
Zhe Jiao, Xiaoyan Luo, and Xinlei Yi.
\newblock Neural optimal controller for stochastic systems via pathwise {HJB} operator.
\newblock {\em arXiv preprint arXiv:2402.15592}, 2024.

\bibitem{karatzas1998}
Ioannis Karatzas and Steven Shreve.
\newblock {\em Methods of Mathematical Finance}.
\newblock Springer, 1998.

\bibitem{kolokoltsov2019differential}
Vassili Kolokoltsov.
\newblock {\em Differential equations on measures and functional spaces}.
\newblock Birkh\"{a}user Cham, 2019.

\bibitem{konda1999actor}
Vijay Konda and John Tsitsiklis.
\newblock Actor-critic algorithms.
\newblock {\em Advances in Neural Information Processing Systems}, 12, 1999.

\bibitem{krylov1980}
Nicolai Krylov.
\newblock {\em Controlled Diffusion Processes}.
\newblock Springer Berlin, 1980.

\bibitem{mcfall2009artificial}
Kevin~Stanley McFall and James~Robert Mahan.
\newblock Artificial neural network method for solution of boundary value problems with exact satisfaction of arbitrary boundary conditions.
\newblock {\em IEEE Transactions on Neural Networks}, 20(8):1221--1233, 2009.

\bibitem{nusken2022}
Nikolas N\"{u}sken and Lorenz Richter.
\newblock {Solving high-dimensional Hamilton–Jacobi-Bellman PDEs using neural networks: perspectives from the theory of controlled diffusions and measures on path space}.
\newblock {\em Partial Differential Equations and Applications}, 2, 2021.

\bibitem{pham2009continuous}
Huy{\^e}n Pham.
\newblock {\em Continuous-time stochastic control and optimization with financial applications}, volume~61.
\newblock Springer Science \& Business Media, 2009.

\bibitem{pham2022mean}
Huy{\^e}n Pham and Xavier Warin.
\newblock Mean-field neural networks-based algorithms for {McKean-Vlasov} control problems.
\newblock {\em arXiv preprint arXiv:2212.11518}, 2022.

\bibitem{raissi2019physics}
Maziar Raissi, Paris Perdikaris, and George~E Karniadakis.
\newblock Physics-informed neural networks: A deep learning framework for solving forward and inverse problems involving nonlinear partial differential equations.
\newblock {\em Journal of Computational physics}, 378:686--707, 2019.

\bibitem{zhang2021}
Christoph Reisinger and Yufei Zhang.
\newblock Regularity and stability of feedback relaxed controls.
\newblock {\em SIAM Journal on Control and Optimization}, 59(5):3118--3151, 2021.

\bibitem{Schied28092010}
Alexander Schied, Torsten Schöneborn, and Michael Tehranchi.
\newblock Optimal basket liquidation for cara investors is deterministic.
\newblock {\em Applied Mathematical Finance}, 17(6):471--489, 2010.

\bibitem{sirignano2018dgm}
Justin Sirignano and Konstantinos Spiliopoulos.
\newblock {DGM}: A deep learning algorithm for solving partial differential equations.
\newblock {\em Journal of Computational Physics}, 375:1339--1364, 2018.

\bibitem{sutton2018reinforcement}
Richard Sutton and Andrew Barto.
\newblock {\em Reinforcement learning: An introduction}.
\newblock MIT Press, 2018.

\bibitem{yong_stochastic_1999}
Jiongmin Yong and Xunyu Zhou.
\newblock {\em Stochastic controls: {Hamiltonian} systems and {HJB} equations}.
\newblock Number~43 in Applications of Mathematics. Springer Science \& Business Media, New York, 1999.

\bibitem{Zhang2020Why}
Jingzhao Zhang, Tianxing He, Suvrit Sra, and Ali Jadbabaie.
\newblock Why gradient clipping accelerates training: A theoretical justification for adaptivity.
\newblock In {\em International Conference on Learning Representations}, 2020.

\bibitem{zhou2021actor}
Mo~Zhou, Jiequn Han, and Jianfeng Lu.
\newblock Actor-critic method for high dimensional static {Hamilton--Jacobi--Bellman} partial differential equations based on neural networks.
\newblock {\em SIAM Journal on Scientific Computing}, 43(6):A4043--A4066, 2021.

\bibitem{zhou2025}
Mo~Zhou and Jianfeng Lu.
\newblock A policy gradient framework for stochastic optimal control problems with global convergence guarantee.
\newblock {\em arXiv preprint arXiv:2302.05816}, 2025.

\end{thebibliography}
\bibliographystyle{plain}

\newpage

\appendix
\section{Measuring actor-critic agreement with Monte Carlo}

One might be interested in finding a measure of how closely the estimated value function $Q_\phi^{N^*}$ corresponds with the control given by the actor $U_\theta^N =: u$. Thankfully, this is easy to check. Since we have access to a simulation environment and a candidate actor, we may use an Euler--Maruyama scheme to simulate paths with the dynamics 
\begin{align*}
    dX^{u}_t = b(X_t, u(X_t^{u}))dt + \Phi(X_t, u(X_t^{u}))dW_t, \quad X^{u}_0 = x
\end{align*}
and numerically estimate
\begin{align*}
    V^u(x) = \E\brac{\int_0^{\tau_u} c(X_s^u, u_s) e^{-\gamma s} ds + g(X_{\tau_u}^u)e^{-\gamma\tau_u} \Big\rvert X_0^u = x}.
\end{align*}
The hope is to have $$V(x) \approx V^u(x) \approx Q^{N^*}_\phi(x), \quad\quad \text{for each $x \in \Omega$}.$$
To evaluate whether this is the case, for the above constructed problems, we select 1000 random points in the domain and calculate $$V(x) - V^u(x), \quad V(x) - Q_\phi^{N^*}(x), \quad \text{and} \quad Q_\phi^N(x) - V^u(x),$$
where each $V^u(x)$ is estimated from 2000 sample paths. We then provide a histogram for each of these metrics and calculate their mean square values. That is to say, we give a visualization of the distribution of each error metric and estimate
\begin{align*}
E_1 &:= \frac{1}{\mu(\Omega)}\int_\Omega \paren{V(x) - V^u(x)}^2 d\mu(x) \\
E_2 &:= \frac{1}{\mu(\Omega)}\int_\Omega \paren{V(x) - Q_\phi^{N^*}(x)}^2 d\mu(x) \\
E_3 &:= \frac{1}{\mu(\Omega)}\int_\Omega \paren{Q_\phi^N(x) - V^u(x)}^2 d\mu(x).
\end{align*}
Note that the figures involving Monte Carlo estimates for $V^u(x)$ are effected by our choice of time increment in the Euler-Maruyama scheme. Here, we use $\Delta t = 0.001$. This is low enough that the introduced discretization error should be negligible, though it is of course inferior to possessing the exact stochastic integrals in question, and we would expect any discretization error to add cost on average.

Table \ref{AC_disagreement_means} shows $E_1$, $E_2$, and $E_3$ for each of the six problems considered previously. Figures \ref{problem_1_AC} -- \ref{problem_5_AC} display histograms of the (signed) differences between the value function, critic, and Monte Carlo estimated value using the actor. Overall, the critic and the Monte Carlo estimated value function seem to match well. There are a few oddities in the data. For example, the between the true value function and the Monte Carlo estimate of the value function using the actor is greater for Problem 2B than for Problem 2A ($\zeta^*)$ despite the former having a far lower actor mean square error. This is likely because the diffusion coefficient in the latter is larger than the former and so forces the simulated particles to the boundary faster, giving less time to error to accumulate.

\begin{table}[]\
\centering
\begin{tabular}{@{}lllll@{}}
\toprule
Metric                & $E_1$ (True -- MC) & $E_2$ (True -- Critic) & $E_3$ (Critic -- MC) \\ \midrule
Problem 1 & $2.63 \times 10^{-4}$  & $3.44 \times 10^{-4}$  & $9.18 \times 10^{-5}$ \\
Problem 2A ($\zeta$) & $1.54 \times 10^{-2}$ & $5.23 \times 10^{-4}$ & $1.33 \times 10^{-2}$ \\
Problem 2A ($\zeta^*$) & $7.45 \times 10^{-3}$ & $2.59\times 10^{-4}$ & $9.25 \times 10^{-3}$ \\
Problem 2B & $1.43 \times 10^{-2}$ & $1.91 \times 10^{-4}$ & $1.46 \times 10^{-2}$ & \\
Problem 3 (Mod. Ham.) & $1.55 \times 10^{-2}$ & $5.46 \times 10^{-3}$ & $2.74 \times 10^{-2}$ \\
Problem 4 & $2.85 \times 10^{-2}$ & $1.24 \times 10^{-2}$ & $4.85 \times 10^{-3}$ \\
Problem 5 & $1.20 \times 10^{-5}$ & $6.05 \times 10^{-4}$ & $5.46 \times 10^{-4}$
\\ \bottomrule
\end{tabular}
\caption{Actor-critic disagreement square means for each problem}
\label{AC_disagreement_means}
\end{table}

\begin{figure}
\centering
\begin{subfigure}{0.325\textwidth}
   \includegraphics[width=1\linewidth]{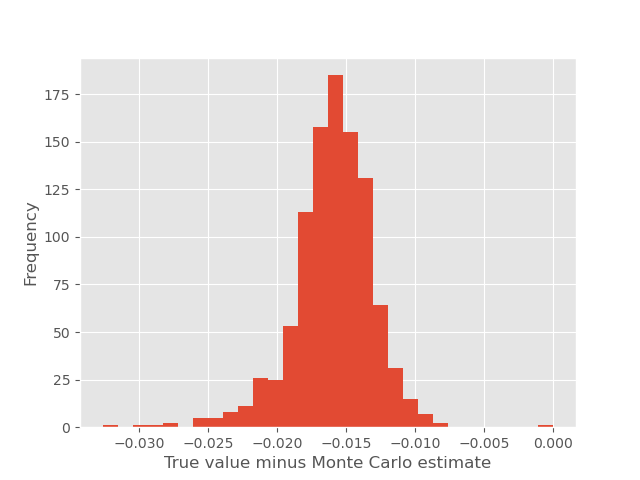}
\end{subfigure}
\begin{subfigure}{0.325\textwidth}
   \includegraphics[width=1\linewidth]{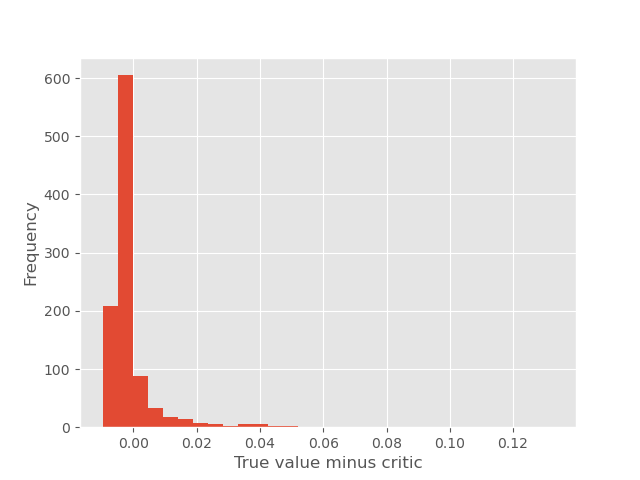}
\end{subfigure}
\begin{subfigure}{0.325\textwidth}
   \includegraphics[width=1\linewidth]{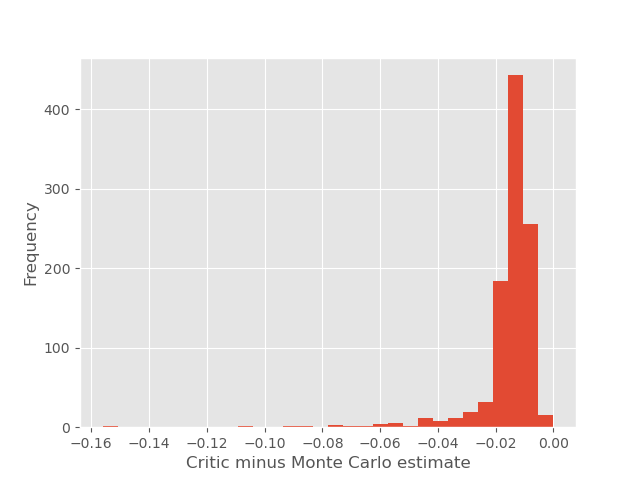}
\end{subfigure}
\caption{Actor-critic disagreement metrics for Problem 1}
\label{problem_1_AC}
\end{figure}

\begin{figure}
\centering
\begin{subfigure}{0.325\textwidth}
   \includegraphics[width=1\linewidth]{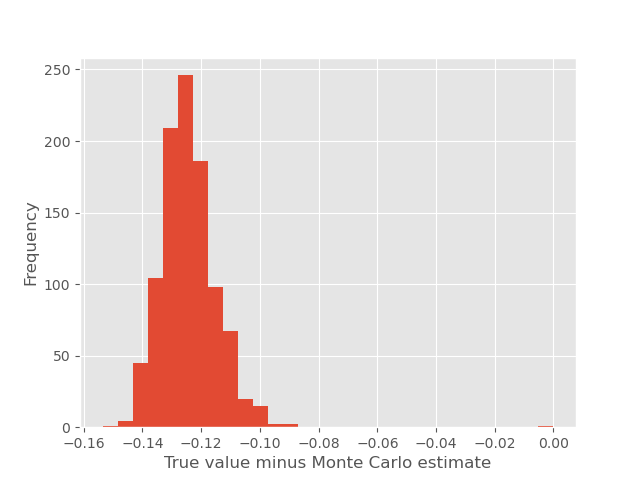}
\end{subfigure}
\begin{subfigure}{0.325\textwidth}
   \includegraphics[width=1\linewidth]{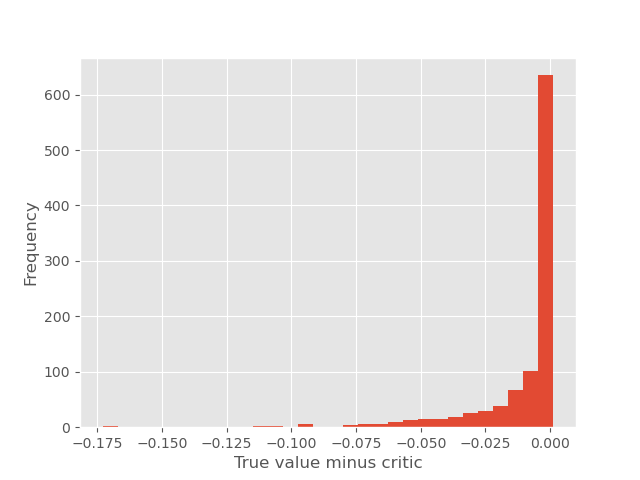}
\end{subfigure}
\begin{subfigure}{0.325\textwidth}
   \includegraphics[width=1\linewidth]{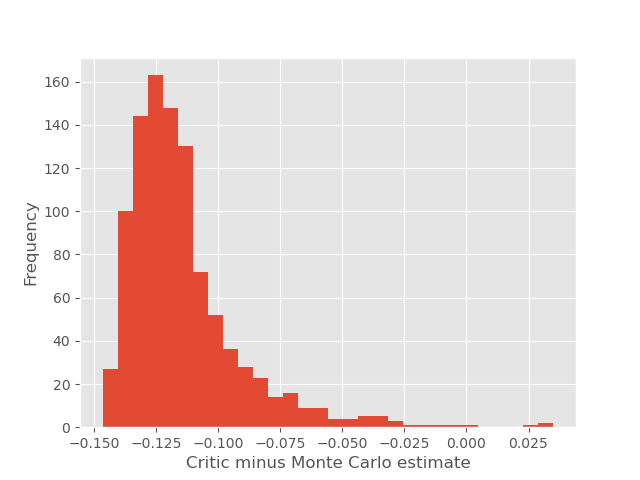}
\end{subfigure}
\caption{Actor-critic disagreement metrics for Problem 2A ($\zeta$)}
\label{problem_2Az_AC}
\end{figure}

\begin{figure}
\centering
\begin{subfigure}{0.325\textwidth}
   \includegraphics[width=1\linewidth]{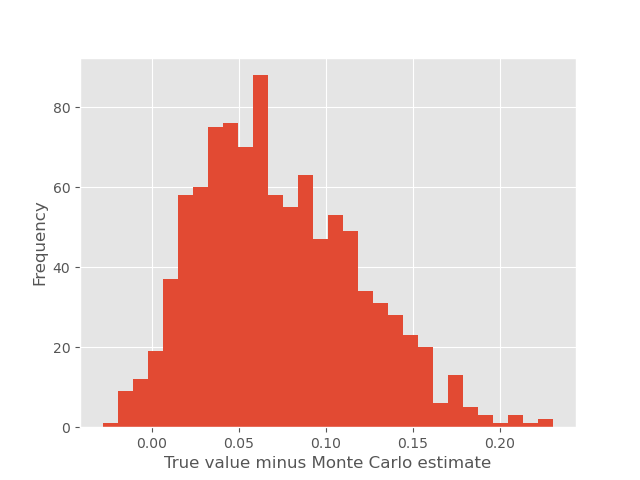}
\end{subfigure}
\begin{subfigure}{0.325\textwidth}
   \includegraphics[width=1\linewidth]{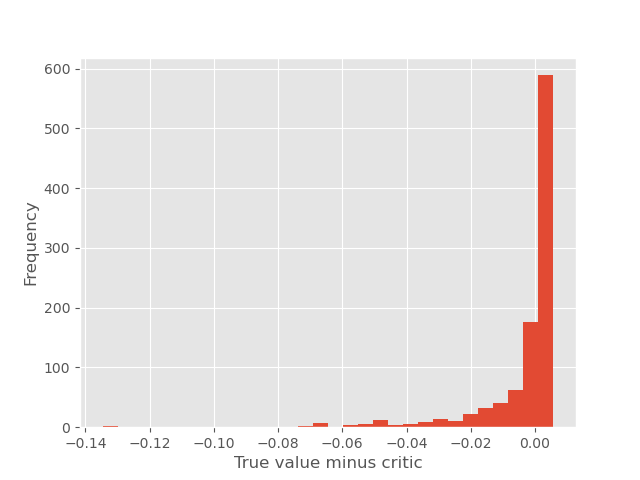}
\end{subfigure}
\begin{subfigure}{0.325\textwidth}
   \includegraphics[width=1\linewidth]{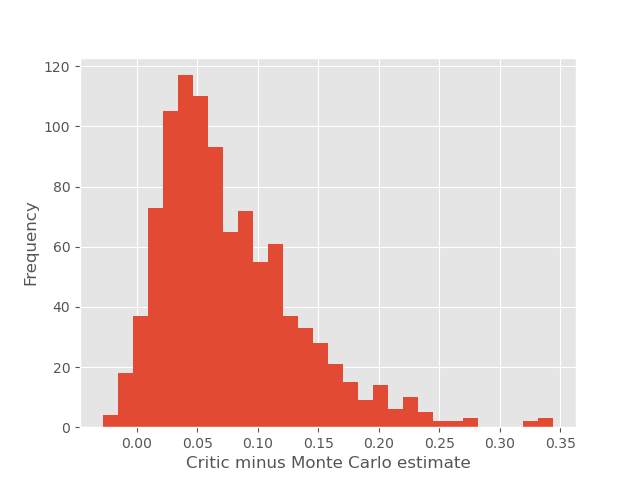}
\end{subfigure}
\caption{Actor-critic disagreement metrics for Problem 2A ($\zeta^*$)}
\label{problem_2A_AC}
\end{figure}

\begin{figure}
\centering
\begin{subfigure}{0.325\textwidth}
   \includegraphics[width=1\linewidth]{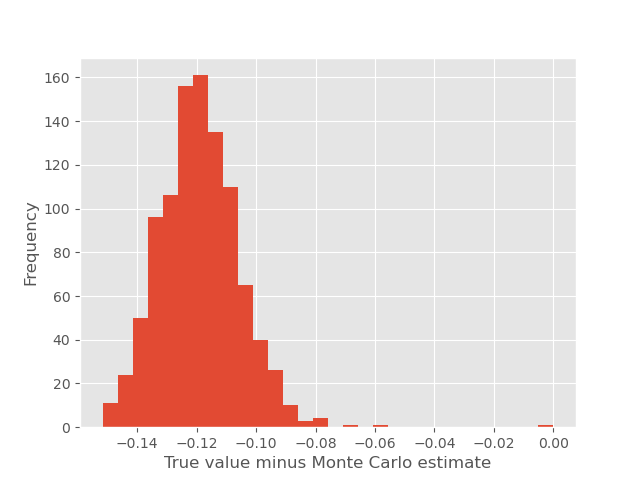}
\end{subfigure}
\begin{subfigure}{0.325\textwidth}
   \includegraphics[width=1\linewidth]{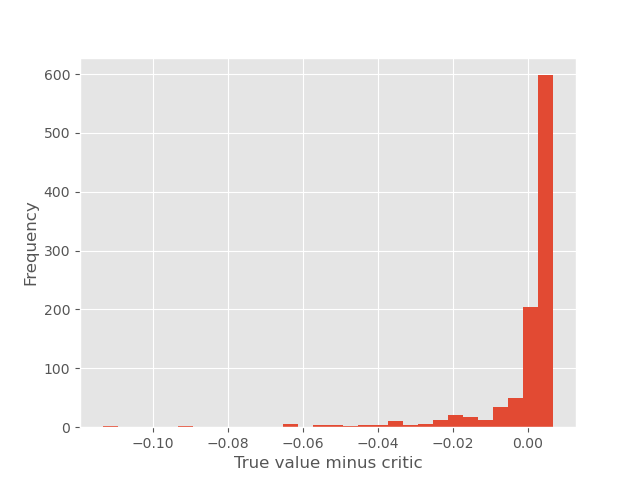}
\end{subfigure}
\begin{subfigure}{0.325\textwidth}
   \includegraphics[width=1\linewidth]{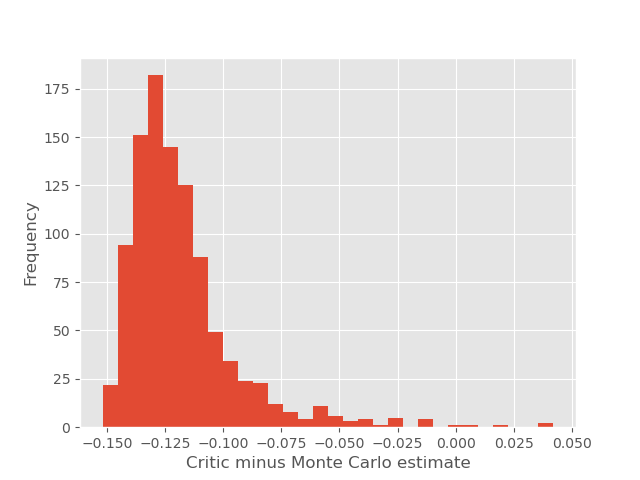}
\end{subfigure}
\caption{Actor-critic disagreement metrics for Problem 2B}
\label{problem_2B_AC}
\end{figure}

\begin{figure}
\centering
\begin{subfigure}{0.325\textwidth}
   \includegraphics[width=1\linewidth]{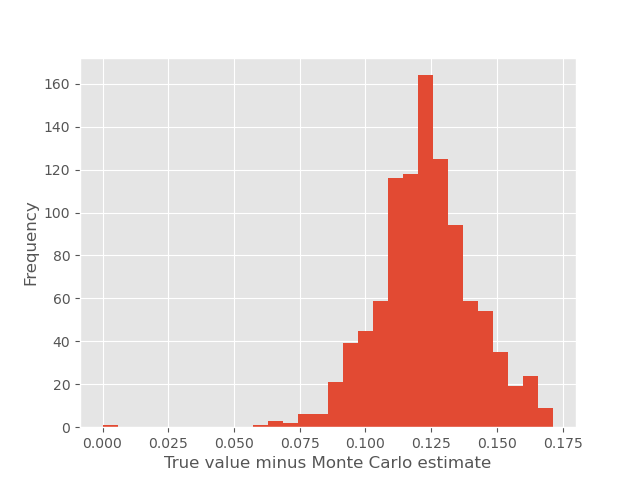}
\end{subfigure}
\begin{subfigure}{0.325\textwidth}
   \includegraphics[width=1\linewidth]{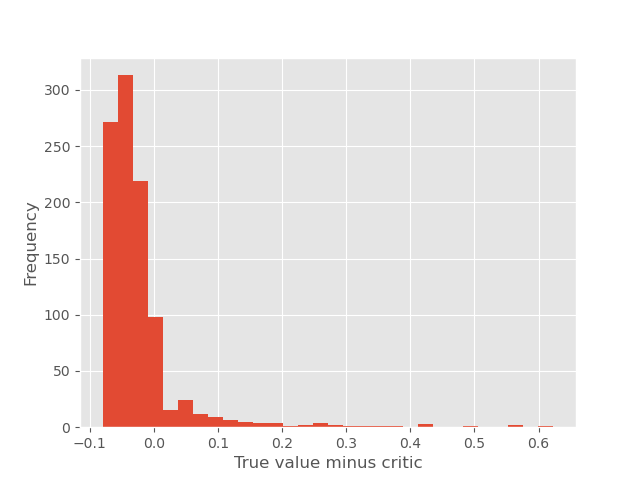}
\end{subfigure}
\begin{subfigure}{0.325\textwidth}
   \includegraphics[width=1\linewidth]{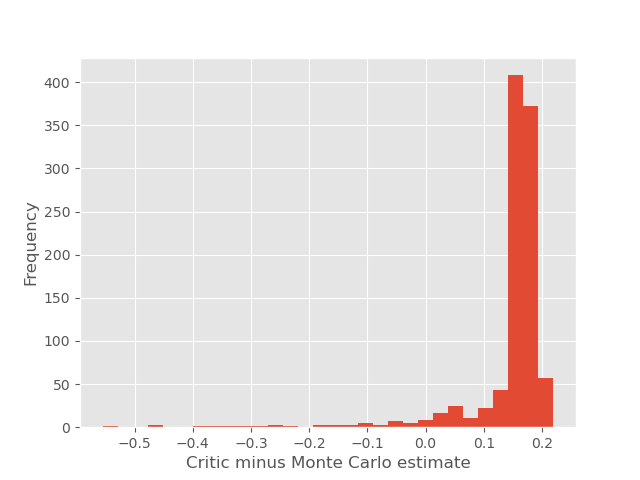}
\end{subfigure}
\caption{Actor-critic disagreement metrics for Problem 3}
\label{problem_3_AC}
\end{figure}

\begin{figure}
\centering
\begin{subfigure}{0.325\textwidth}
   \includegraphics[width=1\linewidth]{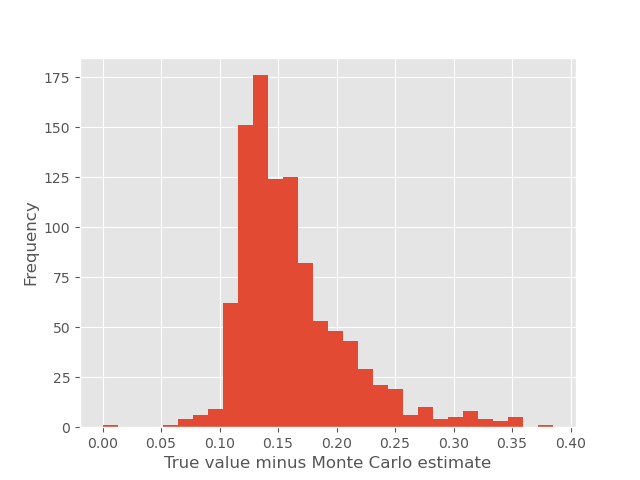}
\end{subfigure}
\begin{subfigure}{0.325\textwidth}
   \includegraphics[width=1\linewidth]{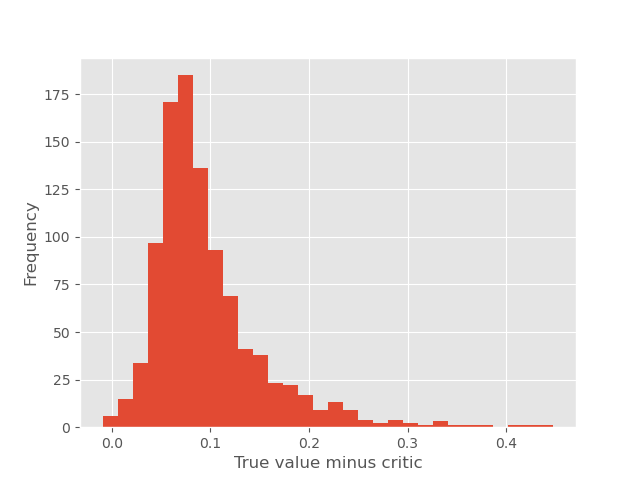}
\end{subfigure}
\begin{subfigure}{0.325\textwidth}
   \includegraphics[width=1\linewidth]{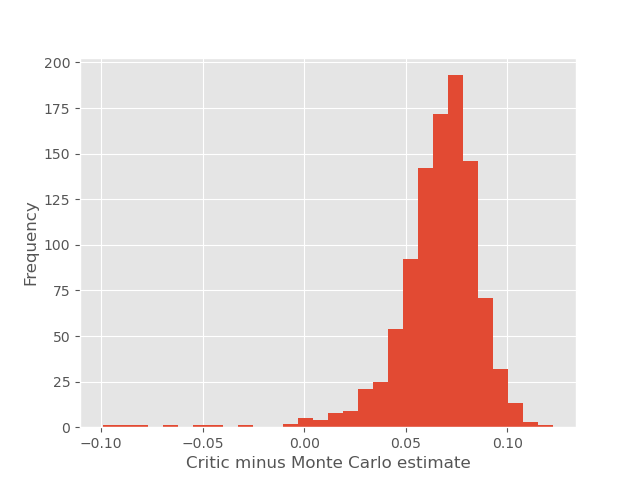}
\end{subfigure}
\caption{Actor-critic disagreement metrics for Problem 4}
\label{problem_4_AC}
\end{figure}

\begin{figure}
\centering
\begin{subfigure}{0.325\textwidth}
   \includegraphics[width=1\linewidth]{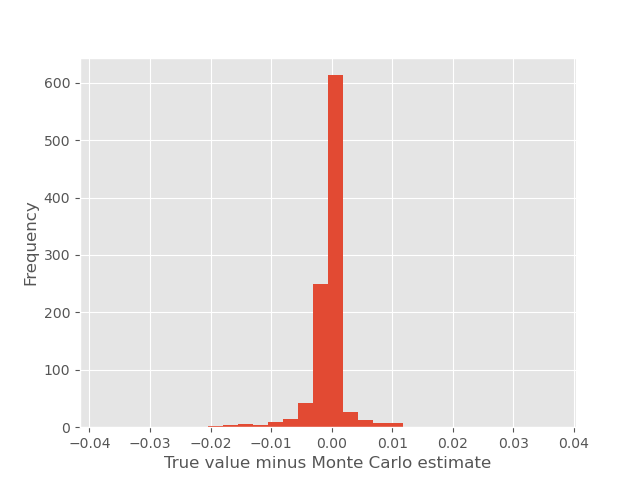}
\end{subfigure}
\begin{subfigure}{0.325\textwidth}
   \includegraphics[width=1\linewidth]{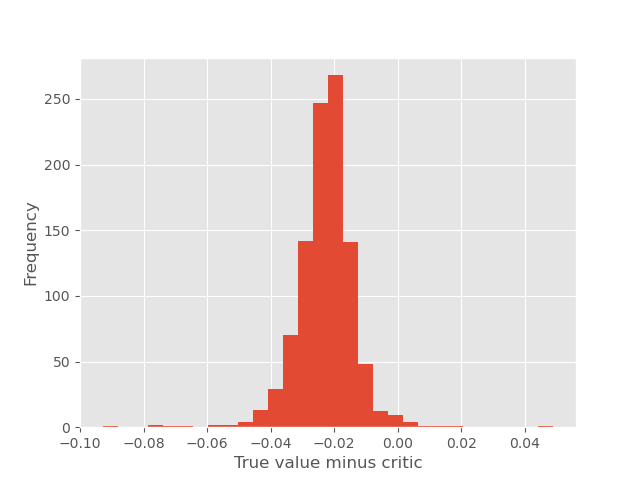}
\end{subfigure}
\begin{subfigure}{0.325\textwidth}
   \includegraphics[width=1\linewidth]{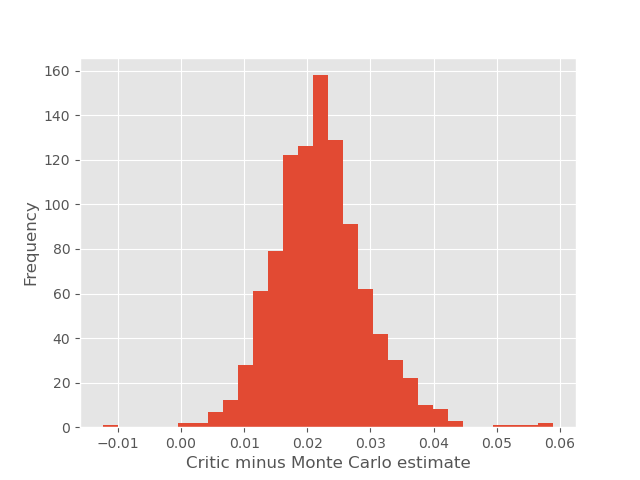}
\end{subfigure}
\caption{Actor-critic disagreement metrics for Problem 5}
\label{problem_5_AC}
\end{figure}
\end{document}